\newcommand\ocirc[1]{\ensurestackMath{\stackon[1pt]{#1}{\mkern2mu\circ}}}
\DeclareMathOperator{\regkap}{\mathcal R^\varrho}
\newcommand{\Testzeta}{\mathscr{F}_{\eta}}
\newcommand{\Testetar}{\mathscr{F}_{\eta^\varrho}}
\newtheorem{theorem}{Theorem}[section]
\newtheorem{lemma}[theorem]{Lemma}
\newtheorem{corollary}[theorem]{Corollary}
\theoremstyle{definition}
\newtheorem{definition}[theorem]{Definition}
\newtheorem{example}[theorem]{Example}
\theoremstyle{remark}
\newtheorem{remark}[theorem]{Remark}
\DeclareMathOperator*{\tr}{tr}
\numberwithin{equation}{section}
\newcommand{\bx}{\mathbf{x}}
\newcommand{\by}{\mathbf{y}}
\newcommand{\bz}{\mathbf{z}}
\newcommand{\bq}{\mathbf{q}}
\newcommand{\bu}{\mathbf{u}}
\newcommand{\bfu}{\mathbf{u}}
\newcommand{\bfv}{\mathbf{v}}
\newcommand{\bff}{\mathbf{f}}
\newcommand{\vu}{\mathbf{u}}
\newcommand{\bv}{\mathbf{v}}
\newcommand{\bn}{\mathbf{n}}
\newcommand{\dy}{\, \mathrm{d}\mathbf{y}}
\newcommand{\dd}{\,\mathrm{d}}
\newcommand{\dq}{\, \mathrm{d} \mathbf{q}}
\newcommand{\dx}{\, \mathrm{d} \mathbf{x}}
\newcommand{\dxs}{\, \mathrm{d} \mathbf{x}\, \mathrm{d}\sigma}
\newcommand{\dt}{\, \mathrm{d}t}
\newcommand{\ds}{\, \mathrm{d}\sigma}
\newcommand{\dxt}{\,\mathrm{d}\mathbf{x}\, \mathrm{d}t}
\newcommand{\Div}{\mathrm{div}_{\mathbf{x}}}
\newcommand{\divx}{\mathrm{div}_{\mathbf{x}}}
\newcommand{\divqi}{\mathrm{div}_{\mathbf{q}_i}}
\newcommand{\nabx}{\nabla_{\mathbf{x}}}
\newcommand{\naby}{\nabla_{\mathbf{y}}}
\newcommand{\nabq}{\nabla_{\mathbf{q}}}
\newcommand{\nabqi}{\nabla_{\mathbf{q}_i}}
\newcommand{\nabqj}{\nabla_{\mathbf{q}_j}}
\newcommand{\Delx}{\Delta_{\mathbf{x}}}
\newcommand{\R}{\mathbb{R}}
\newcommand{\Oeta}{\Omega_{\eta(t)}}
\begin{document}

\title[An incompressible polymer fluid interacting with a Koiter shell]
{An incompressible polymer fluid interacting with a Koiter shell.}

\author{Dominic Breit}
\address{Department of Mathematics, Heriot-Watt University, Edinburgh, EH14 4AS, United Kingdom}
\email{d.breit@hw.ac.uk}

\thanks{The authors would like to thank S. Schwarzacher and E. S\"uli for valuable suggestions.}

\author{Prince Romeo Mensah}
\address{Department of Mathematics, Imperial College, London SW7 2AZ, United Kingdom
\&
Gran Sasso Science Institute, Viale F. Crispi, 7. 67100 L'Aquila, Italy}
\email{p.mensah@imperial.ac.uk; romeo.mensah@gssi.it}

\subjclass[2010]{76Nxx; 76N10; 35Q30 ; 35Q84; 82D60}

\date{\today}


\keywords{Incompressible Navier--Stokes--Fokker--Planck system, FENE model, Fluid-Structure interaction, Koiter shell}

\begin{abstract}
We study a mutually coupled mesoscopic-macroscopic-shell system of equations modeling a dilute incompressible polymer fluid which is evolving and interacting with a flexible shell of Koiter type. The polymer constitutes a solvent-solute mixture where the solvent is modelled on the macroscopic scale by the incompressible Navier--Stokes equation and the solute is modelled on the mesoscopic scale by a Fokker--Planck equation (Kolmogorov forward equation) for the probability density function of the bead-spring polymer chain configuration. This mixture interacts with a nonlinear elastic shell which serves as a moving boundary of the physical spatial domain of the polymer fluid.
We use the classical model by Koiter to describe the shell movement which yields a fully nonlinear fourth order hyperbolic equation. Our main result is the 
existence of a weak solution to the underlying system
 which exists until the Koiter energy degenerates or the flexible shell approaches a self-intersection.
\end{abstract}

\maketitle

\section{Introduction}
\label{sec:intro}
\noindent
On the one hand, fluid-structure interactions are common physical phenomena yet mathematically challenging problems with applications in aeroelasticity \cite{dowell2015modern},  biomechanics \cite{fluid2014bodnar} and hydrodynamics \cite{chakrabarti2002theory} amongst others. On the other hand, the huge industrial application of the interactions between polymer molecules and fluids such as in the production of paints, lubricants, plastics as well as in the processing of food stuff \cite{bird1987dynamics}, makes the analysis of polymeric fluids very important. Therefore, from a mathematical, physical and commercial point-of-view, the analysis of the mutual interaction of all three elements, i.e. fluid, structure and polymer molecules is crucial. 
\\
We consider in this work, the evolution of a dilute three-dimensional incompressible polymeric fluid in a spatial domain that is changing with respect to time. The displacement of the boundary is prescribed via the two-dimensional mid-section of the flexible Koiter shell
whose energy is a nonlinear function of the first and second fundamental
forms of the moving boundary. We prove the existence of a weak solution to the coupled fluid-kinetic system, given by the incompressible Navier--Stokes--Fokker--Planck sytem, which is interacting with an elastic Koiter shell. The existence time  is only restricted once the shell approaches a self-intersection.
\\
Existence of a solution to the Fokker--Planck equation for a given solenoidal velocity field incorporating the center-of-mass diffusion term has been established by El-Kareh and Leal \cite{el1989existence} independently of the Deborah number. 
The incompressible Navier--Stokes--Fokker--Planck system  for polymeric fluids including center-of-mass diffusion  has been studied considerably. See for example, the works by Barrett, Schwab \& S\"uli \cite{barrett2005existence}, Barrett \& S\"uli \cite{ barrett2007existence, barrett2008existence, barrettSuli2011existence, barrett2012existenceMMMAS, barrett2012existenceJDE},
as well as by 
Gwiazda, Luk\'{a}\v{c}ov\'{a}-Medvidov\'{a}, Mizerov\'{a} \&  \'{S}wierczewska-Gwiazda \cite{gwiazda2018existence}
and 
Luk\'{a}\v{c}ov\'{a}-Medvi\v{d}ov\'{a}, Mizerov\'{a}, Ne\v{c}asov\'{a} \& Renardy \cite{lukacova2017global}
 for the kinetic Peterlin model  with
a nonlinear spring law for an infinitely extensible spring. 
All these results derive global-in-time weak solutions for variations of the incompressible Navier--Stokes equation coupled with the Fokker--Planck equation.
On the other hand, a unique local-in-time strong solution for the center-of-mass system was first shown to exist by Renardy \cite{renardy1991existence}. Unfortunately, \cite{renardy1991existence} excludes the physically relevant FENE dumbbell models. The local theory was then revisited by Jourdain, Leli\`evre \& Le Bris \cite{jourdain2004existence} for the stochastic FENE model for the simple Couette flow
and by E, Li \& Zhang \cite{e2004well} who analysed the  incompressible Navier--Stokes equation coupled with a system of SDEs describing the configuration of the spring. The corresponding deterministic system, where instead the incompressible Navier--Stokes equations are coupled with the Fokker--Planck equation, was studied by Li, Zhang \& Zhang \cite{li2004local} and Zhang \& Zhang \cite{zhang2006local}. Constantin proved the existence of Lyapunov functionals and smooth solutions in \cite{constantin2005nonlinear} and then derived global-in-time strong solution for the 2-D system in \cite{constantin2007regularity} together with Fefferman, Titi \& Zarnescu. \\
The analysis is significantly harder without center-of-mass diffusion since the Fokker--Planck equation becomes a degenerate parabolic equation which behaves like an hyperbolic equation in the space-time variable.  A global weak solution result to the incompressible Navier--Stokes--Fokker--Planck system for the FENE dumbbell model without center-of-mass diffusion
was recently  achieved in the seminal paper \cite{masmoudi2013global} by Masmoudi.  The main difficulty is to pass to the limit in the drag term of the Fokker--Planck equation which does not have any obvious compactness properties.
Earlier global weak solution results without center-of-mass diffusion include the work by Lions \& Masmoudi \cite{lions2000global} for Oldroyd models,  Lions \& Masmoudi \cite{ lions2007global} who studied the corotational case, and Otto \& Tzavaras \cite{otto2008continuity} who studied weak solutions for the stationary system.
Masmoudi \cite{masmoudi2008well} also constructed a local-in-time strong solution to the incompressible Navier--Stokes--Fokker--Planck system for the FENE dumbbell model without center-of-mass diffusion
in \cite{masmoudi2008well}. Furthermore, the solution is global near equilibrium, see also  Kreml \& Pokorn\'y \cite{klainerman1981singular}. Further results on local strong solutions where proved by  Luo \& Yin \cite{luo2017global} and Breit \& Mensah \cite{breit2019local}.
\\
With respect to fluid-structure problems, the analysis of weak solutions to incompressible viscous fluids interacting with lower-dimensional linear elastodynamic equations has been studied by Chambolle, Desjardins, Esteban and Grandmont in \cite{chambolle2005existence}, by Grandmont \cite{grandmont2008existence}, 
Hundertmark-Zau\v{s}kov\'{a}, Luk\'{a}\v{c}ov\'{a}-Medvi\v{d}ov\'{a} \&
  Ne\v{c}asov\'{a}
\cite{hundertmark2016existence}, Lengeler \& R\ocirc{u}\v{z}i\v{c}ka \cite{lengeler2014weak} and by Muha and \v{C}ani\'{c} in \cite{muha2014existence, muha2013existence}, just to list a few. 
In particular, the existence of a weak solution for the  three-dimensional viscous incompressible fluid modelled by the Navier--Stokes equations which is interacting with a flexible elastic plate located on one
part of the fluid boundary was shown by Chambolle et al in \cite{chambolle2005existence}. This solution exists so long as the moving part of the structure does not touch the fixed part of the fluid boundary.
By using a singular limit argument, the existence of a weak solution to the incompressible Navier--Stokes equation coupled with a plate in flexion was constructed by Grandmont in \cite{grandmont2008existence} as the coefficient  modelling the viscoelasticity of the plate tends to zero.
In \cite{hundertmark2016existence}, Hundertmark-Zau\v{s}kov\'{a} et al studied the existence of a weak solution to a power-law viscosity fluid-structure interaction problem for shear-thickening flows. Again, the solution  exists
until a contact of the elastic boundary with a fixed boundary part is made. Lengeler \& R\ocirc{u}\v{z}i\v{c}ka also studied in \cite{lengeler2014weak},  the interaction of an incompressible Newtonian fluid, modelled by the Navier--Stoke equation, with a linear elastic shell of Koiter. Here, the middle surface of the shell serves as the mathematical boundary of the three-dimensional fluid domain. The weak solution is shown to exist so long as the magnitude of the shell's displacement stays below a bound that rules out self-intersection. In \cite{muha2013existence} however, Muha and \v{C}ani\'{c} use a semi-discrete, operator splitting numerical scheme to show the existence of weak solution of a fluid-structure coupled system governed by the two-dimensional incompressible Navier--Stokes equations, while the elastodynamics of the cylindrical wall is modelled by the one-dimensional cylindrical linear Koiter shell model. The solution exists as long as the cylinder radius is greater than zero.
A similar existence result as \cite{muha2013existence} was shown in \cite{muha2014existence} by the same authors where now, the elastodynamics of the cylinder wall is governed by the one-dimensional linear wave equation modelling the thin structural layer, and by the two-dimension equations of linear elasticity modelling the thick structural layer.
Further fluid-structure interaction results includes the work \cite{ignatova2017small} by Ignatova, Kukavica, Lasiecka \& Tuffaha where they construct a small data global solution for the motion of an elastic body inside an incompressible fluid.
Boulakia, Guerrero \& Takahashi
\cite{boulakia2019well} also considers the situation where the
elastic structure is immersed in the fluid and the whole system is confined into a general three-dimensional  bounded smooth domain.
Well-posedness and stability results for a fluid-structure interaction model with interior damping and delay in the structure is studied by Peralta \& Kunisch \cite{peralta2019analysis}.
As far as we know, the only result on the analysis of weak solutions to fluid-structure interaction, where the original Koiter model (to be described below in Section \ref{sec:koiterEnergy}) with a leading order nonlinear shell energy is considered, is the recent paper \cite{muha2019existence} by Muha \& Schwarzacher.
\\
Mathematical results concerning the interaction of a polymeric fluid with a flexible structure are, however, still missing in the literature. In this article, we aim to close this gap and initiate a corresponding analysis. In the following, we will describe the model in detail.

\subsection{Elastic shell}
\label{subsec:shell}
We are interested in the mathematical analysis of a polymer fluid evolving in a spatial domain with a moving shell. For this reason, we first describe this spatial geometry before we state the equations of motion. Following \cite{lee2013Introduction}, we let $\Omega\subset \mathbb{R}^3$ be an open, bounded, nonempty and connected reference domain  with an elastic shell $\omega \times (-\epsilon_0,\epsilon_0) \subset \mathbb{R}^3$  of thickness $2 \epsilon_0>0$ and a  \textit{middle surface}  $\omega$. 
Now  denote the first fundamental form (or metric tensor) and second fundamental form (or curvature tensor) of  $\partial \Omega$ induced by the ambient Euclidean space and by the surface measure $\dy$ of $\partial \Omega$ by $\mathbb{A}$ and $\mathbb{B}$ respectively. 
Assume that the movement of the shell $\partial \Omega$ is in the direction of the outer unit normal $\bm{\nu}$ (we shall give a precise construction of this normal vector later in Section \ref{sec:koiterEnergy}).
Now denote the normal bundle of $\partial \Omega $ by
\begin{align*}
N(\partial\Omega) = \big\{ (\by,\bz)\in \R^{3\times 3} \,   :\, \by \in \partial\Omega,\quad \bz \in N_\by(\partial\Omega) \big\},
\end{align*}
where $N_\by(\partial\Omega)$ is the $(3-2)$-dimensional normal space to $\partial\Omega$ at $\by$ consisting of all vectors orthogonal to the tangent space $T_\by(\partial\Omega)$ with respect to the Euclidean dot product. Simply put, $N(\partial \Omega)$ consists of all vectors normal to $\partial \Omega$ and by \cite[Theorem 6.23]{lee2013Introduction}, $N(\partial \Omega)$ is an embedded $3$-dimensional submanifold of $\R^{3\times 3}$. Consequently, $\partial \Omega$ has a \textit{tubular neighbourhood}, $S_L:= \big\{\bx \in \R^3 \, :\, \mathrm{dist}(\bx, \partial\Omega) <L\big\}$ for some $L>0$, see Fig. \ref{fig.tubular}. 
\begin{figure}[h!]
  \vspace{-5pt}
  \centering
    \includegraphics[width=0.65\textwidth]{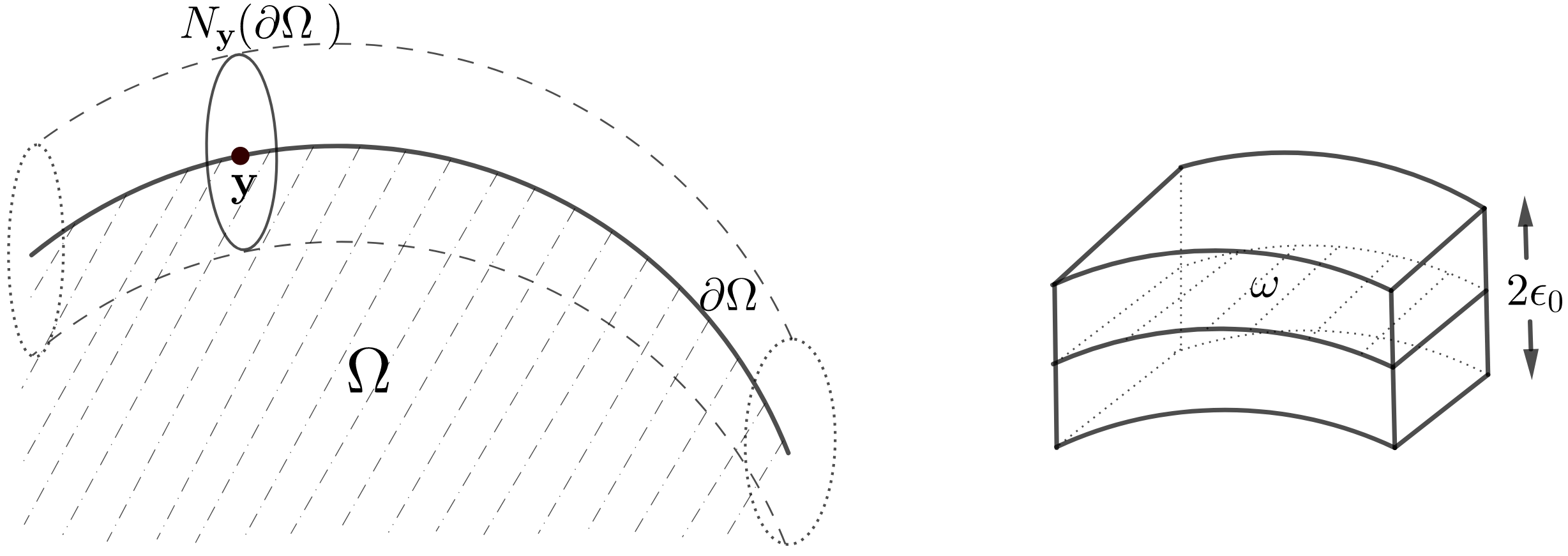}
      \vspace{-12pt}
  \caption{Left: A tubular neighbourhood of a shell $\partial \Omega$ is represented by the bended cylinder. Right: A macroscopic view of a tiny section of the shell $\partial \Omega$ with thickness $2\epsilon_0>0$. With an abuse of notation, we identify points $\by\in \partial\Omega$ on the $3$-d shell, with points on the middle surface $\by \in \omega$ which is a $2$-d submanifold.}
    \vspace{-5pt}
  \label{fig.tubular}
\end{figure}
Given the  outer unit normal $\bm{\nu}$ of $\partial \Omega$, one can construct a special affine mapping known as the  \textit{Hanzawa transform}, see \cite[Section 2]{lengeler2011globale}, which will be used below to relate the fixed domain $\Omega$ to a moving one. It is defined in terms of the mapping
\begin{align*}
\Lambda :\partial\Omega \times (-L,L)\rightarrow S_L, \quad  \Lambda(\by, s)=\by +s\bm{\nu}(\by).
\end{align*}
There is a maximal $L>0$ such that $\Lambda$
is a $C^3$-diffeomorphism with inverse
\begin{align}\label{eq:2212}
\Lambda^{-1} : S_L \rightarrow \partial\Omega \times (-L,L), \quad  \Lambda^{-1}(\bx)=(\by(\bx), s(\bx))
\end{align}
where $s(\bx) =(\bx -\by(\bx))\cdot \bm{\nu}(\by(\bx))$, cf. \cite[Theorem 6.24]{lee2013Introduction}.
A detailed construction of the Hanzawa transform can be found in \cite{lengeler2011globale} but for the sake of completeness, we summarize the construction below.\\
To begin with, for any specific instant of time $t\in \overline{I}:=\overline{(0,T)}$ where $T>0$, we consider a $C^3_\bx$-function $\eta(t,\cdot) : \partial \Omega \rightarrow (-L,L)$ and define the following open set
\begin{align*}
\Omega_{\eta(t)} := \Omega \setminus S_L \cup \big\{ \bx\in S_L : s(\bx) < \eta(t, \by(\bx)) \big\}.
\end{align*}
Let $\bm{\nu}_{\eta(t)}$  and $\dy_{\eta(t)}$ be the outer unit normal and the surface measure of $\partial \Omega_{\eta(t)}$ respectively. This function $\eta$ is further assumed to be continuous in time so that $\eta \in C(\overline{I} \times \partial\Omega)$ and we have that
\begin{align*}
\Vert \eta \Vert_{L^\infty(I \times \partial \Omega)}<L.
\end{align*} 
Moving on, we define a $C^3_\bx$-diffeomorphism $\bm{\Psi}_{\eta(t)} :\overline{\Omega} \rightarrow \overline{\Omega}_{\eta(t)}$ piecewise as
\begin{align}
&\bm{\Psi}_{\eta(t)} :\Omega \setminus S_L \rightarrow \overline{\Omega}_{\eta(t)}, \quad
&\bm{\Psi}_{\eta(t)}(\bx) &=\bx,\label{notdeform}
\\
&\bm{\Psi}_{\eta(t)} :\overline{\Omega} \cap S_L \rightarrow \overline{\Omega}_{\eta(t)}, \quad
&\bm{\Psi}_{\eta(t)}(\bx) &=\bx + \bm{\nu(\by(\bx))} \eta(t, \by(\bx)) \beta \big(s(\bx)/L \big) \nonumber
\\
&&&=\by(\bx) + \bm{\nu(\by(\bx))}\big[s(\bx) + \eta(t, \by(\bx)) \beta \big(s(\bx)/L \big) \big] \label{deform}
\end{align}
where $\beta \in C^\infty(\R)$ is a real-valued function which is zero in a neighbourhood of $-1$ and one in a neighbourhood of $0$. For the mapping $\bm{\Psi}_{\eta(t)}$ to have a continuously differentiable spatial inverse, we assume that $\vert \beta '(s) \vert < L/ \vert \eta(\by) \vert$ for all $s\in [-1,0]$ and all $\by \in \partial \Omega$. The boundary mapping is also a $C^3_\bx$-diffeomorphism defined as
\begin{align}
\label{deformBoundary}
\bm{\Phi}_{\eta(t)}:=\bm{\Psi}_{\eta(t)}\big\vert_{\partial \Omega} : \partial \Omega  \rightarrow \partial \Omega_{\eta(t)}, \quad
\bm{\Phi}_{\eta(t)}(\by) =\by + \bm{\nu(\by)} \eta(t,\by)
\end{align}
for every time $t\in \overline{I}$ with inverse $\bm{\Phi}_{\eta(t)}^{-1}(\bx) = \by(\bx)$.\\
The diffeomorphisms $\bm{\Phi}_{\eta}$ and $\bm{\Psi}_{\eta}$ constructed above, and thus the deformed shell $\overline{\Omega}_{\eta(t)}$, satisfies various continuity and embedding properties. A detailed analyses of these can be found in \cite{lengeler2011globale, lengeler2014weak}.\\
To summarize, if we denote the closure of the deformed spacetime cylinder $\cup_{t\in I} \{t\} \times\Omega_{\eta(t)} \subset\R^4$ by $\overline{I} \times\overline{\Omega}_{\eta(t)}$, then the mapping
\begin{align*}
\bm{\Psi}_{\eta} : \overline{I} \times\overline{\Omega} \rightarrow \overline{I} \times\overline{\Omega}_{\eta(t)}, \quad (t, \bx) \mapsto \big(t, \bm{\Psi}_{\eta(t)}(\bx) \big)
\end{align*}
preserves the portion of the original spacetime  cylinder $\overline{I} \times\overline{\Omega}$ that lies outside the tubular neigbourhood $S_L$ and deforms the residual portion of the original space-time  cylinder according to the mapping \eqref{deform}. The restriction of $\bm{\Psi}_{\eta}$ to the boundary is given by the mapping
\begin{align*}
\bm{\Phi}_{\eta} : \overline{I} \times \partial \Omega \rightarrow \overline{I} \times \partial\Omega_{\eta(t)}, \quad (t, \bx) \mapsto \big(t, \bm{\Phi}_{\eta(t)}(\bx) \big)
\end{align*}
according to the rule \eqref{deformBoundary}. 
\\
We now move on to give a precise description of the evolution of the shell and its associated energy below.
\subsection{Koiter shell energy and equation of motion}
\label{sec:koiterEnergy}
The polymer fluid we wish to model is assumed to interact with a Koiter shell $\omega \times (-\epsilon_0,\epsilon_0) \subset \mathbb{R}^3$.  Here,  $\omega \subset \R^2$ is the middle surface of the shell, recall Fig. \ref{fig.tubular}, and  $2 \epsilon_0>0$ is the thickness of $\partial \Omega$ and for simplicity, we take $\omega = \R^2\setminus \mathbb{Z}^2$ to be the flat torus. We emphasis that this periodic assumption on $\omega$ is not at all restrictive and everything we do subsequently can be replicated for a general $\omega$.
Following \cite{ ciarlet2001justification}, we suppose that $\partial\Omega$ can be parametrised by a smooth injective mapping $\bm{\varphi}:\omega\rightarrow \R^3$ such that for all points $\by=(y_1,y_2)\in \omega$, the pair of vectors  
$\partial_i \bm{\varphi}(\by)$, $i=1,2,$ are linearly independent where, $\partial_i :=\partial/\partial_{y_i}$. Simply put, $\bm{\varphi}$ is an injective map on the mid-section of the shell of the domain $\Omega$.
This vector pair $[\partial_1 \bm{\varphi}(\by), \partial_2 \bm{\varphi}(\by)]$ is the covariant basis of the tangent plane to the middle surface $\bm{\varphi}(\omega)$ of the reference configuration  at each point $\bm{\varphi}(\by)$ and
\begin{align*}
\bm{\nu}(\by)=\frac{\partial_1 \bm{\varphi}(\by) \times \partial_2 \bm{\varphi}(\by)}{\vert \partial_1 \bm{\varphi}(\by) \times \partial_2 \bm{\varphi}(\by) \vert}
\end{align*}
is a well-defined unit vector  normal to the surface $\bm{\varphi}(\omega)$ at $\bm{\varphi}(\by)$. The  area measure along the  surface $\bm{\varphi}(\omega)$ is $\mathrm{d}\by_{\bm{\nu}}:=\vert \partial_1 \bm{\varphi}(\by) \times \partial_2 \bm{\varphi}(\by) \vert \mathrm{d}\by$. We now assume that the shell (and in particular, its middle surface) only deforms along the normal direction with a displacement field $\eta \bm{\nu} : I \times \omega \rightarrow\R^3$ where $\eta : I \times \omega \rightarrow\R$ is considerably smooth. Then, we can parametrized the deformed boundary by the following coordinates
\begin{align*}
\bm{\varphi}_\eta(t, \by)=\bm{\varphi}(\by) + \eta(t, \by)\bm{\nu}(\by), \quad t\in I, \,\by \in \omega
\end{align*}
yielding the deformed middle surface 
$\bm{\varphi}_\eta(t, \omega)$. Now for
\begin{align*}
\partial_i \bm{\varphi}_\eta(t, \by)= \partial_i \bm{\varphi}( \by) +  \partial_i \eta(t, \by) \bm{\nu}(\by) +   \eta(t, \by) \partial_i\bm{\nu}(\by), \quad i =1,2,
\end{align*}
the covariant components of the first fundamental form of  the deformed middle surface  $ \bm{\varphi}_\eta(t, \omega)$ is given by
\begin{align*}
\partial_i \bm{\varphi}_\eta(t, \by) \cdot \partial_j \bm{\varphi}_\eta(t, \by) 
=\partial_i \bm{\varphi}(\by) \cdot \partial_j \bm{\varphi}( \by)+ G_{ij}(\eta)
\end{align*}
where
\begin{align*}
G_{ij}(\eta) &:= \partial_i \eta(t, \by)  \partial_j \eta(t, \by) + \eta(t, \by)\big[\partial_i \bm{\varphi}( \by) \cdot\partial_j  \bm{\nu}(\by) + \partial_j \bm{\varphi}( \by) \cdot\partial_i  \bm{\nu}(\by) \big] 
+ \eta^2(t, \by)\partial_i\bm{\nu}(\by) \cdot \partial_j  \bm{\nu}(\by)  
\end{align*}
are the covariant components of the `modified' change of metric tensor $\mathbb{G}(\eta)$. The normal (which is not a unit vector) to the deformed middle surface  $ \bm{\varphi}_\eta(t, \omega)$ at the point $ \bm{\varphi}_\eta(t, \by)$ is then given by
\begin{align*}
\bm{\nu}_\eta(t,\by)
&=\partial_1 \bm{\varphi}_\eta(t,\by) \times \partial_2 \bm{\varphi}_\eta(t,\by)
=
 \bm{\nu}(\by) \big\vert \partial_1 \bm{\varphi}(\by) \times \partial_2 \bm{\varphi}(\by) \big\vert
 +
  \partial_2 \eta(t,\by)\big(\partial_1 \bm{\varphi}(\by) \times \bm{\nu}(\by) 
 \\&+
 \eta(t,\by) \partial_1\bm{\nu}(\by) \times \bm{\nu}(\by) \big)
  +
  \partial_1 \eta(t,\by)\big( \bm{\nu}(\by) \times \partial_2 \bm{\varphi}(\by)  +
 \eta(t,\by) \bm{\nu}(\by) \times \partial_2\bm{\nu}(\by) \big)
 \\&
   +
\eta(t,\by)\big(\partial_1 \bm{\varphi}(\by) \times \partial_2  \bm{\nu}(\by)  +
\partial_1 \bm{\nu}(\by) \times \partial_2\bm{\varphi}(\by) \big)
+
\eta^2(t,\by)\big( \partial_1 \bm{\nu}(\by) \times \partial_2\bm{\nu}(\by) \big)
\end{align*}
and
\begin{align*}
R_{ij}^\sharp(\eta) :=\frac{\partial_{ij} \bm{\varphi}_\eta(t,\by)\cdot \bm{\nu}_\eta(t,\by)}{\vert \partial_1 \bm{\varphi}(\by) \times \partial_2 \bm{\varphi}(\by) \vert}
-
\partial_{ij} \bm{\varphi}(\by)\cdot \bm{\nu}(\by) , \quad i,j=1,2,
\end{align*}
are the covariant components of the change of curvature tensor $\mathbb{R}^\sharp(\eta)$. The elastic energy $K(\eta):=K(\eta, \eta)$ of the deformation is then given by 
\begin{equation}
\begin{aligned}
\label{koiterEnergy}
K(\eta)&= \frac{1}{2}\epsilon_0 \int_\omega \mathbb{C}:\mathbb{G}(\eta) \otimes \mathbb{G}(\eta ) \dy
+
\frac{1}{6}\epsilon_0^3 \int_\omega \mathbb{C}:\mathbb{R}^\sharp(\eta ) \otimes\mathbb{R}^\sharp(\eta ) \dy
\\&
:=\sum_{i,j,k,l=1}^2 \frac{1}{2}\epsilon_0 \int_\omega C^{ijkl}G_{kl}(\eta )G_{ij}(\eta ) \dy
+
\frac{1}{6}\epsilon_0^3 \int_\omega C^{ijkl}{R}^\sharp_{kl}(\eta ){R}^\sharp_{ij}(\eta ) \dy
\end{aligned}
\end{equation} 
where $\mathbb{C}=( C^{ijkl})_{i,j,k,l =1}^2$ is a fourth-order tensor whose entries are the contravariant components of the shell elasticity, see \cite[Page 162]{ciarlet2005Introduction}. We remark that for simplicity, we have normalized the measure $\dy$ in \eqref{koiterEnergy} which should have actually been the weighted measure $\dy_{\bm{\nu}}:=\vert \partial_1 \bm{\varphi}(\by) \times \partial_2 \bm{\varphi}(\by) \vert \mathrm{d}\by$ with the non-zero weight $\vert \partial_1 \bm{\varphi}(\by) \times \partial_2 \bm{\varphi}(\by) \vert$, see \cite{roquefort2001quelques}. 
Next, given the following geometric quantity
\begin{equation}
\begin{aligned}
\label{geomQuan}
\gamma(\eta) :=
1&+
\frac{\eta}{\vert \partial_1 \bm{\varphi}(\by) \times \partial_2 \bm{\varphi}(\by) \vert}
\Big[
\bm{\nu}(\by) \cdot\big(  \partial_1 \bm{\varphi}(\by)\times \partial_2 \bm{\nu}(\by)+\partial_1\bm{\nu}(\by) \times \partial_2 \bm{\varphi}(\by) \big) 
\Big]
\\
&+
\frac{\eta^2}{\vert \partial_1 \bm{\varphi}(\by) \times \partial_2 \bm{\varphi}(\by) \vert}
\bm{\nu}(\by)\cdot \big(\partial_1\bm{\nu} (\by)\times \partial_2\bm{\nu}(\by) \big),
\end{aligned}
\end{equation}
one deduces the $W^{2,2}(\omega)$-coercivity of the Koiter energy \eqref{koiterEnergy} as long as $\gamma(\eta)\neq0$. This is the case if $\|\eta\|_{L^\infty(\omega)}\leq \tilde L$
for some $\tilde L>0$ depending on the geometry of $\Omega$ 
 Further details can be found in \cite[Lemma 4.3
and Remark 4.4]{muha2019existence}. Without loss of generality, we assume that $L\leq \tilde L$, where $L$ is the threshold for self-intersection introduced in Section \ref{subsec:shell}. Finally, we remark that
the Koiter energy is continuous on $W^{2,p}(\omega)$ for all $p>2$ due to the Sobolev embedding
$W^{2,p}(\omega)\hookrightarrow W^{1,\infty}(\omega)$.
\\
If the mass density of $\omega$ is $\epsilon_0\rho_S$ where $\rho_S>0$ is a constant, and we simply denote the $L^2$-gradient of $K$ by $K'$ (which is to the interpreted in the sense that $K'(\eta) \zeta=2K(\eta, \zeta)$ for any $\zeta$ in the dual space of $\eta$), then the evolution of the shell is modelled by (later on, we assume for simplicity that $\epsilon_0\rho_S=1$)
\begin{align}
\label{shellEq}
\epsilon_0\rho_S\partial_t^2
\eta + K'(\eta) =g + \mathbf{F}\cdot \bm{\nu}
\end{align}
in $I \times \omega$ subject to the following initial and boundary conditions
\begin{align}
&\eta(0,\cdot) =\eta_0, \quad \partial_t \eta(0,\cdot)  =\eta_1
&\quad \text{in } \omega, \label{etaTzero}
\\
& \bu(t, \bm{\varphi}(\by)+\eta(t,\by) \bm{\nu}(\by)) =\partial_t \eta(t,\by) \bm{\nu}(\by)
&\quad \text{on } I \times \omega\label{etaNablaEta}
\end{align}
where $\eta_0, \eta_1 :\omega \rightarrow \mathbb{R}$ are given functions and where in \eqref{shellEq}, the function $g: I \times \omega \rightarrow \mathbb{R}$ is a given force density  and 
\begin{align}\label{bigF}
\mathbf{F}(t,\by):= \Big(-2\mu\, \mathbb{D}_\by \bu(t,\by)
-
[\mathbb{T}(\psi)](t,\by)
+ p(t, \by) \mathbb{I}
 \Big)\bm{\nu}_{\eta(t)} \circ \bm{\bm{\varphi}}_{\eta(t)} \vert \mathrm{det} D  \bm{\bm{\varphi}}_{\eta(t)} \vert .
\end{align}
Also, the tensor
\begin{align}
 \mathbb{D}_\by\bu =\frac{1}{2}(\naby \bu + \naby^T\bu)
\end{align} 
is the symmetric gradient of the fluid's velocity field. We will however introduce the tensor $\mathbb{T}(\psi)$ in the next section below.
\subsection{Polymer fluid}
\label{subsec:poly}
A common mathematical model to describe the behaviour of complex fluids are the FENE-type models. For these models, the polymer molecules are idealized as a chain of beads and springs with prescribed finitely extensible nonlinear elastic (FENE) type spring potentials. For a finite but arbitrary number $K>1$, $K+1$ beads are connected by $K$ springs to form a polymer chain. This polymer is represented by a vector of $K$-finite vectors
$\bq=(\bq^T_1, \ldots, \bq^T_K)^T\in B$, where $B =\bigotimes_{i=1}^K B_i\subset\R^{3K}$ is the Cartesian product of convex open sets $B_i \subset \mathbb{R}^3$ such that $\bq_i \in B_i$ if and only if  $-\bq_i \in B_i$.
\begin{figure}[h!]
  \vspace{-8pt}
  \centering
    \includegraphics[width=0.7\textwidth]{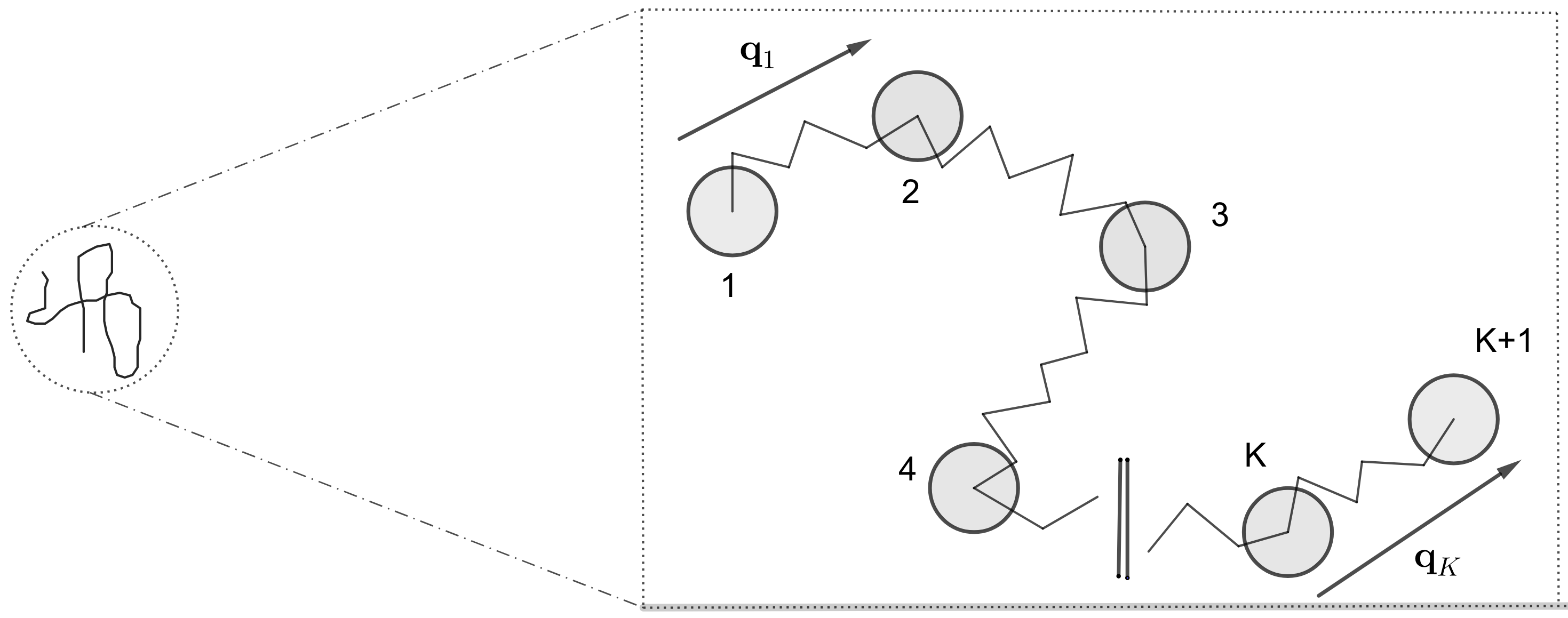}
      \vspace{-13pt}
  \caption{We model a polymer  (circled on the left) as a bead-spring chain consisting of $K+1$ beads connected by $K$ springs  (boxed on the right).}
    \vspace{-12pt}
  \label{fig.polyChain}
\end{figure}
On the mesoscopic level, we describe the evolutionary changes in the distribution of the bead-spring chain configuration  by the Fokker--Planck equation for the polymer density function $\psi =\psi(t, \mathbf{x},\mathbf{q})$ (depending on time $t\geq0$, spatial position $\bx\in\R^3$ and the prolongation vector $\bq\in B$ of the polymer chain).
On the macroscopic level, we consider a viscous fluid described by the incompressible
Navier--Stokes equations for the fluid velocity $\bu=\bu(t,\bx)$ and pressure $p=p(t,\bx)$. The beads, which model the monomers that are joined by springs to form a polymer chain, unsettle the flow field around the chain once immersed in the fluid. 
These mesoscopic effects of the polymer molecules on the fluid motion are described by an elastic stress tensor $\mathbb{T}$. It is meant to describe the random movements of polymer chains/springs and can be modelled by prescribing spring potentials $ U_i$, $i=1, \ldots, K$ for each of the $K$ springs. 
Here, for each $i=1, \ldots, K$, the potential $U_i$  is continuous on an interval $I_i\subset[0,\infty)$ containing the point $0$ where $I_i$ is the image of $B_i$ under the mapping $\bq_i\in B_i \mapsto \frac{1}{2}\vert \mathbf{q}_i \vert^2$. To be precise, we consider $U_i \in C^{0,1}_{\mathrm{loc}}(I_i;[0,\infty))$ for each $i=1, \ldots, K$. Typically, these potentials will be such that $U_i(0)=0$ and also, be monotonically increasing and unbounded on the interval $I_i$ , for each $i=1, \ldots, K$. 
The elastic spring force $\mathbf{F}_i:B_i \subset \mathbb{R}^3 \rightarrow \mathbb{R}^3$ of the $i$th spring and the associated \textit{Maxwellian} $M_i$ are defined by
\begin{align}
\label{elasticSpringForce}
\mathbf{F}_i (\mathbf{q}_i) = \nabla_{\bq_i} U_i\Big(\frac{1}{2}\vert \mathbf{q}_i \vert^2 \Big) = U'_i\Big(\frac{1}{2}\vert \mathbf{q}_i \vert^2 \Big) \mathbf{q}_i, \qquad i=1, \ldots, K
\end{align}
and
\begin{align}
\label{maxwellianPartial}
M_i(\mathbf{q}_i) = \frac{e^{-U_i \left(\frac{1}{2}\vert \mathbf{q}_i \vert^2 \right) }}{\int_{B_i}e^{-U_i \left(\frac{1}{2}\vert \mathbf{q}_i \vert^2 \right) }\,\mathrm{d}\bq_i} \qquad i=1, \ldots, K
\end{align}
respectively such that $\int_{B_i} M_i(\mathbf{q}_i)\,\mathrm{d}\bq_i=1$ for each $i=1, \ldots, K$. The (total) Maxwellian is then given by
\begin{align}
\label{maxwellian}
M(\bq) := \prod_{i=1}^KM_i(\mathbf{q}_i) , \qquad
\bq=(\bq^T_1, \ldots, \bq^T_K)^T\in B =\bigotimes_{i=1}^K B_i
\end{align}
and also satisfies $\int_{B} M(\mathbf{q})\,\mathrm{d}\bq=1$.
We observe from \eqref{elasticSpringForce}--\eqref{maxwellian} that
\begin{align}
\label{fandM}
M(\bq) \nabla_{\bq_i}\frac{1}{M(\bq)} =  \frac{-1}{M(\bq)}\nabla_{\bq_i} M(\bq) =\mathbf{F}_i (\mathbf{q}_i) 
\end{align}
for each $i=1, \ldots, K$. 
\\
Before we continue, we now give some examples of the precise force-laws \eqref{elasticSpringForce} used in the literature, see \cite[Table 11.5-1]{bird1987dynamics}. For simplicity of the presentation, we only describe the `dumbbell' models corresponding to $K=1$.
\begin{example}[\textbf{Hookean dumbbell model}]
This model has a prescribed linear spring force and (equivalently) linear spring potential given by $\mathbf{F}(\bq)=\bq$ where $\bq\in B=\R^d$, $d=2,3$ and $U(s)=s$, $s\in \R_{\geq0}$ respectively. Therefore, the beads coalesce at the origin $\bm{0}\in B$ when the spring force is zero. The main drawback of this model is that it admits arbitrarily large spring extension making it physically unrealistic.
\end{example}
\begin{example}[\textbf{`Linear-locked' Tanner's dumbbell model}]
This model is a realistic variant of the Hookean dumbbell model with the same linear force law and linear spring potential except that now,  $\bq\in B=B(\bm{0},\sqrt{b}) \subset \R^d$, $d=2,3$ where  $B=B(\bm{0},\sqrt{b})$ is a bounded open set centred at $\bm{0}\in \R^d$ of radius  $\sqrt{b}$, with $b>0$. The prescribed radius $\sqrt{b}>0$ thus denotes the extent to which the springs can be stretched.
\end{example}
%
\begin{example}[\textbf{FENE [finitely extensible nonlinear elastic] dumbbell model}]
This model also corresponds to the case $K=1$ but with a nonlinear spring force given by $\mathbf{F}(\bq)=(1- \vert \bq \vert^2/b)^{-1}\bq$ where $\bq\in B=B(\bm{0},\sqrt{b}) \subset \R^d$, $d=2,3$. 
Equivalently, the nonlinear spring potential is given $U(s)=-\frac{b}{2}\log(1-\frac{2s}{b})$, $s\in [0,\frac{b}{2})$. 
\end{example}
With respect to regularity, we assume that the Maxwellian satisfies the following conditions:
\begin{align}
&M\in C(\overline{B}) \cap C^{0,1}_{\mathrm{loc}}(B) \cap W^{1,1}_0(B), \quad M\geq 0, \quad M^{-1} \in C_{\mathrm{loc}}(B).\label{boundMiqi}
\end{align}
Now, for a given probability density function  $\psi =\psi(t, \mathbf{x},\mathbf{q})$  of a polymer, we let
\begin{align}
\label{eta}
\Xi(t,x) = \int_B \psi(t, \bx, \bq) \dq, \qquad (t, \bx) \in I \times \Omega_{\eta(t)}
\end{align}
be the \textit{polymer number density}. The elastic stress tensor $\mathbb{T}$ is then given by
\begin{align}
\label{extraStreeTensor}
\mathbb{T}(\psi) 
=
k\sum_{i=1}^K \mathbb{T}_i(\psi) - k(K+1)\Xi \,\mathbb{I}
-\eth \,\Xi^2 \mathbb{I}
\end{align}
where $\mathbb{I}$ is the identity matrix, $k >0$  and $\eth\geq 0$ are constants and for each $i=1, \ldots, K$,
\begin{align}
\label{extraStreeTensor1}
\mathbb{T}_i(\psi) 
=
 \int_B \psi(t, \mathbf{x},\mathbf{q})U'_i\Big(\frac{1}{2}\vert \mathbf{q}_i \vert^2 \Big) \mathbf{q}_i \otimes\mathbf{q}_i \dq
= 
 \int_B \psi(t, \mathbf{x},\mathbf{q})\, \mathbf{F}_i(\mathbf{q}_i) \otimes\mathbf{q}_i \dq,
\end{align}
elucidates how  the polymers - described by the force law for the $i$th spring - are transmitted through the fluid. 
\\
In addition to the  elastic stress tensor $\mathbb T$, we consider an external volume force  $\mathbf{f}:(t, \mathbf{x} )\in I \times \Oeta  \mapsto  \mathbf{f}(t, \mathbf{x}) \in \mathbb{R}^3$ in the fluid motion. This force may account for the influence of gravity and/or electric force as well as artificial forces produced, for example, by an ultracentrifuge. \\
The coupled system is now described by the incompressible Navier--Stokes--Fokker--Planck system in the moving domain $I\times\Omega_{\eta(t)}$ for a given function $\eta:I\times\partial\Omega\rightarrow (-L,L)$.
We wish to find the fluid's velocity field $\mathbf{u}:(t, \mathbf{x})\in I \times \Oeta \mapsto  \mathbf{u}(t, \mathbf{x}) \in \mathbb{R}^3$, the pressure $p:(t, \mathbf{x})\in I \times \Oeta \mapsto  p(t, \mathbf{x}) \in \mathbb{R}$ and the probability density function $\psi:(t, \mathbf{x}, \mathbf{q})\in I \times \Oeta \times B \mapsto \psi(t, \mathbf{x}, \mathbf{q}) \in [0,\infty)$
such that the equations
\begin{align}
\divx \bu=0,
\label{contEq}
\\
\partial_t \bu  + (\mathbf{u}\cdot \nabx)\mathbf{u} + \nabx   p
= 
\mu\Delta_\bx   \vu +\divx   \mathbb{T}(\psi) + \mathbf{f},
\label{momEq}
\\
\partial_t \psi + (\mathbf{u}\cdot \nabx) \psi
=
\varepsilon \Delx \psi
- \sum_{i=1}^K
 \divqi  \big( (\nabx   \mathbf{u}) \bq_i\psi \big) 
 +
\frac{1}{4\lambda} \sum_{i=1}^K \sum_{j=1}^K A_{ij}\,  \divqi  \bigg( M \nabqj  \frac{\psi}{M} 
\bigg)
\label{fokkerPlank}
\end{align}
are satisfied weakly in $I \times \Oeta \times B$ subject to the following initial ($t=0$)  condition and boundary ($\by \in \partial \Oeta $ or $\bq_i\in\partial \overline{B}_i$) conditions 
\begin{align}
&\bigg[\frac{1}{4\lambda}  \sum_{j=1}^K A_{ij}\, M \nabqj  \frac{\psi}{M} -(\nabx   \mathbf{u}) \bq_i\psi
 \bigg] \cdot \bn_i =0
&\quad \text{on }I \times \Oeta \times \partial \overline{B}_i, \quad\text{for } i=1, \ldots, K,
\label{fokkerPlankBoundary}
\\
&\varepsilon \naby \psi \cdot \bm{\nu}_{\eta(t)} =0
&\quad \text{on }  I \times \partial \Oeta \times B, \label{fokkerPlankBoundaryNeumann}
\\
&\mathbf{u}(0, \cdot) = \mathbf{u}_0
&\quad \text{in }  \Omega_{\eta_0},
\label{initialDensityVelo}
\\
&\psi(0, \cdot, \cdot) =\psi_0 \geq 0
& \quad \text{in }\Omega_{\eta_0} \times B.
\label{fokkerPlankIintial}
\end{align}
The parameter $\mu>0$ is the viscosity coefficient, $\varepsilon>0$  is the center-of-mass diffusion coefficient, $\lambda > 0$ is the \textit{Deborah number} $\mathrm{De}$, the $A_{ij}$'s are the components of the symmetric positive definite \textit{Rouse matrix} $(A_{ij})_{i,j=1}^K$ whose smallest eigenvalue is $A_0>0$ and $\bn_i$ is a unit outward normal vector to $\partial B_i$.
\\
If we now return to  \eqref{eta} for a moment, we observe that by formally integrating \eqref{fokkerPlank} over the open set $B$ and using the boundary condition \eqref{fokkerPlankBoundary}, then $\Xi$ satisfies the following viscous transport equation
\begin{align}
\label{transportXi}
\partial_t \Xi+ (\bu \cdot \nabx)\Xi= \varepsilon\Delx \Xi
\end{align}
weakly in $I \times \Oeta $ subject to the following initial and boundary conditions
\begin{align}
&\varepsilon \naby \Xi \cdot \bm{\nu}_{\eta(t)} =0
&\quad \text{on }  I \times \partial \Oeta, \label{XiBoundaryNeumann}
\\
&\Xi_0=\int_B\psi_0( \cdot, \bq)\dq \geq 0
& \quad \text{in }\Omega_{\eta_0}.
\label{XiIintial}
\end{align}
The structure of the tensor $\mathbb{T}$, given by \eqref{extraStreeTensor}, means that the analysis of \eqref{transportXi}--\eqref{XiIintial} is essential to the analysis of the extra stress tensor $\mathbb{T}$.
\\
If we now define $\widehat{\psi}:=\psi/M$, then 
the full extra stress tensor \eqref{extraStreeTensor} may be rewritten as
\begin{align}
\label{extraStreeTensorx}
\mathbb{T}(M\widehat{\psi}) 
=
\mathbb{T}(\psi) 
=
k\sum_{i=1}^K  \int_BM(\bq) \nabqi \widehat{\psi}(t, \mathbf{x},\mathbf{q})\otimes\mathbf{q}_i \dq
 -
  k\,\Xi \,\mathbb{I}
-\eth \,\Xi^2 \mathbb{I}.
\end{align}
The Fokker--Planck equation
\eqref{fokkerPlank} then become
\begin{align}
\partial_t (M\widehat{\psi}) + (\mathbf{u}\cdot \nabx) M\widehat{\psi}
=
\varepsilon \Delx (M\widehat{\psi})
- \sum_{i=1}^K
 \divqi  \big( M(\nabx   \mathbf{u}) \bq_i\widehat{\psi} \big) 
 +
\frac{1}{4\lambda} \sum_{i=1}^K \sum_{j=1}^K A_{ij}\,  \divqi  \big( M \nabqj  \widehat{\psi} 
\big)
\label{fokkerPlankx}
\end{align}
subject to the initial and boundary conditions
\begin{align}
&\bigg[\frac{1}{4\lambda}  \sum_{j=1}^K A_{ij}\, M \nabqj  \widehat{\psi} - M(\nabx   \mathbf{u}) \bq_i\widehat{\psi} 
 \bigg] \cdot \bn_i =0
&\quad \text{on }I \times \Oeta \times \partial \overline{B}_i, \quad\text{for } i=1, \ldots, K,
\label{fokkerPlankBoundaryx}
\\
&\varepsilon \naby\widehat{\psi} \cdot \bm{\nu}_{\eta(t)} =0
&\quad \text{on }  I \times \partial \Oeta \times B, \label{fokkerPlankBoundaryNeumannx}
\\
&\widehat{\psi}(0, \cdot, \cdot) =\widehat{\psi}_0 \geq 0
& \quad \text{in }\Omega_{\eta_0} \times B.
\label{fokkerPlankIintialx}
\end{align}

\subsection{Plan of the paper}
In the following, we give the outline of the rest of this paper. As stated in the abstract above, we aim to show the existence of a weak solution to the coupled system \eqref{contEq}--\eqref{fokkerPlankIintial} and \eqref{shellEq}--\eqref{etaNablaEta} where the solution exists globally in time until the shell approaches a self-intersection or the $W^{2,2}$-coercivity of the Koiter energy given in \eqref{koiterEnergy} degenerates.
Therefore, after collecting some preliminary notations and concepts
in Section \ref{subsec:Not} and Section \ref{subsec:Function}, 
we make precise, the exact notion of a solution to our system in Section \ref{subsec:solut}. We also state our main theorem, Theorem \ref{thm:main} in Section \ref{subsecMainResult}. We then move to Section \ref{sec:energEsr},  where we show how to formally derive a priori estimates and also, introduce the energy and relative entropy  of our system.
\\
Our principal strategy to solve the coupled system consists in regularising the shell (and the convective terms) and to decouple
the fluid-structure problem from the Fokker--Planck equation.
For this reason, in Section \ref{sec:SolveFokkerPlanck}, we solve the Fokker--Planck equation in a variable (but given) domain for a given (and smooth) velocity field. On the other hand, for a given probability density function, we  solve the fluid-structure problem by following the fixed-point arguments in \cite{lengeler2014weak}. This is done in Section \ref{sec:reg}. Finally, we pass to the limit in the regularisation layer in Section \ref{sec:main}. For this, we are required to rigorously prove the entropy estimates from Section \ref{sec:energEsr} (see Remark \eqref{rem:EnerEntr} for what we mean by an entropy estimate)
and to apply compactness methods to pass to the limit in the nonlinear terms of our system.

\section{Preliminaries and main result}
\label{sec:prelim}
In this section, we fix the notation, collect some preliminary material on function spaces and present the main result.
\subsection{Notations}
\label{subsec:Not}
The following quantities: $t\in \overline{I}$, representing  the time variable, $\mathbf{x}\in \Omega$, representing the spatial variable, and $\mathbf{q}\in B$, representing the elongation vector of a polymer molecule, will denote the independent variables we shall use throughout this work. The domain $B =\bigotimes_{i=1}^K B_i\subset\R^{3K}$ is the Cartesian product of $K$ convex and bounded open sets $B_i \subset \mathbb{R}^3$ for which $\bq_i \in B_i$ if and only if $-\bq_i \in B_i$. In particular, the origin $\bm{0} \in \R^3$ is contained in each set $B_i$ whose boundary we denote by $\partial B_i$. We further have
\begin{align*}
\partial B = \bigcup_{i=1}^K \partial \overline{B_i}, \quad \text{where}\quad   \partial \overline{B_i} := B_1\times \ldots \times B_{i-1} \times \partial B_i \times B_{i+1} \times \ldots \times B_K
\end{align*}
with $\bn_i$ being a unit outward normal vector to $\partial B_i$, $i=1,\ldots,K$.
For functions $F$ and $G$ and a variable $p$, we write $F \lesssim G$ and $F \lesssim_p G$ if there exists  a generic constant $c>0$ and another such constant $c(p)>0$ which now depends on $p$ such that $F \leq c\,G$ and $F \leq c(p) G$ holds respectively. The symbol $\vert \cdot \vert$ may be used in four different context. For a scaler function $f\in \mathbb{R}$, $\vert f\vert$ denotes the absolute value of $f$. For a vector $\bff\in \mathbb{R}^n$ where $n>1$ is an integer, $\vert \bff \vert$ denotes the Euclidean norm of $\bff$. For a square matrix $\mathbb{F}\in \mathbb{R}^{n\times n}$ where $n>1$ is an integer, $\vert \mathbb{F} \vert$ shall denote the Frobenius norm $\sqrt{\mathrm{trace}(\mathbb{F}^T\mathbb{F})}$. Finally, if $S\subset \R^n$ is  a (sub)set, then $\vert S \vert$ is the $n$-dimensional Lebesgue measure of $S$.
\\
Let $\mathcal{O}\subset \R^d$ be a measureable set. By $L^p(\mathcal{O})$ [respectively $L^p(\mathcal{O}; \mathbb{R}^3)$], $W^{s,p}(\mathcal{O})$ [respectively $W^{s,p}(\mathcal{O}; \mathbb{R}^3)$] and $D^{s,p}(\mathcal{O})$ [respectively $D^{s,p}(\mathcal{O}; \mathbb{R}^3)$] for $1\leq p\leq\infty$ and $s\in\mathbb N$, we denote respectively, the standard Lebesgue spaces, Sobolev spaces, homogeneous Sobolev spaces for scalar-valued [respectively $\mathbb{R}^3$-vector-valued] functions defined on $\mathcal O$. The dual space of $W^{s,p}(\mathcal{O})$ will be denoted by $W^{-s,p}(\mathcal{O})$.  By $L^p_{\divx}(\mathcal{O}; \mathbb{R}^3)$, we mean vector-valued measurable functions $\bv :\mathcal{O} \rightarrow \mathbb{R}^3$ such that $\divx \bv=0$ in the distributional sense and $\Vert \bv \Vert_{L^p(\mathcal{O})} <\infty$. The Sobolev space $W^{s,p}_0(\mathcal{O})$ is endowed with zero boundary condition (i.e. it is the closure of the smooth and compactly supported functions in $W^{s,p}(\mathcal{O})$). Also, for $s\in(0,1)$ and $1\leq p<\infty$,
we define the fractional Sobolev space $W^{s,p}(\mathcal O)$ as the set of all measurable functions $f:\Omega\rightarrow\R$ such that
\begin{align}
\label{frac}
\|f\|_{W^{s,p}(\mathcal O)}^p=\int_{\mathcal O}|f(\bx)|^p\dx+\int_{\mathcal O}\int_{\mathcal O}\frac{|f(\bx)-f(\mathbf{z})|^p}{|\bx-\mathbf{z}|^{d+ps}}\dx\,\dd\mathbf{z}<\infty.
\end{align}
In general, for a separable Banach space $(X,\|\cdot\|_X)$, we denote by $L^p(0,T;X)$, the space of Bochner-measurable functions $u:(0,T)\rightarrow X$ such that $\|u\|_X\in L^p(0,T)$. Similarly, we consider
the space $L^p(\mathcal{O};X)$ for a  measurable set $\mathcal{O}\subset \R^d$.
Also,  $C(\mathcal{O};X)$ is the set of continuous functions $u:\mathcal{O}\rightarrow X$. Finally, for any nonnegative $N\in C(\mathcal{O})$, where $\mathcal{O}\subset \R^d$ is a measurable set, and for a constant $p\geq1$, we denote by
\begin{align*}
&L^ p _N(\mathcal{O})  =  \big\{ f\in L^p_{\mathrm{loc}}(\mathcal{O}) \, : \, \Vert f \Vert_{L^p_N(\mathcal{O})}^p < \infty \big\}, 
\quad
&W^{1,p}_N(\mathcal{O}) =  \big\{ f\in W^{1,p}_{\mathrm{loc}}(\mathcal{O}) \, : \, \Vert f \Vert_{W^{1,p}_N(\mathcal{O})}^p < \infty \big\},
\end{align*}
the weighted $L^p $ and $W^{1,p}$ spaces over $\mathcal{O}$ with norms
\begin{align*}
\Vert f \Vert_{L^ p _N(\mathcal{O})}^p :=  \int_\mathcal{O} N(\mathbf{z} ) \vert  f(\mathbf{z} ) \vert^ p  \,\mathrm{d} \mathbf{z}
\quad \quad \text{and} \quad \quad
\Vert f \Vert_{W^{1,p}_N(\mathcal{O})}^p :=  \int_\mathcal{O} N(\mathbf{z})\big( \vert    f(\mathbf{z}) \vert^p+ \vert \nabla_{\mathbf{z}}f(\mathbf{z}) \vert^p\big)  \,\mathrm{d} \mathbf{z},
\end{align*}
respectively.

\subsection{Function spaces on variable domains}
\label{subsec:Function}
 The spatial domain $\Omega$ is a nonempty bounded subset of $\mathbb{R}^3$ with smooth boundary and an outer unit normal $\bm{\nu}$, $\partial \Omega$ is the shell of $\Omega \subset \R^3$.
We use $\by \in \partial \Omega$ to emphasis spatial boundary points with a corresponding surface measure $\dy$. To further clarify, we identify the shell as a \textit{usual boundary} by tracing out its $2$-dimensional mid-section $\omega\in \R^2$, see Fig \ref{fig.tubular}. In the following, $\overline{I}$ is the closure of $I=(0,T)$, a time interval where $T>0$ is a constant.  For $\eta \in C(\overline{I} \times \partial\Omega)$ satisfying $\Vert \eta \Vert_{L^\infty(I \times \partial\Omega)}<L$, we shall abuse notation and denote the deformed spacetime cylinder $\cup_{t\in I} \{t\} \times\Omega_{\eta(t)} \subset\R^4$ by either $\Omega_{\eta(t)}^I$ or $I\times \Omega_{\eta(t)}$. 
We are now in the position to define function spaces on a variable domain.
\begin{definition}{(Function spaces)}
We define for $1\leq p,r\leq\infty$, 
\begin{align*}
L^p(I;L^r(\Omega_{\eta(t)}))&:=\big\{v\in L^1(I\times\Omega_{\eta(t)}):\,\,v(t,\cdot)\in L^r(\Omega_{\eta(t)})\,\,\text{for a.e. }t,\,\,\|v(t,\cdot)\|_{L^r(\Omega_{\eta(t)})}\in L^p(I)\big\},\\
L^p(I;W^{1,r}(\Omega_{\eta(t)}))&:=\big\{v\in L^p(I;L^r(\Omega_{\eta(t)})):\,\,\nabx v\in L^p(I;L^r(\Omega_{\eta(t)}))\big\}.
\end{align*}
\end{definition}
We now give a concept of convergence in variable domains which is similar to \cite[Sec. 2.3]{breitSchw2018compressible}.
\begin{definition}[Convergence]
\label{def:varConv}
Let $(\eta_i) \subset C(\overline{I} \times \partial\Omega;[-\theta L, \theta L])$, $\theta \in (0,1)$ be a sequence such that $\eta_i \rightarrow \eta$ uniformly in $\overline{I} \times \omega$. Let $p_1, p_2,p_3\in [1,\infty]$ and let $M$ satisfy \eqref{boundMiqi} or be identically equal to one. Then;
\begin{itemize}
\item[(a)] we say that a sequence $g_i \in L^{p_1}(I;L^{p_2}(\Omega_{\eta_i(t)};L^{p_3}_M(B)))$ converges strongly to $g$ in\\ $L^{p_1}(I;L^{p_2}(\Omega_{\eta_i(t)};L^{p_3}_M(B)))$ with respect to $\eta_i$, denoted by $ g_i \rightarrow^\eta g$ in $L^{p_1}(I;L^{p_2}(\Omega_{\eta_i(t)};L^{p_3}_M(B))),$ if
\begin{align*}
\chi_{\Omega_{\eta_i(t)}}g_i \rightarrow \chi_{\Omega_{\eta(t)}}g \quad \text{in} \quad L^{p_1}(I;L^{p_2}(\R^3;L^{p_3}_M(B)));
\end{align*}
\item[(b)] for $p_1, p_2,p_3\in [1,\infty)$, we say that a sequence $g_i \in L^{p_1}(I;L^{p_2}(\Omega_{\eta_i(t)};L^{p_3}_M(B)))$ converges weakly to $g$ in $L^{p_1}(I;L^{p_2}(\Omega_{\eta(t)};L^{p_3}_M(B)))$ with respect to $\eta_i$, denoted by $ g \rightharpoonup^\eta g$ in $L^{p_1}(I;L^{p_2}(\Omega_{\eta_i(t)};L^{p_3}_M(B))),$ if
\begin{align*}
\chi_{\Omega_{\eta_i(t)}}g_i \rightharpoonup \chi_{\Omega_{\eta(t)}}g \quad \text{in} \quad L^{p_1}(I;L^{p_2}(\R^3;L^{p_3}_M(B)));
\end{align*}
\item[(c)] for $p_1=\infty$ and $ p_2,p_3\in [1,\infty)$, we say that a sequence $g_i \in L^{\infty}(I;L^{p_2}(\Omega_{\eta_i(t)};L^{p_3}_M(B)))$ converges weakly$^*$ to $g$ in $L^{\infty}(I;L^{p_2}(\Omega_{\eta(t)};L^{p_3}_M(B)))$ with respect to $\eta_i$, denoted by $ g_i \rightharpoonup^{*,\eta} g$ in $L^{\infty}(I;L^{p_2}(\Omega_{\eta_i(t)};L^{p_3}_M(B))),$ if
\begin{align*}
\chi_{\Omega_{\eta_i(t)}}g_i \rightharpoonup^* \chi_{\Omega_{\eta(t)}}g \quad \text{in} \quad L^{\infty}(I;L^{p_2}(\R^3;L^{p_3}_M(B))).
\end{align*}
\end{itemize}
\end{definition}
Definition \ref{def:varConv} can be extended  in a canonical way to Sobolev spaces.\\

Since we are dealing with boundary value problems, we need a concept of traces on variable domains.
The following lemma is a modification of \cite[Corollary 2.9]{lengeler2014weak}. See also, \cite{muha2014note}. We recall the  transform $\bm{\varphi}_\eta$ from Section \ref{sec:koiterEnergy}.
\begin{lemma}[Trace operator]\label{lem:2.28}
Let $1<p<3$ and $\eta\in W^{2,2}(\partial\Omega)$ with $\|\eta\|_{L^\infty(\partial\Omega)}<L$. Then
the linear mapping
$\tr_\eta:v\mapsto v\circ\bm{\varphi}_\eta|_{\partial\Omega}$ is well defined and continuous from $W^{1,p}(\Omega_\eta)$ to $W^{1-\frac1r,r}(\partial \Omega)$ for all $r\in (1,p)$ and well defined and continuous from  $W^{1,p}(\Omega_\eta)$ to $L^{q}(\partial\Omega)$ for all $1<q<\frac{2p}{3-p}$.
The continuity constants depend only on $\Omega,p,$ and $\|\eta\|_{W^{2,2}(\partial\Omega)}$. 
\end{lemma}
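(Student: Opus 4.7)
The plan is to reduce the problem to the smooth fixed reference domain $\Omega$ via the Hanzawa transform $\bm{\Psi}_\eta$ and then invoke the classical trace theorem there. First, set $\tilde v:=v\circ\bm{\Psi}_\eta:\Omega\to\R$. Since the restriction $\bm{\Psi}_\eta|_{\partial\Omega}$ agrees with $\bm{\varphi}_\eta$ after the identification of $\partial\Omega$ with $\omega$ via $\bm{\varphi}$, the map $\tr_\eta v$ is exactly the classical boundary trace of $\tilde v$ on the smooth boundary $\partial\Omega$. The task therefore reduces to controlling $\tilde v$ in $W^{1,r}(\Omega)$ for every $r<p$ in terms of $\|v\|_{W^{1,p}(\Omega_\eta)}$ and $\|\eta\|_{W^{2,2}(\partial\Omega)}$.

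The key quantitative input is the regularity of the deformation. From the explicit formula \eqref{deform}, $\nabla\bm{\Psi}_\eta$ is built from $\nabla_\by\eta$ together with fixed smooth geometric quantities attached to $\partial\Omega$. Because the boundary is two-dimensional, the Sobolev embedding $W^{2,2}(\partial\Omega)\hookrightarrow W^{1,s}(\partial\Omega)$ holds for every finite $s$, with constants depending on $\|\eta\|_{W^{2,2}(\partial\Omega)}$. Combining this with the tubular parametrisation $\Lambda^{-1}(\bx)=(\by(\bx),s(\bx))$ of $S_L$ and integrating over the thickness direction yields $\nabla\bm{\Psi}_\eta\in L^s(\Omega)$ for all $s<\infty$. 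The hypothesis $\|\eta\|_{L^\infty(\partial\Omega)}<L$ simultaneously ensures that $\det D\bm{\Psi}_\eta$ is bounded above and bounded away from zero on $\Omega$.

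Given these ingredients, the core estimate is direct. For $r\in(1,p)$ the chain rule $\nabla\tilde v(\bx)=(D\bm{\Psi}_\eta)^T(\bx)(\nabla v)(\bm{\Psi}_\eta(\bx))$ combined with Hölder's inequality at the conjugate exponents $p/r$ and $p/(p-r)$ gives
\begin{align*}
\|\nabla\tilde v\|_{L^r(\Omega)}\lesssim \|D\bm{\Psi}_\eta\|_{L^{rp/(p-r)}(\Omega)}\,\|(\nabla v)\circ\bm{\Psi}_\eta\|_{L^p(\Omega)},
\end{align*}
in which the first factor is finite by the preceding paragraph and the second is controlled by $\|\nabla v\|_{L^p(\Omega_\eta)}$ via a further change of variables using the lower bound on $\det D\bm{\Psi}_\eta$. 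An analogous (simpler) estimate handles $\|\tilde v\|_{L^r(\Omega)}$. The classical trace theorem on the smooth domain $\Omega$ applied to $\tilde v\in W^{1,r}(\Omega)$ then delivers $\tr_\eta v\in W^{1-\frac{1}{r},r}(\partial\Omega)$ continuously, which is the first assertion. For the second, the fractional Sobolev embedding on the two-dimensional manifold $\partial\Omega$ gives $W^{1-\frac{1}{r},r}(\partial\Omega)\hookrightarrow L^{q}(\partial\Omega)$ whenever $q\leq\frac{2r}{3-r}$ (valid since $r<3$); sending $r\uparrow p$ exhausts all $q<\frac{2p}{3-p}$.

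The only substantive obstacle is the unavoidable loss of integrability from $p$ to some $r<p$ in the pull-back step: because $\partial\Omega_\eta$ is merely of $W^{2,2}$ regularity rather than Lipschitz, no direct trace theorem is available on $\Omega_\eta$, and one must route through $\Omega$ paying the Hölder price for $D\bm{\Psi}_\eta\in L^{rp/(p-r)}$. This loss can be made arbitrarily small, which is precisely why the target range in the statement is the strict open one $r<p$ and $q<\frac{2p}{3-p}$.
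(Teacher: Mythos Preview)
Your proof is correct and follows exactly the standard argument that the paper invokes by citation to \cite[Corollary 2.9]{lengeler2014weak} and \cite{muha2014note}: pull back to the fixed reference domain via the Hanzawa transform, exploit that $\nabla_\by\eta\in W^{1,2}(\partial\Omega)\hookrightarrow L^s(\partial\Omega)$ for all $s<\infty$ on the two-dimensional boundary to control $D\bm{\Psi}_\eta$ in every $L^s$, pay a H\"older price to land $\tilde v$ in $W^{1,r}(\Omega)$ for $r<p$, and then apply the classical trace theorem and fractional Sobolev embedding on $\partial\Omega$. The paper itself does not supply an independent proof, so there is nothing further to compare.
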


\subsection{Concept of a solution}
\label{subsec:solut}
In order to describe the notion of a solution that we wish to construct, we first define the following energy functionals:
\begin{equation}
\begin{aligned}
\label{energyFunctional}
\mathcal{E}(t) &:=
 \Vert \Xi(t, \bx)\Vert^2_{L^\infty(\Omega_{\eta(t)})} 
+
\int_{\Oeta}
\frac{1}{2} \vert \bu(t, \bx) \vert^2  \dx
+
\int_{\omega}
\frac{1}{2}
 \vert \partial_t \eta(t, \by) \vert^2  \dy
+ K(\eta(t))
\\&+
k \int_{\Oeta \times B} 
M(\bq)\, \mathcal{F} \big( \widehat{\psi}(t, \bx, \bq)\big)
\dq \dx,
\end{aligned}
\end{equation}
where 
\begin{align}
\label{relativeEntropyFunctional}
\mathcal{F}(s):= s\ln s+\mathrm{e}^{-1}\quad\text{ for } \quad s\geq 0
\end{align}
is the  \textit{entropy function} that generates the physical \textit{relative (with respect to the Maxwellian) entropy}
\begin{align}
\label{relativeEntropyDefined}
\mathcal{E}_{\mathcal{F}}^{\eta(t)}\big( \widehat{\psi} \vert M \big):=\int_{\Omega_{\eta(t)} \times B} M \mathcal{F} \big( \widehat{\psi} \big)\dq\dx
\end{align}
and let
\begin{equation}
\begin{aligned}
\label{initialEner}
\mathcal{E}(0) &:=
 \Vert \Xi_0( \bx)\Vert^2_{L^\infty(\Omega_{\eta(0)})} 
 +
\int_{\Omega_{\eta_0}}
\frac{1}{2} \vert \bu_0(\bx) \vert^2  \dx
 +
\int_{\omega}
 \frac{1}{2}
\vert  \eta_1(\by) \vert^2  \dy
+ K(\eta_0)
\\&+
k \int_{\Omega_{\eta_0} \times B} 
M\, \mathcal{F} \big(  \widehat{\psi}_0(\bx,\bq) \big)
\dq \dx
\end{aligned}
\end{equation}
be the initial energy, recall \eqref{etaTzero}.
With the above information in hand, we now proceed to make rigorous, what we mean by a solution.
\begin{definition}[Finite energy weak solution] \label{def:weakSolution}
Let $(\bff, g, \eta_0,  \widehat{\psi}_0, \bu_0, \eta_1)$ be a dataset such that
\begin{equation}
\begin{aligned}
\label{dataset}
&\bff \in L^2\big(I; L^2_{\mathrm{loc}}(\mathbb{R}^3; \mathbb{R}^{3})\big),\quad
g \in L^2\big(I; L^2(\omega)\big), \quad
\eta_0 \in W^{2,2}(\omega) \text{ with } \Vert \eta_0 \Vert_{L^\infty( \omega)} < L, 
\\
& 0\leq\widehat{\psi}_0\in L^2\big( \Omega_{\eta_0}; L^2_M(B) \big), \quad
\vu_0\in L^2_{\mathrm{\divx}}(\Omega_{\eta_0}; \mathbb{R}^{3}) \text{ is such that }\mathrm{tr}_{\eta_0} \bu_0 =\eta_1 \gamma(\eta_0), \quad
\eta_1 \in L^2(\omega).
\end{aligned}
\end{equation} 
In addition, we assume
\begin{align}
\label{XI0xxx}
\Xi_0\in L^\infty(\Omega_{\eta_{0}}) \quad \text{where} \quad \Xi_0=\int_B M(\bq)\widehat{\psi}_0(\cdot,\bq) \dq \quad\text{in}\quad \Omega_{\eta_{0}}.
\end{align}
We call the triple
$(\bu, \widehat{\psi}, \eta  )$
a \textit{finite energy weak solution} to the system \eqref{contEq}--\eqref{fokkerPlankIintial} and \eqref{shellEq}--\eqref{etaNablaEta} with data $(\bff, g, \eta_0,  \widehat{\psi}_0, \bu_0, \eta_1)$ provided that the following holds:
\begin{itemize}
\item[(a)] the velocity $\vu$ satisfies
\begin{align*}
 \vu \in L^\infty \big(I; L^2(\Omega_{\eta(t)} ;\mathbb{R}^3) \big)\cap  L^2 \big(I; W^{1,2}_{\divx}(\Omega_{\eta(t)};\mathbb{R}^3) \big) \quad \text{with} \quad 
\bu(t, \bm{\varphi}_\eta(t,\by)) =\partial_t \eta(t,\by) \bm{\nu}(\by)
\end{align*}
in the sense of traces and $\eta$ satisfies
\begin{align*}
\eta \in W^{1,\infty} \big(I; L^2(\omega) \big)\cap  L^\infty \big(I; W^{2,2}(\omega) \big) \quad \text{with} \quad \Vert \eta \Vert_{L^\infty(I \times \omega)} <L
\end{align*}
and for all  $(\phi, \bm{\varphi}) \in C^\infty(\overline{I}\times\omega) \times C^\infty(\overline{I}\times \R^3; \R^3)$ with $\phi(T,\cdot)=0$, $\bm{\varphi}(T,\cdot)=0$, $\divx \bm{\varphi}=0$ and $\mathrm{tr}_\eta\bm{\varphi}= \phi\bm{\nu}$, we have
\begin{align*}
\int_I  \frac{\mathrm{d}}{\dt}\bigg(\int_{\Oeta}\vu  \cdot \bm{\varphi}\dx
+\int_\omega \partial_t \eta \, \phi \dy
\bigg)\dt 
&=\int_I  \int_{\Oeta}\big(  \vu\cdot \partial_t  \bm{\varphi} + \vu \otimes \vu: \nabx \bm{\varphi} 
  \big) \dx\dt
\\&
-\int_I  \int_{\Oeta}\big(   
\mu \nabx \bu:\nabx \bm{\varphi}  + \mathbb{T}(\psi) :\nabx \bm{\varphi}-\bff\cdot\bm{\varphi} \big) \dx\dt\\
&+
\int_I \int_\omega \big(\partial_t \eta\, \partial_t\phi-
 +g\, \phi \big)\dy\dt-\int_I\langle K'(\eta), \phi\rangle\dt;
\end{align*}
\item[(b)] the probability density function $ \widehat{\psi}$ satisfies:
\begin{align*}
&\widehat{\psi}\geq 0 \text{ a.e. in }  I \times \Omega_{\eta(t)}\times B,
\\ 
&
\widehat{\psi} \in L^\infty \big( I \times \Omega_{\eta(t)} ; L^1_M(B) \big),
\\&\mathcal{F}(\widehat{\psi} ) \in L^\infty \big( I; L^1(\Omega_{\eta(t)}; L^1_M(B))\big),
\\&
\sqrt{\widehat{\psi}} \in  L^2 \big( I; L^2(\Omega_{\eta(t)} ; W^{1,2}_M(B))\big) \cap L^2 \big( I; D^{1,2}(\Omega_{\eta(t)} ; L^2_M(B))\big),
\\&
 \Xi(t,\bx) = \int_BM \widehat{\psi}(t, \bx, \bq) \dq \in L^\infty\big(I \times \Omega_{\eta(t)}\big) \cap L^2\big(I; W^{1,2}(\Omega_{\eta(t)}) \big);
\end{align*}
and for all  $\varphi \in C^\infty (\overline{I}\times \R^3 \times \overline{B} )$, we have
\[
\begin{split}
\int_I  \frac{\mathrm{d}}{\dt} \int_{\Omega_{\eta(t)} \times B}M \widehat{\psi} \, \varphi \dq \dx \dt &= \int_{I \times \Omega_{\eta(t)} \times B}\big(M \widehat{\psi} \,\partial_t \varphi 
+
M\bu  \widehat{\psi} \cdot \nabx \varphi -
\varepsilon M\nabx \widehat{\psi} \cdot \nabx \varphi \big) \dq \dx \dt
\\&
+\sum_{i=1}^K \int_{I \times \Omega_{\eta(t)} \times B}
 \bigg( M(\nabx   \bu ) \mathbf{q}_i\widehat{\psi}-
\sum_{j=1}^K \frac{A_{ij}}{4\lambda}   M \nabqj  \widehat{\psi} \bigg) \cdot \nabqi\varphi \dq \dx \dt;
\end{split}
\]
\item[(c)] for all $t\in I$, we have
\begin{equation}
\begin{aligned}
\label{energyEst}
\mathcal{E}(t)
&+\mu\int_0^t
 \int_{\Omega_{\eta(\sigma)}}\vert \nabx \bu \vert^2 \dx\ds
 +
\varepsilon
  \int_0^t\int_{\Omega_{\eta(\sigma)}}\vert \nabx \Xi \vert^2  \dx\ds
  \\&+
 4k\,\varepsilon \int_0^t
 \int_{\Omega_{\eta(\sigma)} \times B}
 M\Big\vert \nabx \sqrt{ \widehat{\psi} } \Big\vert^2
 \dq \dx\ds
 +
 \frac{kA_0}{\lambda} \int_0^t
 \int_{\Omega_{\eta(\sigma)} \times B}
 M\Big\vert \nabq \sqrt{ \widehat{\psi} } \Big\vert^2
 \dq \dx\ds\\
& \lesssim
 \mathcal{E}(0)
 + \frac{1}{2}\int_0^t\
\Vert \mathbf{f}\Vert^2_{L^2(\Omega_{\eta(\sigma)})}\ds 
+\frac{1}{2}
\int_0^t\
\Vert g  \Vert^2_{L^2(\omega)}\ds.
\end{aligned}
\end{equation}
\end{itemize}
\end{definition}
\begin{remark}
\label{rem:EnerEntr}
In the sequel, for $(t, \mathbf{x}, \mathbf{q})\in I \times \Oeta \times B$,  we shall refer to the following summand
\begin{align}
\label{fisherInformation}
 4k\,\varepsilon \int_0^t
 \int_{ \Omega_{\eta(\sigma)} \times B}
 M\Big\vert \nabx \sqrt{ \widehat{\psi}} \Big\vert^2
 \dq \dx\ds
+
 \frac{kA_0}{\lambda} \int_0^t
 \int_{\Omega_{\eta(\sigma)} \times B}
 M\Big\vert \nabq \sqrt{ \widehat{\psi} } \Big\vert^2
 \dq \dx\ds
\end{align}
as the  \textit{Fisher information}. Any estimate for the energy functional \eqref{energyFunctional} will be referred to as an \textit{energy estimate}. This will include the case where formally speaking, $\psi\equiv 0$. On the other hand, an estimate involving the relative entropy \eqref{relativeEntropyDefined} and any other function of $\psi$ such as the associated Fisher information \eqref{fisherInformation} and the polymer number density \eqref{eta} without any contribution from the solution $\bu$ of the fluid equation will be referred to as an \textit{entropy estimate}. 
\end{remark}
\subsection{Main result}
\label{subsecMainResult}
The main result of this paper is the following:
\begin{theorem}
\label{thm:main}
Let $(\bff, g, \vu_0, \eta_0, \eta_1, \widehat{\psi}_0)$ be a dataset satisfying \eqref{dataset}. Then there exists  a finite energy weak solution  $(\bu,\widehat{\psi}, \eta )$  of  \eqref{contEq}--\eqref{fokkerPlankIintial} and \eqref{shellEq}--\eqref{etaNablaEta} on the interval $I=(0,T)$ in the sense of Definition \ref{def:weakSolution}. The number $T$ is restricted only if $\lim_{t\rightarrow T}\|\eta(t)\|_{L^\infty(\omega)}=L$.
\end{theorem}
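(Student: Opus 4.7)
The plan is to follow a decoupling and regularization strategy, solving alternately for the fluid-structure pair and for the polymer density, and then constructing a fixed point. Concretely, I would introduce a small parameter $\varrho>0$ and pass to a regularized shell $\eta^\varrho$ (a spatial mollification of $\eta$) to ensure that the moving domain $\Omega_{\eta^\varrho(t)}$ has enough regularity so that the Hanzawa transform $\bm{\Psi}_{\eta^\varrho}$ and the trace operator from Lemma \ref{lem:2.28} behave uniformly; convective terms should also be mollified so that the decoupled linear sub-problems are well posed. For the polymer density equation one should further cut off the drift term $(\nabx \bu)\bq_i$ at level $1/\varrho$ so as to obtain a linear Fokker-Planck equation with bounded coefficients.

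On this regularized level, I would set up a Schauder/Tychonoff fixed point in the variable $\widehat{\psi}$: given a non-negative $\widehat{\psi}$ with the right integrability, the extra stress tensor $\mathbb{T}(\psi)$ from \eqref{extraStreeTensorx} acts as a prescribed force in the momentum equation, and I solve the fluid-structure subsystem $(\bu,\eta)$ following the fixed-point scheme of Lengeler--R\ocirc{u}\v{z}i\v{c}ka \cite{lengeler2014weak} extended to a nonlinear Koiter energy as in Muha--Schwarzacher \cite{muha2019existence}; this uses Galerkin approximation in the reference domain $\Omega$ after pulling back with $\bm{\Psi}_{\eta^\varrho}$, the kinematic coupling $\tr_\eta\bu=\partial_t\eta\,\bm{\nu}$, the $W^{2,2}$-coercivity of $K$ provided $\|\eta\|_{L^\infty}<L\leq \tilde L$ (recall \eqref{geomQuan}), and standard energy estimates. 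Then, with $\bu$ fixed, I solve the linear Fokker--Planck equation \eqref{fokkerPlankx}--\eqref{fokkerPlankIintialx} on $I\times\Omega_{\eta^\varrho(t)}\times B$ by a Galerkin scheme in both $\bx$ and $\bq$ (handling the weighted Maxwellian space $L^2_M(B)$); this is the content of Section \ref{sec:SolveFokkerPlanck} as announced. Non-negativity of $\widehat\psi$ follows from testing with $\widehat{\psi}_-$, and the basic entropy identity (test with $\mathcal F'(\widehat\psi)=1+\ln\widehat\psi$) provides compactness of the nonlinear map, giving a fixed-point solution $(\bu^\varrho,\widehat\psi^\varrho,\eta^\varrho)$.

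The main work is then to pass $\varrho\to 0$. The uniform bounds come from the formal energy identity leading to \eqref{energyEst}: the kinetic and Koiter contributions, the dissipation $\mu\|\nabx\bu\|^2$, and the Fisher information \eqref{fisherInformation} are controlled by the data and by $\|\bff\|_{L^2}$ and $\|g\|_{L^2}$. Crucially, I need the bound $\|\Xi\|_{L^\infty}$, obtained by a maximum principle for the viscous transport equation \eqref{transportXi} (uniform in $\varrho$ thanks to $\divx\bu=0$ and $\bu\cdot\bm{\nu}_{\eta(t)}$ being of the expected trace form), which in turn controls the $\eth\Xi^2\mathbb{I}$ stress term as well as part of $\mathbb{T}(\psi)$. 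With the convergence concept of Definition \ref{def:varConv} in hand and the uniform convergence $\eta^\varrho\to\eta$ in $\overline I\times\omega$ (obtained from the Aubin--Lions type embedding $W^{1,\infty}(I;L^2)\cap L^\infty(I;W^{2,2})\hookrightarrow\hookrightarrow C(\overline I\times\omega)$), one can pass to the limit in the linear terms; for the convective terms $\bu\otimes\bu$ and $M\bu\widehat\psi$ one uses a Aubin--Lions argument adapted to variable domains via the Hanzawa pull-back.

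The hardest step will be passing to the limit in the drag term $\divqi(M(\nabx\bu)\bq_i\widehat\psi)$ of \eqref{fokkerPlankx} and in the associated stress tensor $\mathbb{T}(\psi)$. Here $\nabx\bu$ has only weak compactness, while $\widehat\psi$ enjoys no compactness in $\bx$ beyond that of $\sqrt{\widehat\psi}$; a direct product does not pass to the limit. I would follow Masmoudi's strategy \cite{masmoudi2013global} adapted to the moving domain: use the entropy inequality (rigorously proved after truncating $\mathcal F$ and passing to the limit in the truncation, as mentioned in Remark \ref{rem:EnerEntr}) together with a defect-measure/propagation-of-strong-convergence argument to identify the weak limit of $\mathbb{T}(\psi^\varrho)$ with $\mathbb{T}(\psi)$. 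Finally, the maximal-time statement follows from a standard continuation argument: as long as $\|\eta(t)\|_{L^\infty(\omega)}<L$ the Hanzawa transform, trace operator and Koiter coercivity remain non-degenerate, so the solution can be continued, and the only obstruction is $\|\eta(t)\|_{L^\infty(\omega)}\to L$ as $t\to T$.
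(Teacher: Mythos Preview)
Your overall architecture---regularize, decouple into a fluid--structure block and a Fokker--Planck block, close by a fixed point, then let $\varrho\to 0$---is the paper's strategy. Two points, however, are either missing or misplaced.

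First, you do not address how to pass to the limit in the nonlinear Koiter term $K'(\eta^\varrho)$. The energy inequality gives $\eta^\varrho$ bounded in $L^\infty(I;W^{2,2}(\omega))\cap W^{1,\infty}(I;L^2(\omega))$, hence (as you note) compact in $C(\overline I\times\omega)$; but $K'$ is nonlinear in $\nabla^2_\by\eta$ and is only continuous on $W^{2,p}(\omega)$ for $p>2$, so weak $W^{2,2}$-convergence does not suffice. The paper closes this gap by proving an additional fractional estimate $\eta^\varrho\in L^2(I;W^{2+s,2}(\omega))$ uniformly in $\varrho$ for some $s>0$, obtained by testing the coupled momentum--shell identity with the pair built from fractional difference quotients $\Delta^s_{-h}\Delta^s_h\eta^\varrho$ via the divergence-free extension operator $\mathscr F_\eta$ of \cite{muha2019existence}; Aubin--Lions plus interpolation then yields strong $L^2(I;W^{2,p})$-convergence. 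Relatedly, at the regularized level the paper does more than mollify $\eta$ in the domain: it adds a higher-order penalty $\varrho\,\mathcal L'(\eta)$ (with $\mathcal L(\eta)=\int_\omega|\nabla_\by^5\eta|^2\dd\by$) to the shell equation and linearises $K'(\eta)$ to $K'(\xi)$, where $(\xi,\bv)$---not $\widehat\psi$---is the fixed-point variable (a set-valued Schauder theorem is used since uniqueness is unknown). Without the $\varrho\mathcal L'$ term the regularized shell has no a priori $W^{5,2}$-bound and the fixed-point map does not land in its domain.

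Second, invoking Masmoudi's propagation-of-compactness argument is unnecessary here and mislocates the difficulty: the model retains center-of-mass diffusion $\varepsilon>0$, so the Fokker--Planck equation is parabolic in $\bx$ and the Fisher information in \eqref{fisherInformation} already gives $\sqrt{\widehat\psi^\varrho}\in L^2(I;D^{1,2}(\Omega_{\eta^\varrho};L^2_M(B)))$ uniformly. Together with the entropy bound $\sup_t\int M\widehat\psi^\varrho\ln(\mathrm e+\widehat\psi^\varrho)\lesssim 1$ (equi-integrability) and a localised Aubin--Lions argument on cylinders $J\times\mathcal B_\bx\times\mathcal B_\bq\Subset I\times\Omega_\eta\times B$ (to avoid the moving boundary), one obtains a.e.\ convergence and then, by Vitali, $\widehat\psi^\varrho\to\widehat\psi$ strongly in $L^q(I\times\Omega_\eta;L^1_M(B))$ for all $q<\infty$. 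The drag term $M(\nabx\bu^\varrho)\bq_i\widehat\psi^\varrho$ and the stress $\mathbb T(M\widehat\psi^\varrho)$ then pass to the limit by a direct weak--strong product, with no defect-measure machinery required.
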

\begin{remark}\label{rem:2212} 
 We recall from the definition of $L$ in \eqref{eq:2212} and \eqref{geomQuan} that the Koiter energy does not degenerate and a self-intersection of the moving domain is excluded as long as $\|\eta(t)\|_{L^\infty(\omega)}$ stays strictly below $L$. If, however, we have
$\lim_{t\rightarrow T}\|\eta(t)\|_{L^\infty(\omega)}=L$ it may happen that the
 Koiter energy does
degenerate or that we do have a self-intersection of the shell at time $T$. In this case our existence scheme breaks.
\end{remark}

\section{Formal estimates for the energy and relative entropy}
\label{sec:energEsr}
In this section, we formally derive energy and entropy estimates assuming that we have a sufficiently regular solution  $(\bu,\widehat{\psi}, \eta )$  of  \eqref{contEq}--\eqref{fokkerPlankIintial} and \eqref{shellEq}--\eqref{etaNablaEta}.
Before we begin, we recall the Reynolds transport theorem which states that any vector (or scalar) $\bv:=\bv(t,\bx)$ of class $C^1_{t, \bx}$ satisfies
\begin{equation}
\begin{aligned}
\label{reynold1}
\frac{\dd}{\dt}\int_{\Oeta} \bv \dx 
= \int_{\Oeta} \partial_t \bv \dx
+
\int_{\Oeta} \divx(\bv\otimes \bu) \dx 
= \int_{\Oeta} \partial_t \bv \dx
+\int_{\partial\Oeta} (\bu\cdot \bm{\nu}_{\eta(t)})\bv \dy
\end{aligned}
\end{equation}
in $\Oeta$
where $\bu:=\bu(t,\bx)$ is the velocity of the fluid (If $\bv$ is a scalar $v$, we replace $\bv\otimes \bu$ with $v\bu $). For a solenoidal field $\bu$, the first equality explains the fact that the boundary of $\Oeta$, denoted by $\partial\Oeta$, moves with $\bu$. Note that such a movement is described by the material derivative $\partial_t +(\bu\cdot \nabx)$. The second equation is just Gauss' theorem. 
\\
Following the arguments in \cite[Section 2]{barrett2012finite} (for the simpler case of $K=1)$, let us recall $\mathcal{F}(s):= s \ln s+ \mathrm{e}^{-1}$ for $s>0$  so that $\mathcal{F}'(s)=1+\ln s$ and $\mathcal{F}''(s)=\frac{1}{s}$ are well-defined. We further recall from Section \ref{subsec:poly} that
\begin{equation}
\begin{aligned}
\label{maxwellianIdentity}
M \nabqi \frac{1}{M} = -\frac{1}{M} \nabqi M = \nabqi U\Big(\frac{1}{2}\vert \mathbf{q}_i \vert^2 \Big)  = U'\Big(\frac{1}{2}\vert \mathbf{q}_i \vert^2 \Big) \bq_i.
\end{aligned}
\end{equation}
Now, since our flow is incompressible,  by using Reynolds transport theorem \eqref{reynold1},
\begin{equation}
\begin{aligned}
\label{partialEtaEqu}
\int_{\Oeta \times B}\Big(\partial_t \psi +(\vu \cdot \nabx) \psi\Big) \, \mathcal{F}\,' \bigg( \frac{\psi}{M} \bigg) \dq \dx
&=
\int_{\Oeta \times B}\Big(\partial_t  +\vu \cdot \nabx\Big) \bigg[M\, \mathcal{F} \bigg( \frac{\psi}{M} \bigg)\bigg] \dq \dx
\\&=
\frac{\mathrm{d}}{\mathrm{d}t}
\int_{\Oeta \times B}M\, \mathcal{F} \bigg( \frac{\psi}{M} \bigg) \dq \dx.
\end{aligned}
\end{equation}
Next, since the identities
\begin{equation}
\begin{aligned}
\nonumber
\varepsilon \int_{\Oeta}
 \nabx \psi \cdot \nabx \mathcal{F}\,' \bigg( \frac{\psi}{M} \bigg)
 \dx
 &=
\varepsilon \int_{\Oeta }
 \nabx \psi \cdot \frac{M}{\psi}\nabx  \frac{\psi}{M} 
\dx
=\varepsilon
 \int_{\Oeta}
M \sqrt{\frac{M}{\psi}}\nabx \frac{\psi}{M}\cdot \sqrt{\frac{M}{\psi}}\nabx  \frac{\psi}{M} 
 \dx
 \\&  =
4\,\varepsilon
 \int_{\Oeta}
 M\Bigg\vert \nabx \sqrt{ \frac{\psi}{M} } \Bigg\vert^2
 \dx
\end{aligned}
\end{equation}
and
\begin{equation}
\begin{aligned}
\nonumber
 \varepsilon\int_{\Oeta}
&\divx\bigg[ \nabx \psi\, \mathcal{F}\,' \bigg( \frac{\psi}{M} \bigg)\bigg] \dx
=
  \int_{\partial\Oeta }
 \mathcal{F}\,' \bigg( \frac{\psi}{M} \bigg)\varepsilon \nabx \psi \cdot \bm{\nu}_{\eta(t)}
\dy_{\eta(t)}=0
\end{aligned}
\end{equation}
hold by virtue of the divergence theorem and \eqref{fokkerPlankBoundaryNeumann}, for the center-of-mass diffusion term, we have
\begin{equation}
\begin{aligned}
\varepsilon
\int_{\Oeta \times B}
 \Delx \psi\, \mathcal{F}\,' \bigg( \frac{\psi}{M} \bigg)
 \dq \dx
 &=
 -
 4\,\varepsilon
 \int_{\Oeta \times B}
 M\Bigg\vert \nabx \sqrt{ \frac{\psi}{M} } \Bigg\vert^2
 \dq \dx.
 \end{aligned}
\end{equation}
Now, for each $i=1, \ldots, K$, we can use the fact that $M=0$ on $\partial B$ (see \eqref{boundMiqi}) and the relation $s \nabq \mathcal{F}\,'(s) = \nabq s$ to obtain
\begin{align}
\nonumber
&\int_{\Oeta \times B}
 \divqi  \big( (\nabx   \mathbf{u}) \mathbf{q}_i\psi \big)  \, \mathcal{F}\,' \bigg( \frac{\psi}{M} \bigg)
 \dq \dx
 =
 -
 \int_{\Oeta \times B}
  (\nabx   \mathbf{u}) \mathbf{q}_iM\frac{\psi}{M} \cdot \nabqi  \mathcal{F}\,' \bigg( \frac{\psi}{M} \bigg)
 \dq \dx
\\ \nonumber
& = 
 -
 \int_{\Oeta \times B}
 (\nabx   \mathbf{u}) \mathbf{q}_i\psi \cdot\bigg[   \frac{M \nabqi  \psi - \psi \nabqi  M}{M^2} \bigg]\frac{M}{\psi}\,
 \dq \dx
 \\& =
 \int_{\Oeta \times B}
\divqi \big[ (\nabx   \mathbf{u}) \mathbf{q}_i \big]\psi
 \dq \dx
  +
 \int_{\Oeta \times B}
 (\nabx   \mathbf{u}) \mathbf{q}_i\psi \cdot  \frac{ 1 }{M}\nabqi  M\,
 \dq \dx  \label{referT0}.
\end{align}
However, one can check that the identity $\divqi \big[ (\nabx   \mathbf{u}) \mathbf{q}_i \big] = \divx \bu=0$ holds so that in combination with \eqref{maxwellianIdentity}, we can conclude that
\begin{align}
\sum_{i=1}^K
\int_{\Oeta \times B}
 &\divqi  \big( (\nabx   \mathbf{u}) \mathbf{q}_i\psi \big)   \mathcal{F}\,' \bigg( \frac{\psi}{M} \bigg)
 \dq \dx
 =
  -
  \sum_{i=1}^K
  \int_{\Oeta \times B}
\psi \, U'\Big(\frac{1}{2}\vert \mathbf{q}_i \vert^2 \Big) \bq_i \otimes \bq_i :\nabx   \mathbf{u}
 \dq \dx 
 \nonumber
 \\&
  =
 -
  \sum_{i=1}^K
  \int_{\Oeta}
\mathbb{T}_i(\psi) :\nabx   \mathbf{u}
  \dx
=
   -
\frac{1}{k}
  \int_{\Oeta}
\mathbb{T}(\psi) :\nabx   \mathbf{u}
  \dx  \label{referT}
\end{align}
holds by the use of \eqref{extraStreeTensor} and the observation that $\Xi^p\, \mathbb{I} : \nabx \bu =\Xi^p\, \divx \bu =0$ for $p=1,2$.
Finally, we can use that $M=0$ on $\partial B$ to obtain
\begin{equation}
\begin{aligned}
\label{aijEqu}
\sum_{i=1}^K \sum_{j=1}^K
\frac{A_{ij}}{4\lambda} \int_{\Oeta \times B}
&\divqi  \bigg( M \nabqj  \frac{\psi}{M} 
\bigg)\, \mathcal{F}\,' \bigg( \frac{\psi}{M} \bigg)
 \dq \dx
 \\&=
 -
\sum_{i=1}^K
\sum_{j=1}^K
\frac{A_{ij}}{4\lambda}  \int_{\Oeta \times B}
  M \nabqj  \frac{\psi}{M}  \cdot \frac{M}{\psi}\nabqi  \frac{\psi}{M} 
 \dq \dx
  \\& =
 -
 \sum_{i=1}^K
 \sum_{j=1}^K
\frac{A_{ij}}{\lambda} \,
 \int_{\Oeta \times B}
 M \nabqj \sqrt{ \frac{\psi}{M} }
 \cdot
   \nabqi \sqrt{ \frac{\psi}{M} }
 \dq \dx
\end{aligned}
\end{equation}
If we now collect \eqref{partialEtaEqu}--\eqref{aijEqu} and consider the smallest eigenvalue $A_0>0$ of $(A_{ij})_{i,j=1}^K$, then by integrating over the time interval $[0,t]$, we obtain from \eqref{fokkerPlank} that
\begin{equation}
\begin{aligned}
\label{fokkerEnerxyz}
\int_{\Oeta \times B}
&kM\, \mathcal{F} \bigg( \frac{\psi(t,\cdot) }{M}\bigg) \dq \dx
 +
 4k\,\varepsilon
 \int_0^t
 \int_{\Omega_{\eta(\sigma)} \times B}
 M\Bigg\vert \nabx \sqrt{ \frac{\psi}{M} } \Bigg\vert^2
 \dq \dx \ds
 \\&
 +
\frac{kA_0}{\lambda}  \int_0^t
 \int_{\Omega_{\eta(\sigma)} \times B}
 M\Bigg\vert \nabq \sqrt{ \frac{\psi}{M} } \Bigg\vert^2
 \dq \dx \ds
  \\&
\leq 
\int_{\Omega_{\eta_0} \times B}
kM\, \mathcal{F} \bigg( \frac{\psi_0 }{M}\bigg) \dq \dx
+
 \int_0^t
 \int_{\Omega_{\eta(\sigma)}}
\mathbb{T}(\psi) :\nabx \vu
  \dx \ds
\end{aligned}
\end{equation}
holds for all $t\in I$.
\\
If we also test \eqref{transportXi} with $\Xi$ and use Reynolds' transport theorem \eqref{reynold1} and the boundary condition \eqref{XiBoundaryNeumann}, we obtain
\begin{equation}
\begin{aligned}
\label{intPsiEner}
\int_{\Oeta}
 \vert \Xi(t,\cdot)\vert^2 
\dx
 +
\varepsilon\int_0^t
  \int_{\Omega_{\eta(\sigma)}}\vert \nabx \Xi \vert^2  \dx\ds
  =
  \int_{\Omega_{\eta_0}}
 \vert \Xi_0\vert^2 
\dx.
\end{aligned}
\end{equation}
On the other hand, if we test  \eqref{momEq}  with the velocity $\bu$, then we obtain 
\begin{equation}
\begin{aligned}
\label{fluidEner}
\int_{\Oeta}
\frac{1}{2} \vert \bu(t,\cdot) \vert^2 
\dx
&+\mu
 \int_0^t\int_{\Omega_{\eta(\sigma)} }
 \vert \nabx \bu \vert^2  \dx\ds
+
 \int_0^t\int_{\Omega_{\eta(\sigma)} }
\mathbb{T}(\psi) :\nabx   \mathbf{u}
  \dx\ds
\\&=\int_{\Omega_{\eta_0} }
 \frac{1}{2} \vert \bu_0 \vert^2 
\dx
+
 \int_0^t\int_{\Omega_{\eta(\sigma)} }
\bff \cdot\bu
 \, \dx\ds
 \\&
 +\int_0^t 
  \int_{\partial \Omega_{\eta(\sigma)} } 
\big( 2\mu \,\mathbb{D}_\by\bu 
 + \mathbb{T}(\psi)
 - p\mathbb I
\big) \bm{\nu}_{\eta(\sigma)}\cdot \bu
 \, \dy_{\eta(\sigma)}\ds.
\end{aligned}
\end{equation}
Finally, testing \eqref{shellEq} with  $\partial_t \eta$ yields
\begin{equation}
\begin{aligned}
\label{boundEner}
\int_\omega  \frac{1}{2} \vert\partial_t
\eta(t,\cdot) \vert^2 \dy 
&+  K(\eta(t))
= 
\int_\omega \frac{1}{2} \vert
\eta_1 \vert^2 \dy +  K(\eta_0)
\\&
+
\int_0^t\int_\omega g\, \partial_\sigma \eta \dy \ds +\int_0^t \int_\omega \mathbf{F}\cdot \bm{\nu}\, \partial_\sigma \eta \dy\ds.
\end{aligned}
\end{equation} 
If we now sum up \eqref{fokkerEnerxyz}--\eqref{boundEner}, then we obtain
\begin{equation}
\begin{aligned}
\bigg[
&\int_{\Omega_{\eta(\sigma)}}
\frac{1}{2} \vert \bu(\sigma,\cdot) \vert^2 
\dx
+
\int_{\Omega_{\eta(\sigma)}}
 \vert \Xi(\sigma,\cdot)\vert^2 
\dx
+
\int_\omega  \frac{1}{2} \vert\partial_\sigma
\eta(\sigma,\cdot) \vert^2 \dy
+K(\eta(\sigma))
\\&+
\int_{\Omega_{\eta(\sigma)}\times B}kM\mathcal{F} \bigg( \frac{\psi(\sigma,\cdot) }{M}\bigg) \dq \dx
 \bigg]_{\sigma=0}^{\sigma=t}
+
\mu
\int_0^t
\int_{\Omega_{\eta(\sigma)}}\vert \nabx \bu \vert^2  \dx\ds
  +
\varepsilon
\int_0^t
\int_{\Omega_{\eta(\sigma)}}\vert \nabx \Xi \vert^2  \dx\ds
 \\&+
 4k\,\varepsilon
\int_0^t
\int_{\Omega_{\eta(\sigma)} \times B}
 M\Bigg\vert \nabx \sqrt{ \frac{\psi}{M} } \Bigg\vert^2
 \dq \dx\ds
 +
 \frac{kA_0}{\lambda} \,
 \int_0^t
\int_{\Omega_{\eta(\sigma)} \times B}
 M\Bigg\vert \nabq \sqrt{ \frac{\psi}{M} } \Bigg\vert^2
 \dq \dx\ds
\leq 
I(t),
\end{aligned}
\end{equation}
where
\begin{equation}
\begin{aligned}
I(t)
&:= 
\int_0^t
\int_{\Omega_{\eta(\sigma)}}
\mathbf{f}\cdot\bu
 \dx\ds
+
\int_0^t
\int_\omega g\, \partial_\sigma \eta \dy \ds
\leq
\int_0^t
\int_{\Omega_{\eta(\sigma)}}
\frac{\vartheta}{2}
\vert \bu \vert^2 \dx\ds
\\&+
\int_0^t
\int_\omega \ \frac{\vartheta}{2} \vert \partial_\sigma \eta \vert^2 \dy\ds
+
\int_0^t
\int_{\Omega_{\eta(\sigma)}}
c_\vartheta
 \vert \bff \vert^2 \dx\ds
+
\int_0^t
\int_\omega c_\vartheta\vert g \vert^2 \dy\ds
\end{aligned}
\end{equation}
for any $\vartheta>0$.
The energy estimate  thus follow for all $t\in I$
by the use of \eqref{etaTzero}.

\subsection{Towards making the entropy estimate rigorous}
Since our ultimate goal is the construction of a non-negative probability function $\psi\geq 0$, even on the formal level, the derivation of the energy estimate above is still problematic. For example, $\mathcal{F}'(s)$ and $\mathcal{F}''(s)$ are singular at $s=0$ and thus, the a priori testing of the Fokker--Planck with the first derivative of the relative entropy functional is delicate. This problem can however be fixed by using a convex regularization $\mathcal{F}_\delta$ to approximate $\mathcal{F}$ and then passing to the limit $\delta\rightarrow0$. Following the arguments in \cite{bulicek2013existence}, for a fixed $\delta>0$, let us define $\mathcal{F}_\delta(s):= (s+\delta)\ln(s+\delta)+\mathrm{e}^{-1}$ for $s\geq0$  so that $\mathcal{F}_\delta'(s)=1+\ln(s+\delta)$ and $\mathcal{F}_\delta''(s)=\frac{1}{s+\delta}$ are well-defined even at $s=0$. 
\\
Now, because of the highly coupled nature of the macroscopic fluid system and the mesoscopic Fokker--Planck equation,  we also need to introduce a preliminary smoothing in order to ultimately maintain energy balance. In this respect, we follow \cite{bulicek2013existence} and consider a smooth nonnegative function $\Gamma$ supported on the interval $(-2,2)$ such that $\Gamma(s)=1$ for all $s\in [-1,1]$. Now for $\ell\in \mathbb{N}$, we set $\Gamma_\ell(s) := \Gamma(s/\ell)$ and define $T_\ell$ and $\Lambda_\ell$ by
\begin{align}
\label{tandlambdacutoff}
T_\ell(s) := \int_0^s \Gamma_\ell(r)\, \mathrm{d}r, \qquad \Lambda_\ell(s) := s\Gamma_\ell(s).
\end{align}
If we  further define 
\begin{align}
\label{conGamma}
T_{\delta, \ell}(s) :=\int_0^s\frac{\Lambda_\ell(r)}{r+\delta}\mathrm{d}r 
=
\int_0^s\frac{r\Gamma_\ell(r)}{r+\delta}\mathrm{d}r
\quad
\text{ with }\quad T_{\delta, \ell} \rightarrow T_\ell\quad \text{ in }\quad C([0,\infty)),
\end{align}
as $\delta \rightarrow 0$, then be setting $\widehat{\psi}: =\frac{\psi}{M}$, we observe that
\begin{align*}
\Lambda_\ell \big(\widehat{\psi} \big) \nabqi  \mathcal{F}'_\delta \big(\widehat{\psi} \big) = \nabqi T_{\delta,\ell} \big(\widehat{\psi} \big).
\end{align*}
Therefore, by using the fact that $M=0$ on $\partial B$ (see \eqref{boundMiqi}) and the identity $\divqi \big[ (\nabx   \mathbf{u}) \mathbf{q}_i \big] = \divx \bu=0$ together with \eqref{maxwellianIdentity}, we obtain
\begin{align}
\nonumber
\int_{\Oeta \times B}
 \divqi  \big( M(\nabx   \mathbf{u}) \mathbf{q}_i \Lambda_\ell \big( \widehat{\psi}\big) \big)  \, &\mathcal{F}'_\delta  \big(\widehat{\psi} \big)
 \dq \dx
 =
 -
 \int_{\Oeta \times B}
  M(\nabx   \mathbf{u}) \mathbf{q}_i\cdot \nabqi T_{\delta,\ell}\big(\widehat{\psi}\big)
 \dq \dx
 \\&
 =
-  
  \int_{\Oeta \times B}
M\, T_{\delta,\ell}\big(\widehat{\psi}\big) \, U'\Big(\frac{1}{2}\vert \mathbf{q}_i \vert^2 \Big) \bq_i \otimes \bq_i :\nabx   \mathbf{u}
 \dq \dx 
\label{referT0}.
\end{align}
for the corresponding `approximate' drag term in \eqref{fokkerPlankx}. By treating the rest of the terms in the modified Fokker--Planck equation \eqref{fokkerPlankx} similarly to the previous subsection, we obtain
\begin{equation}
\begin{aligned}
\label{fokkerEner0}
\frac{\mathrm{d}}{\mathrm{d}t}
\int_{\Oeta \times B}
&kM\, \mathcal{F}_\delta \big( \widehat{\psi} \big) \dq \dx
 +
k\,\varepsilon\int_{\Oeta \times B}
M\big\vert\nabx \widehat{\psi} \big\vert^2\big( \widehat{\psi} +\delta \big)^{-1} \dq \dx
 \\&
 +
 \sum_{i=1}^K
\sum_{j=1}^K
\frac{kA_{ij}}{4\lambda}  \int_{\Oeta \times B}
  M \nabqj  \widehat{\psi}  \cdot\nabqi \widehat{\psi} 
\big( \widehat{\psi} + \delta \big)^{-1}
 \dq \dx
\\&=
 k\,  \sum_{i=1}^K\int_{\Oeta \times B}
M\, T_{\delta,\ell}\big( \widehat{\psi} \big) \, U'\Big(\frac{1}{2}\vert \mathbf{q}_i \vert^2 \Big) \bq_i \otimes \bq_i :\nabx   \mathbf{u}
 \dq \dx .
\end{aligned}
\end{equation}
If we now consider the smallest eigenvalue $A_0>0$ of $(A_{ij})_{i,j=1}^K$, then by integrating \eqref{fokkerEner0} over the time interval $[0,t]$, we obtain
\begin{equation}
\begin{aligned}
\label{fokkerEnerA}
&\int_{\Oeta \times B}
kM\, \mathcal{F}_\delta \big( \widehat{\psi}(t,\cdot)\big) \dq \dx
 +
k\,\varepsilon \int_0^t\int_{\Omega_{\eta(\sigma)} \times B}
M\big\vert\nabx \widehat{\psi} \big\vert^2 \big(\widehat{\psi}+\delta \big)^{-1} \dq \dx\ds
 \\&
 +
\frac{kA_{0}}{4\lambda}\int_0^t  \int_{\Omega_{\eta(\sigma)}\times B}
  M \big\vert \nabq  \widehat{\psi}  \big\vert^2
\big(\widehat{\psi}+\delta \big)^{-1}
 \dq \dx\ds
 \leq
 \int_{\Omega_{\eta_0} \times B}
 kM\, \mathcal{F}_\delta \big( \widehat{\psi}_0 \big) \dq \dx
\\&+
 k\,\int_0^t\int_{\Omega_{\eta(\sigma)} \times B}
 \sum_{i=1}^K
M\, T_{\delta,\ell}\big(\widehat{\psi} \big) \, U'\Big(\frac{1}{2}\vert \mathbf{q}_i \vert^2 \Big) \bq_i \otimes \bq_i :\nabx   \mathbf{u}
 \dq \dx\ds.
\end{aligned}
\end{equation}
By recalling \eqref{extraStreeTensor} and the observation that $\Xi^p\, \mathbb{I} : \nabx \bu =\Xi^p\, \divx \bu =0$ for $p=1,2$, we can use \eqref{conGamma} to obtain
\begin{equation}
\begin{aligned}
\label{formalConvdelta}
\limsup_{\delta\rightarrow 0} k\,&\int_0^t\int_{\Omega_{\eta(\sigma)} \times B}
\sum_{i=1}^K
M\, T_{\delta,\ell}\big(\widehat{\psi} \big) \, U'\Big(\frac{1}{2}\vert \mathbf{q}_i \vert^2 \Big) \bq_i \otimes \bq_i :\nabx   \mathbf{u}
 \dq \dx\ds 
 \\&=
  k\,\int_0^t\int_{\Omega_{\eta(\sigma)} \times B}
 \sum_{i=1}^K
M\, T_\ell\big(\widehat{\psi}\big) \, U'\Big(\frac{1}{2}\vert \mathbf{q}_i \vert^2 \Big) \bq_i \otimes \bq_i :\nabx   \mathbf{u}
 \dq \dx\ds
  \\&=
  \int_0^t\int_{\Omega_{\eta(\sigma)} }
\mathbb{T}_\ell(M\widehat{\psi})  :\nabx   \mathbf{u}
  \dx\ds
\end{aligned}
\end{equation}
where for $\widehat{\psi}=\psi/M$,
\begin{align}
\label{extraStreeTensorxApprox}
\mathbb{T}_\ell(M\widehat{\psi}) 
=
k\sum_{i=1}^K  \int_BM \, \nabqi T_\ell(\widehat{\psi})\otimes\mathbf{q}_i \dq
 -
  k\,\Xi \,\mathbb{I}
-\eth \,\Xi^2 \mathbb{I}
\end{align}
is a truncated extra stress tensor in the fluid system.
The convergence in \eqref{formalConvdelta} together with the monotone convergence theorem and the identities
\begin{equation}
 \frac{M}{\widehat{\psi}}\big\vert\nabx  \widehat{\psi} \big\vert^2
  =
4
 M\big\vert \nabx \sqrt{ \widehat{\psi} } \big\vert^2, 
 \quad \quad
  \frac{M}{\widehat{\psi}}\big\vert\nabq  \widehat{\psi} \big\vert^2
  =
4
 M\big\vert \nabq \sqrt{\widehat{\psi} } \big\vert^2
\end{equation}
allows us to finally obtain in the inequality
\begin{equation}
\begin{aligned}
\label{fokkerEner}
&\int_{\Oeta \times B}
kM\, \mathcal{F}\big( \widehat{\psi}(t,\cdot) \big) \dq \dx
 +
4k\,\varepsilon \int_0^t\int_{\Omega_{\eta(\sigma)} \times B}
M\Big\vert \nabx \sqrt{ \widehat{\psi} } \Big\vert^2 \dq \dx\ds
 \\&
 +
\frac{kA_{0}}{\lambda}\int_0^t  \int_{\Omega_{\eta(\sigma)}\times B}
  M\Big\vert \nabq \sqrt{ \widehat{\psi} } \Big\vert^2
 \dq \dx\ds
 \leq
 \int_{\Omega_{\eta_0} \times B}
 kM\, \mathcal{F} \big( \widehat{\psi}_0 \big) \dq \dx
\\&+
 \int_0^t\int_{\Omega_{\eta(\sigma)} }
\mathbb{T}_\ell(M\widehat{\psi})  :\nabx   \mathbf{u}
  \dx\ds.
\end{aligned}
\end{equation}
by passing to limit $\delta \rightarrow 0$ in \eqref{fokkerEnerA}. The last term in \eqref{fokkerEner} above then balances with the corresponding term in the macroscopic system for the  solvent.

\section{Solving the Fokker--Planck equation}
\label{sec:SolveFokkerPlanck}
In this section we are going to to solve to Fokker-Planck equation for a given function $\xi: I\times\omega \rightarrow (-L,L)$ and a given vector field $\bv:I\times\Omega_\xi\rightarrow\R^3$ such that 
\begin{equation}
\begin{aligned}
\label{datasetReg0}
\xi \in  C^3(\overline{I} \times  \omega ), \qquad \bv\in L^\infty \big(I; W^{1,\infty}_{\divx}( \Omega_{\xi} ;\mathbb{R}^3) \big).
\end{aligned}
\end{equation}
Now subject to the following initial and boundary conditions
\begin{align}
&\bigg[\frac{1}{4\lambda}  \sum_{j=1}^K A_{ij}\, M \nabqj  \widehat{\psi} - M(\nabx   \bv) \bq_i\widehat{\psi}
 \bigg] \cdot \bn_i =0
&\quad \text{on }I \times \Omega_{\xi} \times \partial B_i, \quad\text{for } i=1, \ldots, K,
\label{fokkerPlankBoundaryGivenFluid0}
\\
&\varepsilon \naby \widehat{\psi} \cdot \bm{\nu}_{\xi} =\widehat{\psi}(\bv-(\partial_t\xi\bm{\nu})\circ\bm{\varphi}^{-1}_\xi)\cdot\bm{\nu}_\xi 
&\quad \text{on }  I \times \partial \Omega_{\xi(t)} \times B, \label{fokkerPlankBoundaryNeumannGivenFluid0}
\\
&\widehat{\psi}(0, \cdot, \cdot) =\widehat{\psi}_0 \geq 0
& \quad \text{in }\Omega_{\xi(0)} \times B,
\label{fokkerPlankIintialGivenFluid0}
\end{align}we aim to construct a weak solution to the Fokker--Planck equation 
\begin{align}
\label{fokkerPlankGivenVelo}
\partial_t (M\widehat{\psi}) + (\bv\cdot \nabx) M\widehat{\psi}
=
\varepsilon \Delx (M\widehat{\psi})
- \sum_{i=1}^K
 \divqi  \big( M(\nabx   \bv) \bq_i\widehat{\psi} \big) 
 +
\frac{1}{4\lambda} \sum_{i=1}^K \sum_{j=1}^K A_{ij}\,  \divqi  \big( M \nabqj  \widehat{\psi} 
\big)
\end{align}
in $I \times \Omega_{\xi(t)} \times B$. Here  $\varepsilon>0$  is the center-of-mass diffusion coefficient, $\lambda > 0$ is the \textit{Deborah number} $\mathrm{De}$ and the $A_{ij}$'s are the components of the symmetric positive definite \textit{Rouse matrix} $(A_{ij})_{i,j=1}^K$ whose smallest eigenvalue is $A_0>0$. The Maxwellian $M$ satifies \eqref{maxwellianPartial}--\eqref{boundMiqi} and $\widehat{\psi}=\widehat{\psi}(t,\bx,\bq)$ is the unknown. \\
We now make precise, what we mean by $\widehat{\psi}$ being a solution of \eqref{fokkerPlankBoundaryGivenFluid0}--\eqref{fokkerPlankGivenVelo}.
\begin{definition} \label{def:RegweakSolution}
Take $(\bv, \xi, \widehat{\psi}_0)$ as a dataset such that $(\bv, \xi )$ satisfies \eqref{datasetReg0} and
\begin{equation}
\begin{aligned}
\label{datasetReg}
\widehat{\psi}_0 \in L^2\big( \Omega_{\xi(0)}; L^2_M(B) \big), \quad \widehat{\psi}_0\geq 0.
\end{aligned}
\end{equation}
In addition, we assume
\begin{align}
\label{XI0}
\Xi_0\in L^\infty(\Omega_{\xi(0)}) \quad \text{where} \quad \Xi_0=\int_B M(\bq)\widehat{\psi}_0(\cdot,\bq) \dq \quad\text{in}\quad \Omega_{\xi(0)}.
\end{align}
We call $\widehat{\psi}$ a finite energy weak solution of  \eqref{fokkerPlankBoundaryGivenFluid0}--\eqref{fokkerPlankGivenVelo} with data $(\bv, \xi, \widehat{\psi}_0)$ provided that:
\begin{itemize}
\item[(a)] the probability density function $\widehat{\psi}$ satisfies
\begin{align*}
&\widehat{\psi}\geq 0 \text{ a.e. in }  I \times \Omega_{\xi(t)}\times B,
\\ 
&
\widehat{\psi} \in L^\infty \big( I \times \Omega_{\xi(t)} ; L^1_M(B) \big),
\\&\mathcal{F}(\widehat{\psi} ) \in L^\infty \big( I; L^1(\Omega_{\xi(t)}; L^1_M(B))\big),
\\&
\sqrt{\widehat{\psi}} \in  L^2 \big( I; L^2(\Omega_{\xi(t)} ; W^{1,2}_M(B))\big) \cap L^2 \big( I; D^{1,2}(\Omega_{\xi(t)} ; L^2_M(B))\big),
\\&
 \Xi(t,\bx) = \int_BM \widehat{\psi}(t, \bx, \bq) \dq \in L^\infty\big(I \times \Omega_{\xi(t)}\big) \cap L^2\big(I; W^{1,2}(\Omega_{\xi(t)}) \big);
\end{align*}
\item[(b)] for all  $\varphi \in C^\infty (\overline{I}\times \R^3 \times \overline{B})$, we have
\begin{equation}
\begin{aligned}
\int_I  \frac{\mathrm{d}}{\dt} \int_{\Omega_{\xi(t)} \times B}M \widehat{\psi} \, \varphi &\dq \dx \dt = \int_{I \times \Omega_{\xi(t)} \times B}\big(M \widehat{\psi} \,\partial_t \varphi 
+
M\bv  \widehat{\psi} \cdot \nabx \varphi -
\varepsilon M\nabx \widehat{\psi} \cdot \nabx \varphi \big) \dq \dx \dt
\\&
+\sum_{i=1}^K \int_{I \times \Omega_{\xi(t)} \times B}
 \bigg( M(\nabx   \bv ) \mathbf{q}_i\widehat{\psi}-
\sum_{j=1}^K \frac{A_{ij}}{4\lambda}   M \nabqj  \widehat{\psi} \bigg) \cdot \nabqi\varphi \dq \dx \dt;
\label{weakFPlimit0}
\end{aligned}
\end{equation}
\item[(c)] we have the estimate
\begin{equation}
\begin{aligned}
\label{energyEstKappaNeg1}
&
\Vert \Xi(t,\cdot) \Vert^2_{L^\infty(\Omega_{\xi(t)})}
 +
k \int_{\Omega_{\xi(t)} \times B} 
M\, \mathcal{F} \big( \widehat{\psi}(t, \cdot, \cdot) \big)
\dq \dx
+
\varepsilon \int_0^t
 \int_{\Omega_{\xi(\sigma)}}\vert \nabx \Xi \vert^2 \dx\ds
\\&
+
 4k\,\varepsilon \int_0^t
 \int_{ \Omega_{\xi(\sigma)} \times B}
 M\Big\vert \nabx \sqrt{ \widehat{\psi}} \Big\vert^2
 \dq \dx\ds
+
 \frac{kA_0}{\lambda} \int_0^t
 \int_{\Omega_{\xi(\sigma)} \times B}
 M\Big\vert \nabq \sqrt{ \widehat{\psi} } \Big\vert^2
 \dq \dx\ds
 \\&
 \lesssim 
\Vert \Xi_0  \Vert^2_{L^\infty(\Omega_{\xi(0)})}
+
k \int_{\Omega_{\xi(0)}\times B} 
M\, \mathcal{F} \big( \widehat{\psi}_0 \big)
\dq \dx  
+
\int_0^t  \int_{\Omega_{\xi(\sigma)} }
\mathbb{T}(M\widehat{\psi}) :\nabx \bv
  \dx\ds 
\end{aligned}
\end{equation}
for all $t\in I$.
\end{itemize}
\end{definition}
The main result of this section is the following:
\begin{theorem}
\label{thm:kappaReg} 
Assume that $(\bv, \xi, \widehat{\psi}_0)$ satisfies \eqref{datasetReg0} and \eqref{datasetReg}--\eqref{XI0}.
Then there exists a finite energy weak solution of  \eqref{fokkerPlankBoundaryGivenFluid0}--\eqref{fokkerPlankGivenVelo} in the sense of Definition \ref{def:RegweakSolution}.
\end{theorem}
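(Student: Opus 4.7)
\medskip
\noindent\textbf{Proof proposal.}
My plan is to reduce to a fixed reference domain via the Hanzawa transform, build approximate solutions by a time-discretization, derive uniform a priori estimates, and pass to the limit. Since $\xi \in C^3(\overline{I} \times \omega)$, the map $\bm{\Psi}_\xi$ of Section \ref{subsec:shell} is a $C^3$-diffeomorphism with uniformly controlled Jacobian, so pulling \eqref{fokkerPlankGivenVelo} back to $\Omega \times B$ yields a parabolic equation with smooth, time-dependent coefficients. The convective correction produced by $\partial_t \bm{\Psi}_\xi$ combines with the inhomogeneous Neumann condition \eqref{fokkerPlankBoundaryNeumannGivenFluid0} to give a clean homogeneous Neumann problem in the reference variables, since $\partial_t\xi\,\bm{\nu} \circ \bm{\varphi}_\xi^{-1}$ precisely encodes the normal velocity of the moving boundary.

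To construct approximations I would use a Rothe time-discretization rather than a straight Galerkin scheme, because at each step it reduces to a linear elliptic problem in $(\bx,\bq)$ with nonnegative source. Coercivity of the pulled-back bilinear form on the weighted Sobolev space (in which $M=0$ on $\partial B$ enforces the natural flux condition \eqref{fokkerPlankBoundaryGivenFluid0}) gives solvability via Lax--Milgram at each step, and the weak maximum principle (tested with the negative part) gives $\widehat{\psi}_n \ge 0$. Integrating the discrete Fokker--Planck equation in $\bq$ and using \eqref{fokkerPlankBoundaryGivenFluid0}, the discrete $\Xi_n$ solves a transport--diffusion equation for which the weak maximum principle on the moving domain yields $\|\Xi_n\|_{L^\infty(\Omega_{\xi(t)})} \le \|\Xi_0\|_{L^\infty(\Omega_{\xi(0)})}$, uniformly in the time step.

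The three remaining a priori bounds of Definition \ref{def:RegweakSolution} come from mimicking Section \ref{sec:energEsr}. Testing with $\widehat{\psi}$ itself produces the $L^2_M$ energy estimate and, via Reynolds' transport identity \eqref{reynold1}, absorbs the moving-boundary contribution thanks to the matching of the normal velocity in \eqref{fokkerPlankBoundaryNeumannGivenFluid0}. The entropy bound is obtained exactly by the $(\delta,\ell)$-regularization already spelled out in Section \ref{sec:energEsr}: test with $\mathcal{F}'_\delta(\widehat{\psi})\,\Gamma_\ell(\widehat{\psi})$, pass $\delta \to 0$ by monotone convergence (to legitimize $\mathcal{F}'$ at $\widehat{\psi}=0$), then pass $\ell \to \infty$ using the uniform $L^\infty$-bound on $\Xi$ to control the truncated stress term $\mathbb{T}_\ell(M\widehat{\psi}):\nabx \bv$ — this is exactly where the hypothesis $\bv \in L^\infty(I;W^{1,\infty})$ of \eqref{datasetReg0} is essential. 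The two Fisher-information terms drop out of the same computation.

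Finally I would pass to the Rothe limit by an Aubin--Lions argument: the Fisher information provides spatial compactness for $M\widehat{\psi}$ and $\sqrt{\widehat{\psi}}$, while rearranging the equation yields a uniform bound on $\partial_t(M\widehat{\psi})$ in a negative Sobolev space, giving strong $L^2_{\mathrm{loc}}$ convergence of $M\widehat{\psi}$; the $L^\infty$-bound on $\Xi$ and the convex entropy functional survive the limit by lower semicontinuity. The hardest part will be disentangling the three nested regularizations (time step, then $\delta$, then $\ell$) while keeping the entropy identity from picking up spurious boundary terms under the moving-domain pullback — careful bookkeeping of the Reynolds' interface correction at each regularization level, together with verifying that the $L^\infty(\Xi)$-bound is preserved through every intermediate limit so that the $\ell\to\infty$ step actually closes, is the technical heart of the argument.
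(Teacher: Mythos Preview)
Your approach is sound but takes a genuinely different route from the paper. The paper stays in the moving domain throughout and constructs approximations via a three-layer regularization: a spatial Galerkin scheme (index $n$), an approximation $M^m=\overline{M}^m+\tfrac{1}{m}$ of the Maxwellian that is bounded away from zero, and a truncation $\Lambda_\ell$ of the drag term. The limits are taken in the order $n\to\infty$, $m\to\infty$, $\ell\to\infty$. The reason for $M^m$ is to avoid working directly in the degenerate weighted space $W^{1,2}_M(B)$; the reason for $\Lambda_\ell$ is that the $L^2_{M^m}$ energy bound degenerates as $m\to\infty$, so the stress on the right of the entropy inequality must be controlled a priori (Corollary~\ref{cor:polyNumDenKappa}). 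Compactness at the $n$-level is handled by an ad hoc argument in the spirit of \cite{lengeler2014weak}, since Aubin--Lions does not apply directly on a moving domain; at the $m$- and $\ell$-levels the equation is localised on compactly contained cylinders $\mathcal Q_k\Subset I\times\Omega_\xi\times B$ before invoking Aubin--Lions.

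Your pullback to the reference domain and Rothe discretization buy you standard Aubin--Lions compactness and, because the equation stays linear, a uniform $L^2_M$ bound on $\widehat\psi$ across all time steps; this means the stress term $\mathbb{T}(M\widehat\psi):\nabx\bv$ on the right of the entropy estimate is directly controlled and the $\ell$-truncation is in fact unnecessary in your scheme. Two points to sharpen: first, your entropy test function should be $\mathcal{F}_\delta'(\widehat\psi)$ alone---in Section~\ref{sec:energEsr} the factor $\Gamma_\ell$ enters through the \emph{equation} (via $\Lambda_\ell$ in the drag term), not through the test function, and in your linear setting you do not need it at all. Second, by foregoing the $M^m$-regularization you must carry out Lax--Milgram directly in the degenerate weighted space where $M$ vanishes on $\partial B$; this is doable (cf.\ the Barrett--S\"uli framework) but requires more care about density and admissibility of test functions than your proposal indicates. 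The paper's route, while heavier, produces machinery (the $\Lambda_\ell$-layer and moving-domain compactness) that is reused verbatim in Sections~\ref{sec:reg}--\ref{sec:main}, where the domain is genuinely unknown and cannot be frozen by a pullback.
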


We now devote the rest of this section to the proof of Theorem \ref{thm:kappaReg}. As is usually the case, we begin by establishing an approximate solution of the Fokker--Planck equation \eqref{fokkerPlankGivenVelo}.

\subsection{A Galerkin scheme}
We let $(\bv, \xi,  \widehat{\psi}_0)$ be a given dataset satisfying \eqref{datasetReg0} and \eqref{datasetReg}--\eqref{XI0}.
Our solution $\widehat{\psi}$ to \eqref{fokkerPlankGivenVelo} will be derived as a limit $n\rightarrow \infty$, $m\rightarrow \infty$, $\ell \rightarrow \infty$ (in that order) of a regular solution $\widehat{\psi}^{\ell,m,n}$ where $n\in\mathbb{N}$ is a Galerkin parameter and  $m\in \mathbb{N}$ and $\ell \in \mathbb{N}$ are parameters indexing a family of approximate Maxwellians and extra stress tensors respectively.
\begin{remark}
\label{rem:noL}
Before we begin, we remark that henceforth, for a given pair $(\bv, \xi)$ and fixed $\ell\in \mathbb{N}$, we simply write $\widehat{\psi}^{m,n}$ in place of $(\widehat{\psi}^{\ell,m,n})_{\ell\in \mathbb{N}}$ until  when we have to pass to the limit $\ell \rightarrow \infty$. We also recall the definition of $T_\ell$ and $\Lambda_\ell$ in \eqref{tandlambdacutoff}. 
\end{remark}
Now to begin, we first consider the following approximate form of the Maxwellian in order to avoid potential blow-up. Following \cite[Section 2.1]{bulicek2013existence}, we approximate $M=M(\bq)$ by $M^m=M^m(\bq)$ given by
\begin{align}
M^m := \overline{M}^m +\frac{1}{m}, \quad m\in \mathbb{N}
\end{align}
where $\overline{M}^m=\overline{M}^m(\bq)$ is such that
\begin{align}\label{eq:Mm}
\lim_{m\rightarrow \infty}\bigg\{\big\Vert \overline{M}^m -M \big\Vert_{C(\overline{B}) \cap W^{1,1}_0(B)}
+
\big\Vert (\overline{M}^m)^{-1} -M^{-1} \big\Vert_{C(\mathcal{B}_\bq) }
\bigg\}=0
\end{align}
for any compact set $\mathcal{B}_\bq \subset B$. Next, for each $m\in \mathbb{N}$, we consider a countable family 
$(\phi^m_i)_{i\in \mathbb{N}}$ of eigenfunctions in $W^{1,2}(B)$ that are orthogonal in $W^{1,2}_{M^m}(B)$ and orthonormal in $L^{2}_{M^m}(B)$.
We also consider a smooth basis $( \varpi_j)_{j\in\mathbb{N}}$ of the space $W^{1,2}(\R^3)$ along 
restriction to $\Omega_\xi$ so that $( \varpi_j  )_{j\in\mathbb{N}}$ is  a basis  of the space $W^{1,2}(\Omega_{\xi(t)} )$ for any $t\in I$ (this, in fact, follows form the existence of an extension operator to $\R^3$ since $\xi(t)$ is smooth). 
For fixed $m,n\in \mathbb{N}$, our aim now is to look for a function
\begin{align}
\label{spannedBasis}
\widehat{\psi}^{m,n}(t,\bx,\bq)=\sum_{r=1}^n\sum_{l=1}^n f^{m,n}_{rl}(t)\phi^m_r(\bq) \varpi_l(\bx),
\end{align}
where the $f^{m,n}_{rl}$'s are the actual unknowns, such that for all $r = 1, \dots, n$ and $l=1,\dots,n$,
\begin{equation}
\begin{aligned}
\frac{\mathrm{d}}{\mathrm{d}t}&
\int_{\Omega_{\xi(t)} \times B } M^m\widehat{\psi}^{m,n} \phi^m_r \varpi_l\dq \dx
\\&
=
\int_{\Omega_{\xi(t)}  \times B }    \bv\,M^m  \widehat{\psi}^{m,n} \cdot \nabx( \phi^m_r \varpi_l) \dq\dx
-
 \varepsilon 
\int_{\Omega_{\xi(t)}  \times B } M^m \nabx \widehat{\psi}^{m,n}\cdot \nabx( \phi^m_r \varpi_l) \dq\dx
\\&
+
\sum_{i=1}^K
 \int_{\Omega_{\xi(t)}  \times B }   M(\nabx   \bv) \bq_i \Lambda_\ell(\widehat{\psi}^{m,n})   \cdot \nabqj( \phi^m_r \varpi_l) \dq\dx 
\\&
-
\sum_{i=1}^K \sum_{j=1}^K \frac{A_{ij}}{4\lambda} \int_{\Omega_{\xi(t)}  \times B } M^m \nabqi \widehat{\psi}^{m,n}
  \cdot \nabqj( \phi^m_r \varpi_l) \dq\dx
\label{eq:FPn}
\end{aligned}
\end{equation}
holds subject to the initial condition
\begin{equation}
\begin{aligned}
\widehat{\psi}^{m,n}(0, \cdot, \cdot) &=\widehat{\psi}_0^{m,n}:= 
\sum_{r=1}^n\sum_{l=1}^n\bigg(
\int_{\Omega_{\xi(0)} \times B } T_\ell(\widehat{\psi}^{m}_0) \phi^m_r \varpi_l \dq \dx \bigg)\phi^m_r(\bq) \varpi_l (\bx)
,
\\&
\widehat{\psi}^m_0
:=
\widehat{\psi}_0 \frac{M}{M^m}
 \quad \text{in} \quad\Omega_{\xi(0)} \times B.
\end{aligned}
\end{equation}
Equation \eqref{eq:FPn} is a system of ODEs with continuous coefficients which can be solved locally in time.  We start by showing uniform a priori estimates for $\widehat{\psi}^{m,n}$ which will imply global solvability of \eqref{eq:FPn}.
To obtain these estimates, we will take $\widehat{\psi}^{m,n}$ as test function in \eqref{eq:FPn}. Using Reynolds' transport theorem \eqref{reynold1} we obtain  
\begin{align*}
\frac{\mathrm{d}}{\mathrm{d}t}
\int_{\Omega_{\xi(t)} \times B } &M^m\vert\widehat{\psi}^{m,n}\vert^2 \dq \dx
-
\int_{\Omega_{\xi(t)} \times B }M^m \widehat{\psi}^{m,n} \partial_t\widehat{\psi}^{m,n} \dq \dx
\\&=
\frac{1}{2}
\frac{\mathrm{d}}{\mathrm{d}t} \int_{\Omega_{\xi(t)} \times B }M^m\vert\widehat{\psi}^{m,n}\vert^2  \dq \dx 
+\frac{1}{2}
\int_{\partial\Omega_{\xi(t)} \times B} M^m\vert\widehat{\psi}^{m,n}\vert^2 (\partial_t\xi \bm{\nu})\circ \bm{\varphi}_\xi^{-1}\cdot\bm{\nu}_\xi\dq \dy,
\end{align*}
whereas the divergence theorem yields
\begin{align*}
\int_{\Omega_{\xi(t)}  \times B }  \bv\,M^m  \widehat{\psi}^{m,n} \cdot \nabx \widehat{\psi}^{m,n}  &\dq\dx
=
\frac{1}{2}
\int_{\partial\Omega_{\xi(t)} \times B} M^m\vert\widehat{\psi}^{m,n}\vert^2 (\bv \cdot \bm{\nu}_{\xi})\dq \dy
\end{align*}
since $\Div\bv=0$.
By treating the rest of the terms in \eqref{eq:FPn} in a straightforward manner
and integrating 
over the interval $(0,t)$, we obtain
\begin{align}
\frac{1}{2}\int_{\Omega_{\xi(\sigma)} \times B}M^m\vert\widehat{\psi}^{m,n}(\sigma)\vert^2\dq \dx \bigg]_{\sigma=0}^{\sigma=t}     
&+
\sum_{i=1}^K \sum_{j=1}^K \frac{A_{ij}}{4\lambda}
\int_0^t \int_{\Omega_{\xi(\sigma)} \times B} M^m \nabqi \widehat{\psi}^{m,n} \cdot \nabqj \widehat{\psi}^{m,n}  \dq  \dx \, \mathrm{d}\sigma\nonumber
\\ 
+
\varepsilon
\int_0^t
\int_{\Omega_{\xi(\sigma)} \times B} M^m \vert \nabx \widehat{\psi}^{m,n} \vert^2 \dq  \dx \, \mathrm{d}\sigma
&=
\sum_{i=1}^K
\int_0^t
\int_{\Omega_{\xi(\sigma)} \times B}M^m (\nabx   \bv) \bq_i \Lambda_\ell(\widehat{\psi}^{m,n})   \cdot \nabqi \widehat{\psi}^{m,n}\dq \dx \, \mathrm{d}\sigma\nonumber
\\&
+
\frac{1}{2}
\int_0^t
\int_{\partial\Omega_{\xi(\sigma)} \times B} M^m\vert\widehat{\psi}^{m,n}\vert^2 (\bv \cdot \bm{\nu}_{\xi})\dq \dy \, \mathrm{d}\sigma\nonumber \\
&-\frac{1}{2}
\int_0^t
\int_{\partial\Omega_{\xi(\sigma)} \times B}  M^m\vert\widehat{\psi}^{m,n}\vert^2 (\partial_\sigma\xi \bm{\nu})\circ \bm{\varphi}_\xi^{-1}\cdot\bm{\nu}_\xi\dq \dy \ds \nonumber\\
&= :(I)^{m,n}+(II)^{m,n}+(III)^{m,n}
\label{distriFormPsiApprZ0}
\end{align}
for all $t\in I$. As a consequence of Young's inequality, the definitions of $\Lambda_\ell$ and $M^m$,  boundedness of $ \nabx  \bv $ and the boundedness of $\vert \bq_i\vert^2$ which holds for each $i=1, \ldots, K$, we obtain the estimate
\begin{align*}
(I)^{m,n}
&\leq
\delta \int_0^t
\int_{\Omega_{\xi(\sigma)} \times B}  M^m
\vert
\nabq \widehat{\psi}^{m,n} \vert^2 \dq  \dx \, \mathrm{d}\sigma
+
c_\delta
\int_0^t
\int_{\Omega_{\xi(\sigma)} \times B} M^m  \vert \widehat{\psi}^{m,n} \vert^2 \dq \dx\, \mathrm{d}\sigma
\end{align*}
uniformly in $m,n\in \mathbb{N}$ for any $\delta>0$ recalling the definition of $\Lambda_\ell$.
 In order to estimate $(II)^{m,n}$, we use the trace embedding $W^{1/2,2}(\Omega_{\xi(t)})\hookrightarrow L^2(\partial \Omega_{\xi(t)})$. The constant in this embedding depends on the $W^{1,\infty}_{\bx}$-norm of $\xi$. 
Employing also an interpolation argument and using the regularity of $\bfv$ again, we obtain 
\begin{align*}
(II)^{m,n}
&\lesssim
\int_0^t\int_BM^m
\|\widehat{\psi}^{m,n}\|^2_{L^2(\partial\Omega_{\xi(\sigma)})}  \dq\,\dd\sigma\lesssim \int_0^t\int_BM^m
\|\widehat{\psi}^{m,n}\|^2_{W^{1/2,2}(\Omega_{\xi(\sigma)})}  \dq\,\dd\sigma
 \\
&\leq 
\delta \int_0^t
\int_{\Omega_{\xi(\sigma)} \times B}  M^m
\vert
\nabx \widehat{\psi}^{m,n} \vert^2 \dq  \dx \, \mathrm{d}\sigma
+
c_\delta
\int_0^t
\int_{\Omega_{\xi(\sigma)} \times B} M^m  \vert \widehat{\psi}^{m,n} \vert^2 \dq \dx\, \mathrm{d}\sigma
\end{align*}
uniformly in $m,n\in \mathbb{N}$ for any $\delta>0$. By employing the regularity of $\xi$ in place of $\bv$, we can derive the same estimate for $(III)^{m,n}$. 
If we now consider the smallest eigenvalue $A_0>0$ of $(A_{ij})_{i,j=1}^K$, absorb the small $\delta$-terms into the corresponding terms on the left-hand side of \eqref{distriFormPsiApprZ0} and apply Gronwall's lemma, then we obtain
\begin{align}\label{suptx3Z}
\begin{aligned}
\int_{\Omega_{\xi(t)} \times B} & M^m
\vert
 \widehat{\psi}^{m,n} \vert^2 \dq \dx
+
\int_0^t
\int_{\Omega_{\xi(\sigma)} \times B}  M^m
\vert
\nabx \widehat{\psi}^{m,n} \vert^2 \dq  \dx\,\mathrm{d}\sigma
\\
&+
\int_0^t
\int_{\Omega_{\xi(\sigma)} \times B}  M^m
\vert
\nabq \widehat{\psi}^{m,n} \vert^2 \dq  \dx \,\mathrm{d}\sigma
\lesssim
\int_{\Omega_{\xi(0)} \times B}  M^m
\vert
 \widehat{\psi}^{m,n}_0 \vert^2\dq \dx
\end{aligned}
\end{align}
uniformly in $m,n\in \mathbb{N}$ for all $t\in I$
with a constant that depends only on $A_0$, $\varepsilon>0$, $\bv$ and $\xi$. Note that \eqref{suptx3Z} implies that there is a global-in-time solution to \eqref{eq:FPn}. Furthermore, we can use the fact that $\frac{1}{m}\leq M^m \leq c$, where $c>0$ is a constant independent of $m$ to conclude from \eqref{suptx3Z} that 
\begin{align}\label{suptx3Zx}
\begin{aligned}
\sup_{t\in I}\int_{\Omega_{\xi(t)} \times B} & 
\vert
 \widehat{\psi}^{m,n} \vert^2 \dq \dx
+
\int_I
\int_{\Omega_{\xi(t)} \times B}  
\vert
\nabx \widehat{\psi}^{m,n} \vert^2 \dq  \dx \dt
\\
&+
\int_I
\int_{\Omega_{\xi(t)} \times B}  
\vert
\nabq \widehat{\psi}^{m,n} \vert^2 \dq  \dx \dt
\lesssim_m 1
\end{aligned}
\end{align}
where the constant additionally depends only on $A_0$, $\varepsilon>0$, $\bv$ and $\xi$.

\subsection{Regularity and identification of the first limits}
To begin this subsection, we first note that since the estimate \eqref{suptx3Zx} holds uniformly in $n\in \mathbb{N}$,  it follows that there exist 
\begin{align}
\label{psiLimReg1}
\widehat{\psi}^m \in L^\infty(I; L^2( \Omega_{\xi(t)} ; L^2( B)))
\cap
L^2 \big( I; D^{1,2}( \Omega_{\xi(t)}  ; L^2(B))\big)
\cap
L^2 \big( I; L^2( \Omega_{\xi(t)}  ; W^{1,2}(B))\big) 
\end{align}
such that $\widehat{\psi}^{m,n}$ converges weakly-$(*)$ to $\widehat{\psi}^m$ as $n\rightarrow \infty$. The limit $\widehat{\psi}^m$
satisfies both \eqref{suptx3Z} and \eqref{suptx3Zx}.
Furthermore, 
since $\frac{3}{5} = 3\left( \frac{1}{2} - \frac{3}{10}\right)$, it follows from the Gagliardo--Nirenberg interpolation inequality that for all $t\in I$, the estimate
\begin{align*}
\Vert \widehat{\psi}^m \Vert_{L^\frac{10}{3}(I \times \Omega_{\xi(t)};L^2(B) )}
\lesssim
\Vert \widehat{\psi}^m \Vert_{L^\infty(I;L^2(\Omega_{\xi(t)};L^2(B)))}^{2/5}
\Vert \widehat{\psi}^m \Vert_{L^2(I;D^{1,2}(\Omega_{\xi(t)};L^2(B))) }^{3/5}
\end{align*}
holds for a constant depending only on $\xi$. Thus, given \eqref{psiLimReg1}, it follows that
\begin{align}
\label{psiLimReg2}
\widehat{\psi}^m \in L^\frac{10}{3}\big(I \times \Omega_{\xi(t)};L^2(B)\big) .
\end{align}
Although the original equation \eqref{fokkerPlankGivenVelo} is linear in $\psi$, the regularised equation (which we approximate by \eqref{eq:FPn}) is not due to the modified drag-term.
Hence, we need compactness to pass to the limit. On account of the moving shell, the standard argument
based on Aubin--Lions lemma does not apply. Also, we cannot localise the equation on the Galerkin level (this will be done later in Sections \ref{subsec:2nd} and \ref{subsec:comp}). Hence, we modify the compactness approach from 
\cite{lengeler2014weak} for our purposes
and prove that
\begin{align}\label{eq:312N}
\begin{aligned}
\int_{I\times \Omega_{\xi(t)} \times B}&M^m|\widehat\psi^{m,n}|^2\dq\dx\dt
&\longrightarrow \int_{I\times \Omega_{\xi(t)} \times B} &M^m|\widehat\psi^{m}|^2\dq\dx\dt
\end{aligned}
\end{align}
as $n\rightarrow\infty$.
In order to do so, we consider for $\varphi\in W^{1,2}(\R^3\times B)$ with $\|\varphi\|_{W^{1,2}_{(\bx,\bq)}}\leq 1$, the functions
\begin{align*}
c_{\varphi,n}(t)&=\int_{\Omega_{\xi(t)} \times B} M^m\widehat\psi^{m,n}(t)\,\Pi_n^{\bq}\Pi_n^{\bx}\varphi\dq\dx,\\
c_{\varphi}(t)&=\int_{\Omega_{\xi(t)} \times B} M^m\widehat\psi^{m}(t)\,\varphi\dq\dx,
\end{align*}
where $\Pi_n^{\bq}$  and $\Pi_n^{\bx}$ are $L^2$-projections onto $(\phi_i)_{i=1}^n$ and $({\varpi}_i)_{i=1}^n$ respectively. We aim to show
\begin{align}\label{eq:1601N}
\big\|c_{\varphi,n}\big\|_{C^{0,1/\chi'}(\overline I)}\lesssim 1
\end{align}
for some $\chi>1$ uniformly in $n$ and $\varphi$. This follows by inserting $\Pi_n^{\bq}\Pi_n^{\bx}\varphi$ into \eqref{eq:FPn} and integrating in time. The result is then a consequence of the a priori estimate from \eqref{suptx3Zx} and the continuity of $\Pi_n^{\bq}$ and $\Pi_n^{\bx}$.
Taking a subsequence and applying Arcel\'a-Ascoli's theorem, we can conclude that $c_{\varphi,n}$ converges uniformly in $\overline I$.
Using \eqref{eq:1601N} we will prove that the function
\begin{align}\label{eq:2009}
t\mapsto\sup_{\|\varphi\|_{W^{1,2}_{(\bx,\bq)}}\leq 1}\big(c_{\varphi,n}(t)-c_\varphi(t))
\end{align}
converges uniformly in $\overline I$.
First of all we extend $\widehat{\psi}^{m,n}(t,\cdot,\bq)$ and $\widehat{\psi}^{m}(t,\cdot,\bq)$ from $\Omega_{\xi(t)}$ to $S_L$
and use the compact embedding $L^{2}(S_L\times B)\hookrightarrow W^{-1,2}(S_L\times B)$
to conclude that (after taking a diagonal sequence)
\begin{align*}
\widehat\psi^{m,n}(t)\rightarrow \widehat\psi^{m}(t)\quad\text{in}\quad W^{-1,2}(\Omega_{\xi(t)}\times B)
\end{align*}
for all $t\in I_0$, where $I_0\subset I$ is a dense subset.
Consequently, we have for each such $t$
\begin{align*}
|c_{\varphi,n}(t)-c_{\varphi}(t)|&\leq \bigg|\int_{\Omega_{\xi(t)} \times B} M^m(\widehat\psi^{m,n}(t) - \widehat\psi^{m}(t))\,\Pi_n^{\bq}\Pi_n^{\bx}\varphi\dq\dx\bigg|
\\&
+\bigg|\int_{\Omega_{\xi(t)} \times B} M^m\widehat\psi^{m}(t)\,\big(\Pi_n^{\bq}\Pi_n^{\bx}\varphi-\varphi\big)\dq\dx\bigg|
\\&
\leq \,c\,\Big(\|\widehat\psi^{m,n}(t) - \widehat\psi^{m}(t)\|_{W^{-1,2}_{(\bx,\bq)}}+\| \widehat\psi^{m}(t)\|_{L^{2}_{(\bx,\bq)}}\|\Pi_n^{\bq}\Pi_n^{\bx}\varphi-\varphi\|_{L^{2}_{(\bx,\bq)}}\Big)\\
&\longrightarrow 0
\end{align*}
uniformly in $\varphi$. Combining this with \eqref{eq:1601N} and boundedness of $M^m$ easily yields \eqref{eq:2009}.
On account of \eqref{suptx3Zx} and \eqref{eq:2009}, we have
\begin{align*}
\int_I \big(c_{\widehat\psi^{m,n},n}(t)-c_{\widehat\psi^{m,n}}(t)\big)\dt
&=
\int_I \|\widehat\psi^{m,n}\|_{W^{1,2}_{(\bx,\bq)}}\frac{\big(c_{\widehat\psi^{m,n},n}(t)-c_{\widehat\psi^{m,n}}(t)\big)}{\|\widehat\psi^{m,n}\|_{W^{1,2}_{(\bx,\bq)}}}\dt
\\
&\leq\bigg(\int_I \|\widehat\psi^{m,n}\|_{W^{1,2}_{(\bx,\bq)}}^2\dt\bigg)^{\frac{1}{2}}\bigg(\int_{I}\sup_{\|\varphi\|_{W^{1,2}_{(\bx,\bq)}}\leq 1}\big(c_{\varphi,n}(t)-c_{\varphi}(t)\big)^2\dt\bigg)^{\frac{1}{2}}
\\
&\longrightarrow0
\end{align*}
as $n\rightarrow\infty$ using also \eqref{suptx3Zx}. 
By combining this with the weak convergence of $\widehat{\psi}^{m,n}$ from \eqref{psiLimReg1}, we obtain \eqref{eq:312N}.
Since $M^m$ is strictly positive and bounded, one can now easily pass to the limit in \eqref{eq:FPn} and conclude that the limit $\widehat{\psi}^m$ satisfies 
\begin{equation}
\begin{aligned}
\int_I  &\frac{\mathrm{d}}{\dt} \int_{\Omega_{\xi(t)} \times B}M^m\widehat{\psi}^m \, \varphi \dq \dx \dt = \int_{I \times \Omega_{\xi(t)} \times B}M^m\widehat{\psi}^m \partial_t \varphi 
\dq \dx \dt
\\&+
\int_{I \times \Omega_{\xi(t)} \times B}
\big(\bv \,M^m   \widehat{\psi}^m 
-
\varepsilon M^m\nabx \widehat{\psi}^m \big) \cdot \nabx \varphi  \dq \dx \dt
\\&
+\sum_{i=1}^K \int_{I \times \Omega_{\xi(t)} \times B}
 \bigg(  M (\nabx   \bv) \bq_i\Lambda_\ell(\widehat{\psi}^m)  
 -
\sum_{j=1}^K \frac{A_{ij}}{4\lambda}   M^m \nabqj  \widehat{\psi}^m \bigg) \cdot \nabqi\varphi \dq \dx \dt
\label{weakFPlimit}
\end{aligned}
\end{equation}
for all  $\varphi \in C^\infty (\overline{I}\times \R^3 \times \overline{B})$. 
Noticing the duality
\begin{align}\label{eq:dual}
\big(L^2(I;W^{1,2}(\Omega_{\xi(t)}\times B))\big)'\cong L^2(I;W^{-1,2}(\Omega_{\xi(t)} \times B)),
\end{align}
cf. \cite[page 8]{MR3575853},
we can rewrite \eqref{weakFPlimit} as
\begin{equation}
\begin{aligned}
 \Big\langle&\partial_t (M^m\widehat{\psi}^m),\varphi 
\Big\rangle_t
=
\int_{\Omega_{\xi(t)} \times B}
\big(\bv \,M^m   \widehat{\psi}^m 
-
\varepsilon M^m\nabx \widehat{\psi}^m \big) \cdot \nabx \varphi  \dq \dx
\\&
+\sum_{i=1}^K \int_{\Omega_{\xi(t)} \times B}
 \bigg(  M (\nabx   \bv) \bq_i\Lambda_\ell(\widehat{\psi}^m)  
 -
\sum_{j=1}^K \frac{A_{ij}}{4\lambda}   M^m \nabqj  \widehat{\psi}^m \bigg) \cdot \nabqi\varphi \dq \dx \\
&- \int_{\partial\Omega_{\xi(t)} \times B} (\partial_t\xi \bm{\nu})\circ\bm{\varphi}_\xi^{-1}\cdot\bm{\nu}_\xi M^m \widehat{\psi}^m \varphi\dq\dy
\label{weakFPlimit*}
\end{aligned}
\end{equation}
for $t\in I$ and all $\varphi\in W^{1,2}(\Omega_{\xi(t)}\times B)$. Here $\langle\cdot,\cdot\rangle_t$
denotes the duality pairing between $W^{-1,2}(\Omega_{\xi(t)}\times B)$ and $W^{1,2}(\Omega_{\xi(t)}\times B)$. For various estimates, we need the formula
\begin{align}\label{eq:1009}
\begin{aligned}
 \int_0^\tau\Big\langle\partial_t (M^m\widehat{\psi}^m),f'(\widehat{\psi}^m)
\Big\rangle_t\dt&=\int_{\Omega_{\xi(\tau)}\times B}M^mf(\widehat{\psi}^m(\tau))\dx\dq-\int_{\Omega_{\xi(0)}\times B}M^mf(\widehat{\psi}^m(0))\dx\dq\\
&- \int_0^{\tau}\int_{\partial\Omega_{\xi(t)} \times B}M^mf(\widehat{\psi}^m) (\partial_t\xi \bm{\nu})\circ\bm{\varphi}_\xi^{-1}\cdot\bm{\nu}_\xi \dq\dy\dt
\end{aligned}
\end{align}
for a.e. $\tau\in I$, provided $f$ is a Lipschitz-function. In order to prove \eqref{eq:1009}, we can define a temporal regularisation operator $\mathscr S_\varrho$ on $L^2(I;W^{1,2}(\Omega_{\xi(t)}\times B))$ by
\begin{align*}
\mathscr S_\varrho\phi=(\mathscr E_\xi(t)\phi)_\varrho\Big|_{I\times\Omega_\xi\times B}
\end{align*}
where $\mathscr E_\xi(t):W^{1,2}(\Omega_{\xi(t)})\rightarrow W^{1,2}(\R^3)$
is the standard extension operator (note that the boundary of $\Omega_{\xi(t)}$ is smooth)
and $(\cdot)_\varrho$ denotes a temporal regularisation which is symmetric and commutes with derivatives.
We clearly have
\begin{align}\label{reg:1}
\mathscr S_\varrho\phi\rightarrow\phi\quad \text{in}\quad L^2(I;W^{1,2}(\Omega_{\xi(t)}\times B))
\end{align}
for $\phi\in L^2(I;W^{1,2}(\Omega_{\xi(t)}\times B))$.
Thus, for $\varphi\in C^\infty(\overline I\times\R^3)$ with $\varphi(0,\cdot)=\varphi(T,\cdot)=0$, we have
\begin{align*}
 \int_I\Big\langle\partial_t \mathscr S_\varrho(M^m\widehat{\psi}^m),\varphi
\Big\rangle_t\dt
&= -\int_I\int_{\Omega_{\xi(t)}\times B} \mathscr S_\varrho(M^m\widehat{\psi}^m)\,\partial_t\varphi\dq\dx\dt\\
&- \int_I\int_{\partial\Omega_{\xi(t)} \times B}(\partial_t\xi \bm{\nu}) \circ\bm{\varphi}_\xi^{-1}\cdot\bm{\nu}_\xi \mathscr S_\varrho(M^m\widehat{\psi}^m)\,\varphi\dq\dy\dt\\
&= -\int_I\int_{\Omega_{\xi(t)}\times B} M^m\widehat{\psi}^m\,\partial_t\mathscr S_\varrho\varphi\dq\dx\dt\\
&- \int_I\int_{\partial\Omega_{\xi(t)} \times B} (\partial_t\xi \bm{\nu})\circ\bm{\varphi}_\xi^{-1}\cdot\bm{\nu}_\xi \mathscr S_\varrho(M^m\widehat{\psi}^m)\,\varphi\dq\dy\dt.
\end{align*}
Using
$\mathscr S_\varrho\varphi$ as a test-function in
\eqref{weakFPlimit}, the latter formula also make sense for $\varphi\in L^2(I;W^{1,2}(\Omega_{\xi(t)}\times B))$. Moreover, we obtain
\begin{align}\label{reg:2}
\partial_t\mathscr S_\varrho(M^m\widehat{\psi}^m)\rightharpoonup\partial_t(M^m \widehat{\psi}^m)\quad \text{in}\quad L^2(I;W^{-1,2}(\Omega_{\xi(t)} \times B))
\end{align}
as a consequence of \eqref{reg:1}. Combining \eqref{reg:1} and \eqref{reg:2} finally proves \eqref{eq:1009}. \\
With \eqref{eq:1009} at hand we can justify taking $(\widehat{\psi}^m)_{-}:=\min\{0,\widehat{\psi}^m\}$ as a test function in \eqref{weakFPlimit*}. Indeed, choosing $f(s)=s_{-}:=\min\{0,s\}$ we obtain in analogy with \eqref{suptx3Zx},
\begin{align}\label{suptx3Zxxx}
\begin{aligned}
\sup_{t\in I}\int_{\Omega_{\xi(t)} \times B} & 
\vert
 (\widehat{\psi}^m)_- \vert^2 \dq \dx
+
\int_I
\int_{\Omega_{\xi(t)} \times B}  
\vert
\nabx (\widehat{\psi}^m)_- \vert^2 \dq  \dx \dt
\\
&+
\int_I
\int_{\Omega_{\xi(t)} \times B}  
\vert
\nabq (\widehat{\psi}^m)_- \vert^2 \dq  \dx \dt
\lesssim_{m} 0
\end{aligned}
\end{align}
where the constant depends only on $A_0$, $\varepsilon>0$, $\bv$ and $\xi$. As opposed to \eqref{suptx3Zx}, the right hand side of \eqref{suptx3Zxxx} is zero because $(\widehat{\psi}^m)_{-}\big\vert_{t=0}=0$ since $T_\ell(\widehat{\psi}^{m}_0) \geq0$. We can therefore conclude that 
$(\widehat{\psi}^m)_- \equiv 0 \text{ in } I \times \Omega_{\xi(t)} \times B$ and thus
\begin{align}
\widehat{\psi}^m \geq 0 \quad \text{a.e. in} \quad I \times \Omega_{\xi(t)} \times B;
\end{align}
the minimum principle. 
\subsection{An auxiliary problem : the dumped continuity equation}
\label{subsec:aux}
For the ultimate purpose of estimating the Kramer's expansion of the extra stress tensor $\mathbb{T}(\psi)$, we also need to consider the solution $\Xi^m$ of a dumped continuity equation for the weighted average of $\widehat{\psi}^m$.
In particular, by setting $\Xi^m:=\int_BM^m \widehat{\psi}^m \dq$ where $m\in\mathbb{N}$ is fixed and choosing the test-function in \eqref{weakFPlimit} to be independent of $\bq$, we obtain
\begin{equation}
\begin{aligned}\label{etaLimit}
\int_I  &\frac{\mathrm{d}}{\dt} \int_{\Omega_{\xi(t)}}\Xi^m \, \varphi \dx \dt = \int_{I \times \Omega_{\xi(t)}}\big( \Xi^m \,\partial_t \varphi 
+
\bv \,\Xi^m \cdot \nabx \varphi -
\varepsilon \,\nabx \Xi^m \cdot \nabx \varphi \big) \dx \dt
\end{aligned}
\end{equation}
for all  $\varphi \in C^\infty (\overline{I}\times \R^3)$. We supplement \eqref{etaLimit} with the initial condition
\begin{align}
\label{ximo}
\Xi^m(0,\cdot) = \Xi^m_0:=\int_B M^m T_\ell(\widehat{\psi}^m_0)\dq \leq \int_B M\widehat{\psi}_0\dq =\Xi_0.
\end{align}
As shown in \cite[Theorem 3.1]{breitSchw2018compressible}, there is a unique solution to \eqref{etaLimit} in the class  
\begin{equation}
\begin{aligned}
\label{polyNumDenKappaEst00}
 L^\infty \big( I;L^2(\Omega_{\xi(t)})\big)
 \cap L^2\big(I;W^{1,2}(\Omega_{\xi(t)}) \big)
\end{aligned}
\end{equation}
and it satisfies the estimate
\begin{equation}
\begin{aligned}
\label{xiRegLimita}
&\big\Vert \Xi^m \big\Vert_{L^\infty(I;L^2(\Omega_{\xi(t)} ))}^2
+
\big\Vert \nabx \Xi^m \big\Vert_{L^2(I \times \Omega_{\xi(t)} )}^2
  \lesssim 
 \big\Vert \Xi_0  \big\Vert_{L^2( \Omega_{\xi(0)} )}^2
\end{aligned}
\end{equation}
with a constant depending only on the center-of-mass diffusion coefficient $\varepsilon>0$, $\xi(t)$ and $\bv$. It is also shown in \cite[Theorem 3.1]{breitSchw2018compressible} that the unique solution to \eqref{etaLimit} satisfies a renormalised formulation which reads as follows.
 For $\theta\in C^2([0,\infty);[0,\infty))$ with $\theta'(s)= 0$
 for large values of $s$ and $\theta(0)=0$, we have
\begin{align}
\label{eq:renormz}
\begin{aligned}
\int_I\frac{\dd}{\dt}&\int_{\Omega_{\xi(t)}} \theta(\Xi^m)\,\psi\dxt-\int_{I\times \Omega_{\xi(t)}}\theta(\Xi^m)\,\partial_t\psi\dxt
\\
=&-\int_{I\times \Omega_{\xi(t)}}\big(\varrho\theta'(\Xi^m)-\theta(\Xi^m)\big)\Div\bv\,\psi\dxt
+\int_{I\times \Omega_{\xi(t)}}\theta(\Xi^m) \bv\cdot\nabx\psi\dxt
\\ &-\int_{I\times\Omega_{\xi(t)}}\varepsilon\nabx \theta(\Xi^m)\cdot\nabx\psi\dxt-\int_{I\times \Omega_{\xi(t)}}\varepsilon\theta''(\Xi^m)|\nabx\Xi^m|^2\psi\dxt
\end{aligned}
\end{align}
for all $\psi\in C^\infty(\overline{I}\times\R^3)$. In fact, the assumption that the function $\theta$ in \eqref{eq:renormz} has linear growth can be replaced by a quadratic growth assumption.  Due to \eqref{xiRegLimita} and a smooth approximation argument, we can allow smooth non-negative functions with $\theta(0)=0$ which have linear growth and bounded derivatives. Choosing $\psi$ as a smooth approximation
of $\mathbb I_{(0,t)}$ and noticing that in our case $\Div\bv=0$, we obtain
\begin{align*}
\int_{\Omega_{\xi(t)} } \theta(\Xi^m(t))\dx\leq \int_{\Omega_{\xi(0)} } \theta(\Xi^m_0)\dx
\end{align*}
for all $t\in I$ provided that $\theta$ additionally satisfies $\theta''\geq0$.
We would like to choose $\theta(s)=s^p$ for large $p$, which is currently not possible. However, we can approximate it by a sequence of convex $C^2$-functions $\theta_{N}$ with
$\theta_N(s)=s^p$ for $s\leq N$  and $\theta_N\leq \,cp^2\theta$ for all $N\gg1$. This yields
\begin{align*}
\int_{\Omega_{\xi(t)}} \theta_N(\Xi^m(t))\dx\leq \int_{\Omega_{\xi(0)} } \theta(\Xi^m_0)\dx\leq\,c\,p^2\int_{\Omega_{\xi(0)} } |\Xi^m_0|^p\dx.
\end{align*}
Fatou's lemma  yields $\Xi^m(t)\in L^p(\Omega_{\xi(t)})$ and
\begin{align*}
\int_{\Omega_{\xi(t)}} |\Xi^m(t)|^p\dx\leq  \,c\,p^2\int_{\Omega_{\xi(0)}} |\Xi^m_0|^p\dx.
\end{align*}
Taking the $p$th root and passing with $p\rightarrow\infty$ implies
\begin{align*}
\|\Xi^m(t)\|_{L^\infty(\Omega(t))}\leq \, \|\Xi^m_0\|_{L^\infty(\Omega_{\xi(0)})}.
\end{align*}
for all $t\in I$.
Recalling \eqref{xiRegLimita}, we conclude that
\begin{equation}
\begin{aligned}
\label{xiRegLimit}
&\big\Vert \Xi^m \big\Vert_{L^\infty(I\times \Omega_{\xi(t)} )}^2
+
\big\Vert \nabx \Xi^m \big\Vert_{L^2(I \times \Omega_{\xi(t)} )}^2
  \lesssim 
 \big\Vert \Xi_0  \big\Vert_{L^\infty( \Omega_{\xi(0)} )}^2.
\end{aligned}
\end{equation}
Standard arguments making use of the linearity of \eqref{etaLimit} yield a limit function $\Xi$, as $m\rightarrow\infty$, also satisfying \eqref{etaLimit} and \eqref{xiRegLimit}.
We now show that the leading term in the extra stress tensor is essentially bounded too.
\begin{corollary}
\label{cor:polyNumDenKappa}
For each $i=1, \ldots, K$, let
\begin{equation}
\begin{aligned}
\label{extraStreeTensorx}
\mathbb{T}_i^\ell(M\widehat{\psi}^m) 
&:=
\int_BM(\bq) \nabqi T_\ell\big[ \widehat{\psi}^m(t, \mathbf{x},\mathbf{q}) \big]\otimes\mathbf{q}_i \dq
\end{aligned}
\end{equation}
Then for each $i=1, \ldots, K$,
\begin{equation}
\begin{aligned}
\label{elasticTensorKappaEst}
\big\vert \mathbb{T}_i^\ell(M\widehat{\psi}^m) \big\vert
\lesssim \ell 
\end{aligned}
\end{equation}
uniformly in $m\in \mathbb{N}$.
\end{corollary}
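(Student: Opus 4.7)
The plan is to remove the derivative from $T_\ell(\widehat{\psi}^m)$ by integration by parts in $\bq_i$, which transforms the expression into an integral whose only $m$- and $(t,\bx)$-dependence enters through a pointwise bounded quantity $T_\ell(\widehat{\psi}^m)$.

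\textbf{Step 1 (pointwise bound on $T_\ell$).} From the definition $T_\ell(s)=\int_0^s\Gamma(r/\ell)\dd r$ and the fact that $\Gamma$ is smooth, bounded, and supported in $(-2,2)$, it follows that
\begin{align*}
\vert T_\ell(s)\vert \leq 2\ell\,\Vert\Gamma\Vert_{L^\infty(\R)}\lesssim \ell\qquad\text{for all }s\in\R.
\end{align*}

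\textbf{Step 2 (integration by parts).} Since $M\in W^{1,1}_0(B)$ by \eqref{boundMiqi}, the trace of $M$ on $\partial \overline{B}_i$ vanishes, so boundary contributions in $\bq_i$ are zero. Componentwise this yields
\begin{align*}
\int_B M\,\nabqi T_\ell(\widehat{\psi}^m)\otimes \bq_i\dq
= -\int_B T_\ell(\widehat{\psi}^m)\,\nabqi M\otimes \bq_i \dq - \mathbb{I}\int_B M\,T_\ell(\widehat{\psi}^m)\dq,
\end{align*}
where the second term arises from differentiating the factor $\bq_i$. Applying the identity $\nabqi M=-M\,U_i'(\tfrac12|\bq_i|^2)\,\bq_i$ from \eqref{maxwellianIdentity} then gives
\begin{align*}
\mathbb{T}_i^\ell(M\widehat{\psi}^m)
&= \int_B M(\bq)\,U_i'\Big(\tfrac12|\bq_i|^2\Big)(\bq_i\otimes\bq_i)\,T_\ell(\widehat{\psi}^m)\dq
- \mathbb{I}\int_B M(\bq)\,T_\ell(\widehat{\psi}^m)\dq.
\end{align*}

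\textbf{Step 3 (bounding).} Combining Step 1 with the preceding identity,
\begin{align*}
\big|\mathbb{T}_i^\ell(M\widehat{\psi}^m)\big|
\lesssim \ell\,\bigg(\int_B M(\bq)\,\Big|U_i'\Big(\tfrac12|\bq_i|^2\Big)\Big|\,|\bq_i|^2\dq + \int_B M(\bq)\dq\bigg).
\end{align*}
The second integral equals one by the normalisation of $M$, while the first is a finite constant depending only on the spring potential $U_i$ and on $M$ (this is the standard integrability condition guaranteeing that the elastic stress tensor in \eqref{extraStreeTensor1} is well defined; e.g.\ for the FENE model with $b>2$ one has $M U_i'\lesssim (1-|\bq_i|^2/b)^{b/2-1}\in L^1(B_i)$). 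The resulting constant is independent of $m$, $t$, and $\bx$, giving \eqref{elasticTensorKappaEst}.

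The step requiring the most care is the integration by parts in Step 2: it relies on $M\in W^{1,1}_0(B)$ to kill the boundary term together with the regularity $T_\ell(\widehat{\psi}^m)\in L^\infty(I;L^2(\Omega_{\xi(t)};W^{1,2}_M(B)))$ established earlier (note that $T_\ell$ is Lipschitz so this follows from the bounds on $\nabq\widehat{\psi}^m$ in \eqref{suptx3Z}). Once the pointwise identity for $\mathbb{T}_i^\ell$ is obtained, the uniform-in-$m$ bound is immediate since $T_\ell$ itself is uniformly bounded by a multiple of $\ell$.
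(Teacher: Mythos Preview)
Your proof is correct and follows exactly the approach indicated in the paper: integrate by parts in $\bq_i$ (using $M\in W^{1,1}_0(B)$ to kill the boundary term) and then exploit the uniform bound $|T_\ell|\lesssim \ell$. One minor sharpening: the finiteness of $\int_B M\,|U_i'(\tfrac12|\bq_i|^2)|\,|\bq_i|^2\dq$ in Step~3 need not be invoked as an extra ``standard integrability condition''---it is already a consequence of $M\in W^{1,1}_0(B)$, since $|\nabqi M|=M\,|U_i'|\,|\bq_i|\in L^1(B)$ and $|\bq_i|$ is bounded on $B_i$.
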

For the proof of the Corollary above, integrate by parts and use the property of the cut-off $T_\ell$, see \cite[(2.65)]{bulicek2013existence}.

\subsection{Estimates for the relative entropy and Fisher information}
Next, we show an estimate for the relative entropy defined in \eqref{relativeEntropyDefined} and the Fisher information \eqref{fisherInformation} which in our context is given by the second moment of the gradient of the square-root of our solution.
Note that a form of  energy estimate can be derived by passing to the limit in \eqref{suptx3Z}. However, this gives no information on the relative entropy which is crucial once we couple the Fokker--Planck equation with the fluid system. \\
Now, let us recall the entropy functional \eqref{relativeEntropyFunctional} and its convex regularization $\mathcal{F}_\delta(\cdot)$ introduced at the start of Section \ref{sec:energEsr}. If we use  $\mathcal{F}'_\delta(\widehat{\psi}^m)$ as a test-function in \eqref{weakFPlimit*} (justified through \eqref{eq:1009}), then similar to \eqref{fokkerEnerA}, we obtain
\begin{align}
&
\int_{\Omega_{\xi(t)}  \times B}
kM^m\, \mathcal{F}_\delta \big(\widehat{\psi}^m\big) \dq \dx
 +
 k\,\varepsilon
 \int_0^t\int_{\Omega_{\xi(\sigma)} \times B}
M^m\big\vert\nabx  \widehat{\psi}^m\big\vert^2\big(\widehat{\psi}^m+\delta \big)^{-1}
 \dq \dx \ds\nonumber
 \\&+
\frac{kA_0}{4\lambda} \,
 \int_0^t\int_{\Omega_{\xi(\sigma)}  \times B}
 M^m \big\vert\nabq  \widehat{\psi}^m \big\vert^2\big(\widehat{\psi}^m +\delta \big)^{-1}
 \dq \dx\ds\label{fokkerEnerXkappa}
 \\&
\leq
\int_{\Omega_{\xi(0)}  \times B}
kM^m\, \mathcal{F}_\delta \big(\widehat{\psi}^m_0\big) \dq \dx+
 k\,\int_0^t\int_{\Omega_{\xi(\sigma)}  \times B}
 \sum_{i=1}^K
M\,T_{\delta,\ell}\big(\widehat{\psi}^m \big) \, U'\Big(\frac{1}{2}\vert \mathbf{q}_i \vert^2 \Big) \bq_i \otimes \bq_i : \nabx\bv
 \dq \dx\ds
\nonumber
\end{align}
uniformly in $m, \ell \in \mathbb{N}$ and $\delta>0$. By passing to the limit $\delta\rightarrow 0$ (keeping in mind \eqref{conGamma}), we obtain just as in \eqref{fokkerEner},
\begin{equation}
\begin{aligned}
\label{fokkerEner1}
&\int_{\Omega_{\xi(t)} \times B}
kM^m\, \mathcal{F}\big( \widehat{\psi}^m(t,\cdot) \big) \dq \dx
 +
4k\,\varepsilon \int_0^t\int_{\Omega_{\xi(\sigma)} \times B}
M^m\Big\vert \nabx \sqrt{ \widehat{\psi}^m } \Big\vert^2 \dq \dx\ds
 \\&
 +
\frac{kA_{0}}{\lambda}\int_0^t  \int_{\Omega_{\xi(\sigma)}\times B}
  M^m\Big\vert \nabq \sqrt{ \widehat{\psi}^m } \Big\vert^2 
 \dq \dx\ds
 \leq
 \int_{\Omega_{\xi(0)} \times B}
 kM^m\, \mathcal{F}\big( \widehat{\psi}^m_0 \big)  \dq \dx
\\&+
 \int_0^t\int_{\Omega_{\xi(\sigma)} }
\mathbb{T}_\ell\big(M\widehat{\psi}^m \big) :\nabx   \bv
  \dx\ds.
\end{aligned}
\end{equation}
for all $t\in \overline{I}$ where
\begin{align}
\label{extraStreeTensorx1}
\mathbb{T}_\ell(M\widehat{\psi}^m) 
:=
k\sum_{i=1}^K  \int_BM(\bq) \nabqi T_\ell\big[ \widehat{\psi}^m(t, \mathbf{x},\mathbf{q}) \big]\otimes\mathbf{q}_i \dq
 -
  k\,\Xi^m \,\mathbb{I}
-\eth \,(\Xi^m)^2 \mathbb{I}.
\end{align}
\subsection{Second layer approximate weak solution}
\label{subsec:2nd}
If we now recall  the fact that $\bv$ satisfies \eqref{datasetReg0}, we can use \eqref{xiRegLimit} and Corollary \ref{cor:polyNumDenKappa} to conclude that the right-hand side of \eqref{fokkerEner1} is bounded uniformly in $m\in \mathbb{N}$. We show now how to identify the limit, as $m\rightarrow \infty$, of the last term in \eqref{fokkerEner1}.
To do this, we first note that there exist a constant $s_0>0$ such that
\begin{align*}
s \ln(\mathrm{e}+s)\lesssim M\mathcal{F}(s/M) \quad \text{if} \quad s\geq s_0 \quad \text{and}\quad M<c.
\end{align*}
Now, we let $\psi^m := M^m \widehat{\psi}^m$. Since $M^m<c$ uniformly in $m\in \mathbb{N}$,  for all $t\in I$, we have for some $s_0>0$,
\begin{align*}
\int_{\R^3\times B}&\chi_{\Omega_{\xi(t)}}\psi^m \ln(\mathrm{e}+ \psi^m)\dq\dx
=
\int_{\psi^m< s_0} \chi_{\Omega_{\xi(t)}} \psi^m \ln(\mathrm{e}+ \psi^m)\dq\dx
\\&+
\int_{\psi^m\geq s_0} \chi_{\Omega_{\xi(t)}} \psi^m \ln(\mathrm{e}+ \psi^m)\dq\dx
\lesssim
1
+
\int_{\R^3 \times B} \chi_{\Omega_{\xi(t)}}
M^m\, \mathcal{F}\big( \widehat{\psi}^m\big)\dq \dx
\end{align*}
with a constant depending only on the Lebesgue measure of $\Omega_{\xi(t)} \times B$ and $s_0$.
By using \eqref{fokkerEner1}, we can conclude that
\begin{align}
\label{logEPlus1}
\sup_{t\in I}\int_{\R^3 \times B}& \chi_{\Omega_{\xi(t)}} \psi^m \ln(\mathrm{e}+ \psi^m)\dq\dx
\lesssim 1
\end{align}
holds uniformly in $m\in\mathbb{N}$ and thus, $\psi^m$ is equi-integrable. In order to
obtain compactness for $\widehat{\psi}^m$, we have to localise the equation to avoid problems with the moving boundary. 
We consider  a sequence $\mathcal Q_k=J^k\times \mathcal B_{\bx}^k\times\mathcal B_{\bq}^k$ (the $\mathcal B_{\bx}^k$s and $\mathcal B_{\bq}^k$s are open balls and the $J_k$s open intervals), $k\in \mathbb{N}$ such that $\bigcup_k \mathcal Q_k =I\times\Omega_{\xi}\times B$ $J_k\Subset I$,  $\mathcal B_{\bx}^k\Subset \bigcup_{t\in J_k}\Omega_{\xi(t)}$ and $\mathcal B_{\bq}^k\Subset B$ for all $k\in\mathbb N$. Now fixed $k\in\mathbb N$ and use $\varphi\in C^\infty_c(\mathcal Q_k)$
as a test-function in \eqref{weakFPlimit} to conclude
\begin{align*}
\partial_t\widehat{\psi}^m\in L^2(J_k;W^{-1,2}(\mathcal B_{\bx}^k\times \mathcal B_{\bq}^k))
\end{align*}
uniformly in $m$ using the uniform bounds from \eqref{fokkerEner1}. Since
$M^m$ is strictly positive in $\mathcal B_{\bq}^k$ with a bound depending on $k$ but independent of $m,$ we also have
\begin{align*}
\widehat\psi^m\in L^2(J_k;W^{1,2}(\mathcal B_{\bx}^k\times \mathcal B_{\bq}^k))
\end{align*}
uniformly. Consequently, we obtain
\begin{align*}
\widehat{\psi}^m\rightarrow \widehat\psi\quad \text{in}\quad L^2(\mathcal Q_k)
\end{align*}
for a subsequence as well as
\begin{align}\label{3101x}
\chi_{\Omega_{\xi(t)}}\widehat{\psi}^m  \rightarrow \chi_{\Omega_{\xi(t)}}\widehat{\psi} \quad \text{a.e. in }\quad I\times\R^3\times B
\end{align}
by taking a diagonal sequence.
Due to uniform convergence of $M^m$ to $M$, cf. \eqref{eq:Mm}, we also obtain
\begin{align}\label{3101z}
\chi_{\Omega_{\xi(t)}}\psi^m \rightarrow  \chi_{\Omega_{\xi(t)}}\psi \quad \text{a.e. in }\quad I\times\R^3\times B.
\end{align}
By combining \eqref{3101z} with \eqref{logEPlus1},  we can conclude from Vitali's convergence theorem that
\begin{align*}
\psi^m \rightarrow  \psi \quad \text{ in }\quad L^1(I \times \Omega_{\xi(t)}\times B)
\end{align*}
and by interpolation with \eqref{xiRegLimit},
\begin{align}
\label{psimq}
\psi^m \rightarrow \psi \quad \text{ in }\quad L^q(I \times \Omega_{\xi(t)};L^1( B)) \quad \text{for all} \quad q\in[1,\infty).
\end{align}
This is enough to pass to the limit $m\rightarrow \infty$ in \eqref{fokkerEner1} so that together with results from Section \ref{subsec:aux}, we obtain
\begin{equation}
\begin{aligned}
\label{fokkerEnerXLimit}
&
\Vert \Xi(t,\cdot) \Vert^2_{L^\infty(\Omega_{\xi(t)})}
 +
k \int_{\Omega_{\xi(t)} \times B} 
M\, \mathcal{F} \big( \widehat{\psi}(t, \cdot, \cdot) \big)
\dq \dx
+
\varepsilon \int_0^t
 \int_{\Omega_{\xi(\sigma)}}\vert \nabx \Xi \vert^2 \dx\ds
\\&
+
 4k\,\varepsilon \int_0^t
 \int_{ \Omega_{\xi(\sigma)} \times B}
 M\Big\vert \nabx \sqrt{ \widehat{\psi}} \Big\vert^2
 \dq \dx\ds
+
 \frac{kA_0}{\lambda} \int_0^t
 \int_{\Omega_{\xi(\sigma)} \times B}
 M\Big\vert \nabq \sqrt{ \widehat{\psi} } \Big\vert^2
 \dq \dx\ds
 \\&
 \lesssim 
\Vert \Xi_0  \Vert^2_{L^\infty(\Omega_{\xi(0)})}
+
k \int_{\Omega_{\xi(0)}\times B} 
M\, \mathcal{F} \big( \widehat{\psi}_0 \big)
\dq \dx  
+
\int_0^t \ \int_{\Omega_{\xi(\sigma)} }
\mathbb{T}_\ell(M\widehat{\psi}) :\nabx \bv
  \dx\ds .
\end{aligned}
\end{equation}
for all $t\in I$.  To finally identify the distributional solution solved by the limit $\widehat{\psi}:=\widehat{\psi}^\ell$, we first note that for all $j=1,\ldots,K$, the following convergence 
\begin{align}
M^m\nabx \widehat{\psi}^m \rightharpoonup M\nabx \widehat{\psi} &\quad \text{in}\quad L^2\big(I \times \Omega_{\xi(t)}; L^1(B; \R^{3}) \big) \label{mNabxPsi}
\\
M^m\nabqj \widehat{\psi}^m \rightharpoonup M\nabqj \widehat{\psi} &\quad \text{in}\quad L^2\big(I \times \Omega_{\xi(t)}; L^1(B; \R^{3K}) \big) \label{mNabjPsi}
\end{align}
holds as $m\rightarrow \infty$. To avoid repetition, we refer the reader to \cite[(2.100)]{bulicek2013existence}.
Using \eqref{mNabxPsi}--\eqref{mNabjPsi}, the uniform convergence of $M^m$ as well as \eqref{psimq}, we can pass to the limit in \eqref{weakFPlimit} to obtain
\begin{equation}
\begin{aligned}
\int_I  &\frac{\mathrm{d}}{\dt} \int_{\Omega_{\xi(t)} \times B}M\widehat{\psi} \, \varphi \dq \dx \dt = \int_{I \times \Omega_{\xi(t)} \times B}M\widehat{\psi} \partial_t \varphi 
\dq \dx \dt
\\&+
\int_{I \times \Omega_{\xi(t)} \times B}
\big(\bv \,M  \widehat{\psi} 
-
\varepsilon M\nabx \widehat{\psi} \big) \cdot \nabx \varphi  \dq \dx \dt
\\&
+\sum_{i=1}^K \int_{I \times \Omega_{\xi(t)} \times B}
 \bigg(  M (\nabx   \bv) \bq_i\Lambda_\ell(\widehat{\psi})  
 -
\sum_{j=1}^K \frac{A_{ij}}{4\lambda}   M \nabqj  \widehat{\psi} \bigg) \cdot \nabqi\varphi \dq \dx \dt
\label{weakFPlimit1}
\end{aligned}
\end{equation}
for all  $\varphi \in C^\infty (\overline{I}\times \R^3 \times \overline{B})$.
\subsection{Conclusion}
By recalling Remark \ref{rem:noL}, we note that $\widehat{\psi}:=\widehat{\psi}^\ell$ where $\ell\in \mathbb{N}$ is fixed and $\psi=M\widehat{\psi}:=M\widehat{\psi}^\ell$.  Since \eqref{fokkerEnerXLimit} holds independently of $\ell\in \mathbb{N}$, just as in \eqref{logEPlus1}, we obtain
\begin{align}
\label{logEPlus2} 
\sup_{t\in I}\int_{\Omega_{\xi(t)}\times B}&\psi^\ell \ln(\mathrm{e}+ \psi^\ell)\dq\dx
\lesssim 1
\end{align}
uniformly in $\ell\in\mathbb{N}$ from which we obtain
\begin{align}
\label{psiellq}
\psi^\ell \rightarrow \psi \quad \text{ in }\quad L^q(I \times \Omega_{\xi(t)};L^1( B)) \quad \text{for all} \quad q\in[1,\infty)
\end{align}
exactly as in \eqref{psimq}. Using \eqref{psiellq} and the definition of $\mathbb{T}_\ell$ given by \eqref{extraStreeTensorx1}, we can in particular, pass to the limit $\ell\rightarrow \infty$ in the last term in \eqref{fokkerEnerXLimit}. Subsequently, we obtain
\eqref{energyEstKappaNeg1}
for all $t\in I$. Similar to \eqref{weakFPlimit1}, we also obtain \eqref{weakFPlimit0}
for all  $\varphi \in C^\infty (\overline{I}\times \R^3 \times \overline{B})$ by using \eqref{psiellq}. 
This completes the proof of Theorem \ref{thm:kappaReg}.

\section{The regularized system}
\label{sec:reg}
Let $\varrho>0$ be a fixed regularization kernel. Our ultimate goal in this section is to construct on $I \times \Omega_{\mathcal{R}^\varrho\eta^\varrho} \times B$, a weak solution to a version of our fluid-structure-kinetic system transported by a regularized material derivative
\begin{align}
\label{regMatDer}
\partial_t +\mathcal{R}^\varrho\bu^\varrho\cdot\nabx
\end{align}
similar to \cite{lengeler2014weak}. 
Here, $(\mathcal{R}^\varrho)_{\varrho>0}$ is a family of regularization operators
defined as the composition of a temporal regularisation on $I$ (which is symmetric and commutes with derivatives) and a mollification by a smooth kernel. For the later one to make sense we extend
the corresponding functions by zero to the whole space. 
In \eqref{regMatDer} $\eta^\varrho$  and $\bu^\varrho$ are corresponding approximate solutions of the shell equation \eqref{shellEq} and momentum equation \eqref{momEq} respectively. We now recall that unlike \cite{lengeler2014weak} where a linearized Koiter elastic energy is considered, we are working with the more realistic fully nonlinear energy. To be able to proceed therefore, additionally, we regularize the shell equation by the higher order linear term $\varrho\mathcal L'(\eta)$ where $\mathcal L(\eta)=\int_{\omega}|\nabla_\by^5\eta|^2\dy$ (which is to be interpreted in the sense that $\int_\omega \varrho\mathcal L'(\eta) \phi\dy =\int_\omega \varrho\nabla_\by^5\eta:\nabla_\by^5 \phi\dy $ for all $\phi\in W^{5,2}(\omega)$). Once the above construction is done, we can pass to the limit $\varrho\rightarrow 0$ to complete the proof of our main result, Theorem \ref{thm:main}. This will be done in the next section.
\\
Besides regularizing the shell equation by $\varrho\mathcal L'(\eta)$ as earlier explained, we require further, two main tools to achieve our goal for this section: a Galerkin procedure and a fixed-point argument. Unfortunately, we are required to have a fully linear system and require an additional regularization procedure in order to apply our choice of fixed-point argument. These two obstacles and their remedies leads to the following steps in achieving our goal:

The following steps give the line-by-line reasoning as to why we require \eqref{regMatDer}.
\begin{enumerate}
\item First of all, we note that one source of nonlinearity in our system comesfrom the convective term in the momentum equation \eqref{momEq}. Furthermore, the spatial domain on which a solution to our full system is defined depends on the solution itself which is problematic. We remedy these problems using the following.
\begin{enumerate}
\item We linearise the transport term of the momentum equation by replacing the material derivation $\partial_t +\bu\cdot\nabx$ with  $\partial_t +\bv\cdot\nabx$ where $\bv$ is a given velocity field. In addition, we replace the spatial domain with $\Omega_{\xi}$ where $\xi$ is a given function of time.
\item Because of the highly coupled nature of our fluid-structure-kinetic system, we are also required to replace the transport term of the Fokker--Planck equation with $\partial_t +\bv\cdot\nabx$. This makes physically sense since one can expect the transport of the solute to be affected by a change in transport of the solvent in the solution. Finally, we linearise the shell equation by replacing $K'(\eta)$ by $K'(\xi)$, which will be considered as part of the right-hand side.
\end{enumerate}
\item At this point, in theory, we should be able to apply a Gelerkin method to show that a solution to our modified-transport problem exists. Unfortunately, we are constrained by the low regularity of the boundary.  For this reason, we construct instead, a solution to our system on $\Omega_{\mathfrak{r}^\varrho\xi^\varrho}$ with material derivation  $\partial_t +\mathcal{R}^\varrho\bv^\varrho\cdot\nabx$ where $\varrho>0$ is a fixed regularization kernel. Here $\mathfrak r^\varrho$
is regularisation operator acting on the periodic functions defined on $\omega$ composed again with a a temporal regularisation on $I$.
\item Finally, for $\varrho>0$ fixed, we can use a Schauder-type fixed-point argument to the mapping 
$(\xi^\varrho,\bv^\varrho) \mapsto (\eta^\varrho, \bu^\varrho)$
where $\eta^\varrho$ and $\bu^\varrho$ are solutions to the decoupled regularised shell equation with data $(\xi^\varrho,\bv^\varrho)$ to be defined below in Section \ref{sec:decoupled}.
\end{enumerate}
Henceforth, we simply write $(\xi,\bv)$ in place of $(\xi^\varrho,\bv^\varrho)_{\varrho>0} $ (and the same for $(\eta, \bu)$) until the next section when we pass to the limit $\varrho \rightarrow 0$. However, to emphasize that our regularization is parametrized by $\varrho>0$, we maintain the notation $\mathcal{R}^\varrho$ in this chapter.\\
With the above introduction, we now make precise, our goal of this section.\\
We now seek to find $(\bu, \widehat{\psi}, \eta):=(\bu^\varrho, \widehat{\psi}^\varrho, \eta^\varrho)$ that solves the following system
\begin{align}
\divx \bu=0,
\label{contEqDCnonlinear}
\\
\partial_t^2
\eta + K'(\eta)+\varrho\mathcal L'(\eta) =g + \mathbf{F}\cdot \bm{\nu},
\label{sheeEqDCnonlinear}
\\
\partial_t \bu  + (\mathcal{R}^\varrho\bu \cdot \nabx)\mathbf{u} + \nabx   p
= 
\mu\Delta_\bx   \vu +\mathcal{R}^\varrho\divx   \mathbb{T}(M\widehat{\psi}) + \mathbf{f},
\label{momEqDCnonlinear}
\\
\partial_t (M\widehat{\psi})+ (\mathcal{R}^\varrho\bu \cdot \nabx) M\widehat{\psi}
=
\varepsilon \Delx(M\widehat{\psi})
- \sum_{i=1}^K
 \divqi  \big( M(\nabx   \mathcal{R}^\varrho\mathbf{u}) \bq_i\widehat{\psi}\big)
\nonumber
\\+
\frac{1}{4\lambda} \sum_{i=1}^K \sum_{j=1}^KA_{ij}\,  \divqi  \big( M \nabqj  \widehat{\psi} 
\big)&
\label{fokkerPlankDC}
\end{align}
in $I \times \Omega_{\mathfrak{r}^\varrho\eta(t)} \times B$ subject to the following initial and boundary conditions
\begin{align}
&\mathbf{u}(0, \cdot) = \mathbf{u}_0
&\quad \text{in }  \Omega_{\mathfrak{r}^\varrho\eta_0},
\label{initialDensitynonlinear}
\\
&\eta(0,\cdot) =\eta_0, \quad \partial_t \eta(0,\cdot)  =\eta_1
&\quad \text{in } {\omega}, \label{etaTzerononlinear}
\\
& \bu(t, \bm{\varphi}(\by)+\mathfrak{r}^\varrho\eta(t,\by) \bm{\nu}(\by)) =\partial_t \eta(t,\by) \bm{\nu}(\by)
&\quad \text{on } I \times {\omega} \label{inthesenseoftrace}
\end{align}
and  
\begin{align}
&\bigg[\frac{1}{4\lambda}  \sum_{j=1}^K A_{ij}\, M \nabqj  \widehat{\psi} - M(\nabx   \mathcal{R}^\varrho\bu ) \bq_i \widehat{\psi}
 \bigg] \cdot \bn_i =0
&\quad \text{on }I \times \Omega_{\mathfrak{r}^\varrho\eta(t)} \times \partial \overline{B}_i, \quad\text{for } i=1, \ldots, K,
\label{fokkerPlankBoundarynonlinear}
\\
& \naby \widehat{\psi} \cdot \bm{\nu}_{\mathfrak{r}^\varrho\eta} = \widehat{\psi}\big(\mathcal{R}^\varrho\bfu- (\partial_t\mathfrak{r}^\varrho\eta \bm{\nu})\circ \bm{\varphi}_{\mathfrak{r}^\varrho\eta}^{-1}\big) \cdot \bm{\nu}_{\mathfrak{r}^\varrho\eta}
&\quad \text{on }  I \times \partial \Omega_{\mathfrak{r}^\varrho\eta(t)} \times B, \label{fokkerPlankBoundaryNeumannnonlinear}
\\
&\widehat{\psi}(0, \cdot, \cdot) =\widehat{\psi}_0 \geq 0
& \quad \text{in }\Omega_{\mathfrak{r}^\varrho\eta_0} \times B,
\label{fokkerPlankIintialnonlinear}
\end{align}
where $\eta_0, \eta_1 : {\omega} \rightarrow \mathbb{R}$ and  $g: I \times {\omega} \rightarrow \mathbb{R}$ are given functions  and 
\begin{align}
\mathbf{F}(t,\by):= \Big(-2\mu\, \mathbb{D}_\by \bu(t,\by)
-
\mathcal{R}^\varrho\mathbb{T}(M\widehat{\psi})(t,\by)
+ p(t, \by) \mathbb{I}
 \Big)\bm{\nu} \circ \bm{\varphi}_{\mathfrak{r}^\varrho\eta(t)} \vert \mathrm{det} D  \bm{\varphi}_{\mathfrak{r}^\varrho\eta(t)} \vert.
\end{align}
Let us start with a precise definition of the solution. Note that as in \cite{lengeler2014weak} we have to rewrite the convective term in an uncommon way to preserve the a priori estimates.
\begin{definition}[Finite energy weak solution] \label{def:solreg}
Let $(\bff, g, \eta_0, \widehat{\psi}_0, \bu_0, \eta_1)$ be a dataset such that
\begin{equation}
\begin{aligned}
\label{datasetnonlinear}
&\bff \in L^2\big(I; L^2_{\mathrm{loc}}(\mathbb{R}^3; \mathbb{R}^{3})\big),\quad
g \in L^2\big(I; L^2(\omega)\big), \quad
\eta_0 \in W^{5,2}(\omega) \text{ with } \Vert \eta_0 \Vert_{L^\infty( \omega)} < L, 
\\
&\widehat{\psi}_0\in L^2\big( \Omega_{\eta_0}; L^2_M(B) \big), \quad
\vu_0\in L^2_{\mathrm{\divx}}(\Omega_{\eta_0}; \mathbb{R}^{3}) \text{ is such that }\mathrm{tr}_{\eta_0} \bu_0 =\eta_1 \gamma(\eta_0), \quad
\eta_1 \in L^2(\omega).
\end{aligned}
\end{equation} 
In addition, we assume
\begin{align}\label{datasetnonlinear2}
\Xi_0\in L^\infty(\Omega_{\eta_{0}}) \quad \text{where} \quad \Xi_0=\int_B M(\bq)\widehat{\psi}_0(\cdot,\bq) \dq \quad\text{in}\quad \Omega_{\eta_{0}}.
\end{align}
We call the triple
$(\bu,\widehat{\psi}, \eta )$
a \textit{finite energy weak solution} to the system \eqref{contEqDCnonlinear}--\eqref{fokkerPlankIintialnonlinear} with data\\ $(\bff, g, \eta_0, \widehat{\psi}_0, \bu_0, \eta_1)$ provided that the following holds:
\begin{itemize}
\item[(a)] the velocity $\vu$ satisfies
\begin{align*}
 \vu \in L^\infty \big(I; L^2(\Omega_{\mathfrak{r}^\varrho\eta(t)} ;\mathbb{R}^3) \big)\cap  L^2 \big(I; W^{1,2}_{\divx}(\Omega_{\mathfrak{r}^\varrho\eta(t)};\mathbb{R}^3) \big) 
 \quad \text{and} \quad \eqref{inthesenseoftrace}
\end{align*}
in the sense of traces and $\eta$ satisfies
\begin{align*}
\eta \in W^{1,\infty} \big(I; L^2(\omega) \big)\cap  L^\infty \big(I; W^{5,2}(\omega) \big) \quad \text{with} \quad \Vert \eta \Vert_{L^\infty(I \times \omega)} <L
\end{align*}
and for all  $(\phi, \bm{\varphi}) \in C^\infty(\overline{I}\times\omega) \times C^\infty(\overline{I}\times \R^3; \R^3)$ with $\phi(T,\cdot)=0$, $\bm{\varphi}(T,\cdot)=0$, $\divx \bm{\varphi}=0$ and $\mathrm{tr}_{\mathfrak{r}^\varrho\eta}\bm{\varphi}= \phi\bm{\nu}$, we have
\begin{align}
\int_I  \frac{\mathrm{d}}{\dt}\bigg(\int_{\Omega_{\mathfrak{r}^\varrho\eta(t)}}\vu  \cdot \bm{\varphi}\dx
&+\int_{\omega} \partial_t \eta \, \phi \dy
\bigg)\dt 
=\int_I  \int_{\Omega_{\mathfrak{r}^\varrho\eta(t)}}\big(  \bu\cdot \partial_t  \bm{\varphi} -\tfrac{1}{2}(\mathcal{R}^\varrho\bfu\cdot\nabla)\bu \cdot  \bm{\varphi}  +\tfrac{1}{2}(\mathcal{R}^\varrho\bfu\cdot\nabla)\bm{\varphi} \cdot\bu  \big) \dx\dt\nonumber
\\&
-  \int_I  \int_{\Omega_{\mathfrak{r}^\varrho\eta(t)}}
\big(\mu \nabla_\bx \bu :\nabx \bm{\varphi}+ \mathcal{R}^\varrho\mathbb{T}(M\widehat{\psi}) :\nabx\bm{\varphi}- \bff\cdot\bm{\varphi} \big) \dx\dt\label{distrho}\\
&+
\int_I \int_{\omega} \big(\partial_t \eta\, \partial_t\phi+\tfrac{1}{2}\partial_t \eta\, \partial_t\mathfrak r^\varrho\eta\,\phi
+g\, \phi \big)\dy\dt- \int_I \big(
\langle K'(\eta), \phi\rangle+
 \varrho\langle\mathcal L'(\eta), \phi \rangle \big)\dt
\nonumber
\end{align}
\item[(b)] the probability density function $ \widehat{\psi}$ satisfies:
\begin{align*}
&\widehat{\psi}\geq 0 \text{ a.e. in }  I \times \Omega_{\mathfrak{r}^\varrho\eta(t)}\times B,
\\ 
&
\widehat{\psi} \in L^\infty \big( I \times \Omega_{\mathfrak{r}^\varrho\eta(t)} ; L^1_M(B) \big),
\\&\mathcal{F}(\widehat{\psi} ) \in L^\infty \big( I; L^1(\Omega_{\mathfrak{r}^\varrho\eta(t)}; L^1_M(B))\big),
\\&
\sqrt{\widehat{\psi}} \in  L^2 \big( I; L^2(\Omega_{\mathfrak{r}^\varrho\eta(t)} ; W^{1,2}_M(B))\big) \cap L^2 \big( I; D^{1,2}(\Omega_{\mathfrak{r}^\varrho\eta(t)} ; L^2_M(B))\big),
\\&
 \Xi(t,\bx) = \int_BM \widehat{\psi}(t, \bx, \bq) \dq \in L^\infty\big(I \times \Omega_{\mathfrak{r}^\varrho\eta(t)}\big) \cap L^2\big(I; W^{1,2}(\mathfrak{r}^\varrho\eta(t)) \big);
\end{align*}
and for all  $\varphi \in C^\infty (\overline{I}\times \R^3 \times \overline{B} )$, we have
\[
\begin{split}
\int_I  \frac{\mathrm{d}}{\dt} \int_{\Omega_{\mathfrak{r}^\varrho\eta(t)} \times B}M \widehat{\psi} \, \varphi \dq \dx \dt &= \int_{I \times \Omega_{\mathfrak{r}^\varrho\eta(t)} \times B}\big(M \widehat{\psi} \,\partial_t \varphi 
+
M\mathcal{R}^\varrho\bu  \widehat{\psi} \cdot \nabx \varphi -
\varepsilon M\nabx \widehat{\psi} \cdot \nabx \varphi \big) \dq \dx \dt
\\&
+\sum_{i=1}^K \int_{I \times \Omega_{\mathfrak{r}^\varrho\eta(t)} \times B}
 \bigg( M(\nabx \mathcal{R}^\varrho  \bu ) \mathbf{q}_i\widehat{\psi}-
\sum_{j=1}^K \frac{A_{ij}}{4\lambda}   M \nabqj  \widehat{\psi} \bigg) \cdot \nabqi\varphi \dq \dx \dt;
\end{split}
\]
\item[(c)] for all $t\in I$, we have
\begin{equation}
\begin{aligned}
\label{energyEstnonlinear}
&\int_{\Omega_{\mathfrak{r}^\varrho\eta^\varrho(t)}}
\frac{1}{2} \vert \bu(t) \vert^2  \dx
+\mu\int_0^t
 \int_{ \Omega_{\mathfrak{r}^\varrho\eta^\varrho(\sigma)}}\vert \nabx \bu \vert^2 \dx\ds
 +
\int_{\omega}
\frac{1}{2}
 \vert \partial_t \eta (t) \vert^2  \dy
+ K(\eta(t))
\\&
+\varrho\int_{\omega}|\nabla_\by^5\eta(t)|^2\dy
+\Vert \Xi(t,\cdot) \Vert^2_{L^\infty(\Omega_{\mathfrak{r}^\varrho\eta^\varrho(t)})}
+
\varepsilon \int_0^t
 \int_{\Omega_{\mathfrak{r}^\varrho\eta^\varrho(\sigma)}}\vert \nabx \Xi \vert^2 \dx\ds
\\&
+
k\int_{\Omega_{\mathfrak{r}^\varrho\eta^\varrho(t)} \times B} 
M\, \mathcal{F} \big( \widehat{\psi}(t) \big)
\dq \dx
+
4k\,\varepsilon \int_0^t
 \int_{\Omega_{\mathfrak{r}^\varrho\eta^\varrho(\sigma)} \times B}
 M\Big\vert \nabx \sqrt{ \widehat{\psi}} \Big\vert^2
 \dq \dx\ds
  \\
&+
 \frac{kA_0}{\lambda}
 \int_0^t
 \int_{\Omega_{\mathfrak{r}^\varrho\eta^\varrho(\sigma)} \times B}
 M\Big\vert \nabq \sqrt{ \widehat{\psi}} \Big\vert^2
 \dq \dx\ds
 \\
&\lesssim \int_{\Omega_{\mathfrak{r}^\varrho\eta^\varrho_0}}
\frac{1}{2} \vert \bu_0 \vert^2  \dx
+
\int_{\omega}
\frac{1}{2}
 \vert \eta_1 \vert^2  \dy
+ K(\eta_{0})
+\varrho\int_{\omega}|\nabla_\by^5\eta_0|^2\dy
+
\Vert \Xi_0  \Vert^2_{L^\infty(\Omega_{\mathfrak{r}^\varrho\eta^\varrho_0})}
\\&
 +k
\int_{\Omega_{\mathfrak{r}^\varrho\eta_0^\varrho} \times B} 
M\, \mathcal{F} \big( \widehat{\psi}_0 \big) 
+
\frac{1}{2}
\int_0^t\int_{\Omega_{\mathfrak{r}^\varrho\eta^\varrho(\sigma)}}\mathbf{f}\cdot \bu \dx\ds 
+
\frac{1}{2}
\int_0^t\int_{\omega}g\,\partial_t \eta \dy\ds.
\end{aligned}
\end{equation}
\end{itemize}
\end{definition}
The following result establishes the existence of a solution, in the sense of Definition \ref{def:solreg} to the regularized system \eqref{contEqDCnonlinear}--\eqref{fokkerPlankIintialnonlinear}, see Remark \ref{rem:2212} for the case $\lim_{t\rightarrow T}\|\eta(t)\|_{L^\infty(\omega)}=L$.
\begin{theorem}
\label{thm:nonlinearSystem} Let $(\bff, g, \eta_0, \widehat{\psi}_0, \bu_0, \eta_1)$ be a dataset satisfying \eqref{datasetnonlinear} and \eqref{datasetnonlinear2}.
Then there is a \textit{finite energy weak solution} $(\bu,\widehat{\psi}, \eta )$ of the system \eqref{contEqDCnonlinear}--\eqref{fokkerPlankIintialnonlinear} on the interval $I=(0,T)$ in the sense of Definition \ref{def:solreg}. The number $T$ is restricted only if $\lim_{t\rightarrow T}\|\eta(t)\|_{L^\infty(\omega)}=L$.
\end{theorem}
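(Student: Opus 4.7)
The plan is to realise the solution as a fixed point of an operator
\begin{equation*}
\mathfrak{T}: (\xi, \bv) \longmapsto (\eta, \bu)
\end{equation*}
defined on a closed convex subset of a product space $X_\xi \times X_\bv$, where $X_\xi = C(\overline{I} \times \omega)$ and $X_\bv$ is a Bochner space adapted to the expected regularity of $\bu$. Given input data $(\xi,\bv)$ with $\|\xi\|_{L^\infty(I\times\omega)}\leq\theta L$ for some $\theta\in(0,1)$, the operator $\mathfrak{T}$ is built in two substeps. First, apply Theorem~\ref{thm:kappaReg} with shell displacement $\mathfrak r^\varrho\xi$ and convective field $\mathcal{R}^\varrho \bv$ (both smooth by the regularisation) to produce a finite-energy weak solution $\widehat{\psi}=\widehat{\psi}[\xi,\bv]$ of the Fokker--Planck equation together with the elastic stress $\mathbb{T}(M\widehat{\psi})$ controlled by \eqref{energyEstKappaNeg1}. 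Second, using $\widehat{\psi}$ as data, solve on $\Omega_{\mathfrak{r}^\varrho \xi(t)}$ the decoupled linear fluid--structure subproblem consisting of the continuity equation \eqref{contEqDCnonlinear}, the linear momentum equation obtained by replacing the convective velocity in \eqref{momEqDCnonlinear} by $\mathcal{R}^\varrho\bv$, and the shell equation \eqref{sheeEqDCnonlinear} in which $K'(\eta)$ is replaced by $K'(\xi)$ (treated as a datum on the right-hand side), coupled at the moving boundary by $\mathrm{tr}_{\mathfrak{r}^\varrho\xi}\bu=\partial_t\eta\,\bm{\nu}$.

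The decoupled fluid--structure subproblem is constructed by a Galerkin scheme in the spirit of \cite{lengeler2014weak, muha2019existence}: pick a countable system of divergence-free test fields $\bm{\varphi}_j$ on $\Omega_{\mathfrak{r}^\varrho\xi(t)}$ whose boundary traces are of the form $\phi_j\bm{\nu}$, build finite-dimensional approximations $(\bu^N,\eta^N)$, and observe that symmetrising the convective term $\tfrac12(\mathcal{R}^\varrho\bv\cdot\nabla)\bu\cdot\bm{\varphi}-\tfrac12(\mathcal{R}^\varrho\bv\cdot\nabla)\bm{\varphi}\cdot\bu$ as in \eqref{distrho} (together with the symmetrisation of the shell inertia using $\partial_t\mathfrak r^\varrho\eta$) preserves the energy balance in the presence of the moving boundary. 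Existence at each Galerkin level is elementary ODE theory; uniform bounds come from testing with $(\bu^N,\partial_t\eta^N)$, which yields control of $\bu^N$ in $L^\infty(I;L^2)\cap L^2(I;W^{1,2})$, of $\partial_t\eta^N$ in $L^\infty(I;L^2(\omega))$, and of $\eta^N$ in $L^\infty(I;W^{5,2}(\omega))$ via the added $\varrho\mathcal L'$ term, with the $K'(\xi)$ contribution absorbed by Gronwall using the fixed datum $\xi$. Passage $N\to\infty$ uses compactness of $\eta^N$ in $C(\overline I\times\omega)$ (Aubin--Lions on $\omega$ using the $W^{5,2}$/$L^2$ estimates) together with the variable-domain compactness machinery of \cite{lengeler2014weak} for $\bu^N$.

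Having defined $\mathfrak{T}$, we apply Schauder's fixed-point theorem. The energy estimate derived at the Galerkin limit (which is exactly \eqref{energyEstnonlinear} with the dissipation and stress terms of the Fokker--Planck side contributed by Theorem~\ref{thm:kappaReg} and balancing against the fluid via the identity that led to \eqref{referT}) yields invariance of a ball in $X_\xi\times X_\bv$ provided $T$ is such that $\|\eta\|_{L^\infty(I\times\omega)}<L$; if this bound fails as $t\to T$ we simply terminate. The regularising effect of the high-order term $\varrho\mathcal L'$, combined with the smoothness of $\mathcal{R}^\varrho$ and $\mathfrak r^\varrho$, gives $\eta\in W^{1,\infty}(I;L^2(\omega))\cap L^\infty(I;W^{5,2}(\omega))$ and sufficient time-regularity for $\bu$, so the image of $\mathfrak{T}$ is precompact in $X_\xi\times X_\bv$ by Aubin--Lions. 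Continuity of $\mathfrak{T}$ is obtained by extracting weakly convergent subsequences from both substeps and using the weak-to-weak stability of Theorem~\ref{thm:kappaReg} together with uniqueness of the linearised fluid--shell subproblem.

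The main obstacle is \textbf{twofold}. First, the coupling boundary condition on a moving, only Hölder-regular-in-time interface forces the Galerkin basis to respect the matched traces $\mathrm{tr}_{\mathfrak{r}^\varrho\xi}\bm{\varphi}_j=\phi_j\bm{\nu}$; this is precisely the construction of \cite{lengeler2014weak}, and here it is enabled by the smoothness of $\mathfrak r^\varrho\xi$. Second, and more subtly, in order to pass to the limit $N\to\infty$ and to ensure that the extra stress contributions balance exactly between the limiting momentum equation and the Fokker--Planck entropy identity (so that \eqref{energyEstnonlinear} holds), one must track the cancellation $\int\mathcal{R}^\varrho\mathbb T(M\widehat\psi):\nabla\bu\dx$ versus the entropy-production term on the Fokker--Planck side at the regularised level; this exploits the symmetry of $\mathcal R^\varrho$ (so that testing with $\mathcal R^\varrho$ followed by integration by parts produces the same object tested with $\mathcal R^\varrho\bu$), matching the velocity field used in Theorem~\ref{thm:kappaReg} with the one appearing in the momentum equation.
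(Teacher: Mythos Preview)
Your overall architecture matches the paper's: decouple by freezing $(\xi,\bv)$, solve the Fokker--Planck equation via Theorem~\ref{thm:kappaReg}, solve the linearised fluid--structure problem on $\Omega_{\mathfrak r^\varrho\xi}$ with $K'(\xi)$ as a datum and the extra $\varrho\mathcal L'$-regularisation, and then close by a fixed point. The Galerkin construction, the symmetrisation of the convective term, and the energy bookkeeping are all correctly identified.

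The genuine gap is in the fixed-point step. You invoke Schauder's theorem, which requires $\mathfrak T$ to be a well-defined \emph{single-valued} continuous map, and you justify continuity by appealing to ``uniqueness of the linearised fluid--shell subproblem''. But no such uniqueness is available here: neither Theorem~\ref{thm:kappaReg} for the Fokker--Planck part nor the linearised fluid--structure result (the paper's Theorem~\ref{thm:linearSystem}) comes with a uniqueness statement, and establishing one on a moving domain with the present weak formulation is not straightforward. Without uniqueness, $\mathfrak T$ is not even well defined as a map into $X_\xi\times X_\bv$, let alone continuous.

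The paper handles exactly this issue by replacing Schauder with a set-valued fixed-point theorem (a Kakutani--Glicksberg-type result, Theorem~\ref{lem:fix}): one defines $F(\xi,\bv)$ to be the \emph{set} of all $(\eta,\bu)$ solving the linearised problem and satisfying the energy inequality, checks that $F(\xi,\bv)$ is nonempty, convex and closed (linearity of the problem plus convexity of the left-hand side of the energy inequality), proves $F(\mathscr D)\subset\mathscr D$ by choosing $T^\ast$ small, and verifies upper semicontinuity and relative compactness of $F(\mathscr D)$ via the variable-domain compactness of \cite{lengeler2014weak}. You should either supply a uniqueness proof for both subproblems (nontrivial), or recast your argument in this set-valued framework.
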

The proof of Theorem \ref{thm:nonlinearSystem} will be given in Section \ref{subsec:fix}. It as based on a fixed point argument applied to a linearized system. The latter one will be introduced and analyzed in the next subsection.

\subsection{The linearised system}
\label{sec:decoupled}
Having constructed a solution $\widehat{\psi}$ to the Fokker--Planck equation in Section \ref{sec:SolveFokkerPlanck}, we now aim to derive a triple $(\bu, \widehat{\psi}, \eta)$ that solves the  linearized fluid-structure-kinetic system transported by a given velocity $\mathbf{v} \in  L^2_{\Div}(I \times \mathbb{R}^3 ;\mathbb{R}^3 )$ on a given shell function  $\xi \in C(\overline{I} \times  \omega )$, i.e., for a given pair $(\bv, \xi)$, we seek to find $(\bu, \widehat{\psi}, \eta)$ that solve the following system
\begin{align}
\divx \bu=0,
\label{contEqDC}
\\
\partial_t^2
\eta + K'(\xi)+\varrho\mathcal L'(\eta) =g + \mathbf{F}\cdot \bm{\nu}\label{sheeEqDC}
\\
\partial_t \bu  + (\mathcal{R}^\varrho\bv \cdot \nabx)\mathbf{u} + \nabx   p
= 
\mu\Delta_\bx   \vu +\mathcal{R}^\varrho\divx   \mathbb{T}(M\widehat{\psi}) + \mathbf{f},
\label{momEqDC}
\\
\partial_t (M\widehat{\psi}) + (\mathcal{R}^\varrho\bv \cdot \nabx) M\widehat{\psi}
=
\varepsilon \Delx (M \widehat{\psi})
- \sum_{i=1}^K
 \divqi  \big( M(\nabx   \mathcal{R}^\varrho\mathbf{v}) \bq_i\widehat{\psi} \big) 
 \nonumber
 \\+
\frac{1}{4\lambda} \sum_{i=1}^K \sum_{j=1}^K A_{ij}\,  \divqi  \big( M \nabqj  \widehat{\psi} 
\big)
\label{fokkerPlankDC}
\end{align}
in $I \times \Omega_{\mathfrak{r}^\varrho\xi(t)} \times B$ subject to the following initial and boundary conditions 
\begin{align}
&\mathbf{u}(0, \cdot) = \mathbf{u}_0
&\quad \text{in }  \Omega_{\mathfrak{r}^\varrho\xi_0},
\label{initialDensityVeloGivenT}
\\
&\eta(0,\cdot) =\eta_0, \quad \partial_t \eta(0,\cdot)  =\eta_1
&\quad \text{in }  \omega , \label{etaTzeroGivenT}
\\
& \bu(t, \bm{\varphi}(\by)+\mathfrak{r}^\varrho\xi(t,\by) \bm{\nu}(\by)) =\partial_t \eta(t,\by) \bm{\nu}(\by)
&\quad \text{on } I \times  \omega \label{uEtaNu}
\end{align}
and
\begin{align}
&\bigg[\frac{1}{4\lambda}  \sum_{j=1}^K A_{ij}\, M \nabqj  \widehat{\psi} - M (\nabx   \mathcal{R}^\varrho\bv ) \bq_i \widehat{\psi}
 \bigg] \cdot \bn_i =0
&\quad \text{on }I \times \Omega_{\mathfrak{r}^\varrho\xi(t)} \times \partial \overline{B}_i, \quad\text{for } i=1, \ldots, K,
\label{fokkerPlankBoundaryGivenFluid}
\\
& \varepsilon\naby\widehat{\psi} \cdot \bm{\nu}_{\mathfrak{r}^\varrho\xi} = \widehat{\psi}\big(\mathcal{R}^\varrho\bfv- (\partial_t\mathfrak{r}^\varrho\xi \bm{\nu})\circ \bm{\varphi}_{\mathfrak{r}^\varrho\xi}^{-1}\big) \cdot \bm{\nu}_{\mathfrak{r}^\varrho\xi}
&\quad \text{on }  I \times \partial \Omega_{\mathfrak{r}^\varrho\xi(t)} \times B, \label{fokkerPlankBoundaryNeumannGivenFluid}
\\
&\widehat{\psi}(0, \cdot, \cdot) =\widehat{\psi}_0 \geq 0
& \quad \text{in }\Omega_{\mathfrak{r}^\varrho\xi_0} \times B,
\label{fokkerPlankIintialGivenFluid}
\end{align}
where $\eta_0, \eta_1 :  \omega  \rightarrow \mathbb{R}$ and  $g: I \times  \omega  \rightarrow \mathbb{R}$ are given functions  and
\begin{align}
\mathbf{F}(t,\by):= \Big(-2\mu\, \mathbb{D}_\by \bu(t,\by)
-
\mathcal{R}^\varrho\mathbb{T}(M\widehat{\psi})(t,\by)
+ p(t, \by) \mathbb{I}
 \Big)\bm{\nu} \circ \bm{\varphi}_{\mathfrak{r}^\varrho\xi(t)} \vert \mathrm{det} D  \bm{\varphi}_{\mathfrak{r}^\varrho\xi(t)} \vert.
\end{align}
Let us start with a precise definition of the solution.
\begin{definition}[Finite energy weak solution] \label{def:solDecoupled}
Let $(\bff, g, \eta_0, \widehat{\psi}_0, \bu_0, \eta_1, \bv,\xi)$ be a dataset such that
\begin{equation}
\begin{aligned}
\label{datasetLinear}
&\bff \in L^2\big(\overline{I}; L^2_{\mathrm{loc}}(\mathbb{R}^3; \mathbb{R}^{3})\big),\quad
g \in L^2\big(\overline{I}; L^2({ \omega })\big), \quad
\eta_0 \in W^{5,2}({ \omega }) \text{ with } \Vert \eta_0 \Vert_{L^\infty( { \omega })} < L, 
\\
&\widehat{\psi}_0\in L^2\big( \Omega_{\eta_0}; L^2_M(B) \big), \quad
\vu_0\in L^2_{\mathrm{\divx}}(\Omega_{\eta_0}; \mathbb{R}^{3}) \text{ is such that }\mathrm{tr}_{\eta_0} \bu_0 =\eta_1 \gamma(\eta_0), \quad
\eta_1 \in L^2({ \omega }),
\\
&\mathbf{v} \in  L^2_{\Div}(I \times \mathbb{R}^3 ;\mathbb{R}^3 ), \quad \xi \in L^p(I;W^{4,p}(\omega))\quad \text{with} \quad \Vert \xi \Vert_{L^\infty(I \times { \omega })} <L,
\end{aligned}
\end{equation} 
where $p>2$.
In addition, we assume
\begin{align}\label{datasetLinear2}
\Xi_0\in L^\infty(\Omega_{\eta_{0}}) \quad \text{where} \quad \Xi_0=\int_B M(\bq)\widehat{\psi}_0(\cdot,\bq) \dq \quad\text{in}\quad \Omega_{\eta_{0}}.
\end{align}
We call the triple
$(\bu,\widehat{\psi} , \eta )$
a \textit{finite energy weak solution} to the system \eqref{contEqDC}--\eqref{fokkerPlankIintialGivenFluid} with data $(\bff, g, \eta_0,\widehat{\psi}_0, \bu_0, \eta_1, \bv,\xi)$ provided that the following holds:
\begin{itemize}
\item[(a)] the velocity $\vu$ satisfies
\begin{align*}
 \vu \in L^\infty \big(I; L^2(\Omega_{\mathfrak{r}^\varrho\xi(t)} ;\mathbb{R}^3) \big)\cap  L^2 \big(I; W^{1,2}_{\divx}(\Omega_{\mathfrak{r}^\varrho\xi(t)};\mathbb{R}^3) \big) 
 \quad \text{and} \quad \eqref{uEtaNu}
\end{align*}
in the sense of traces  and $\eta$ satisfies
\begin{align*}
\eta \in W^{1,\infty} \big(I; L^2({ \omega }) \big)\cap  L^\infty \big(I; W^{5,2}({ \omega }) \big) \quad \text{with} \quad \Vert \eta \Vert_{L^\infty(I \times { \omega })} <L
\end{align*}
and for all  $(\phi, \bm{\varphi}) \in C^\infty(\overline{I}\times\omega) \times C^\infty(\overline{I}\times \R^3; \R^3)$ with $\phi(T,\cdot)=0$, $\bm{\varphi}(T,\cdot)=0$, $\divx \bm{\varphi}=0$ and $\mathrm{tr}_{\mathfrak{r}^\varrho\xi}\bm{\varphi}= \phi\bm{\nu}$, we have
\begin{align}\begin{aligned}
\int_I  \frac{\mathrm{d}}{\dt}\bigg(&\int_{\Omega_{\mathfrak{r}^\varrho\xi(t)}}\vu  \cdot \bm{\varphi}\dx
+\int_{ \omega } \partial_t \eta \, \phi \dy
\bigg)\dt 
\\&=\int_I  \int_{\Omega_{\mathfrak{r}^\varrho\xi(t)}}\big(  \vu\cdot \partial_t  \bm{\varphi}  -\tfrac{1}{2}(\mathcal{R}^\varrho\bfv\cdot\nabla)\bu \cdot  \bm{\varphi}  +\tfrac{1}{2}(\mathcal{R}^\varrho\bfv\cdot\nabla)\bm{\varphi} \cdot\bu \big) \dx\dt
\\&
- \int_I  \int_{\Omega_{\mathcal{R}^\varrho\xi(t)}}\big(\mu \nabla_\bx\vu:\nabx \bm{\varphi} +\mathbb{T}(M\widehat{\psi}) :\nabx \mathcal{R}^\varrho\bm{\varphi}-\bff\cdot\bm{\varphi} \big) \dx\dt
\\&+
\int_I \int_{ \omega } \big(\partial_t \eta\, \partial_t\phi+\tfrac{1}{2}\partial_t \eta\, \partial_t\mathfrak r^\varrho\eta\,\phi +g\, \phi \big)\dy\dt-\int_I \big(\langle K'(\xi), \phi\rangle
 +\varrho\langle\mathcal L'(\eta), \phi \rangle\big)\dt.
\end{aligned}\label{eq:mometumreg}
\end{align}
\item[(b)] the probability density function $ \widehat{\psi}$ satisfies:
\begin{align*}
&\widehat{\psi}\geq 0 \text{ a.e. in }  I \times \Omega_{\mathfrak{r}^\varrho\xi(t)}\times B,
\\ 
&
\widehat{\psi} \in L^\infty \big( I ;L^1(\Omega_{\mathfrak{r}^\varrho\xi(t)} ; L^1_M(B)) \big),
\\&\mathcal{F}(\widehat{\psi} ) \in L^\infty \big( I; L^1(\Omega_{\mathfrak{r}^\varrho\xi(t)}; L^1_M(B))\big),
\\&
\sqrt{\widehat{\psi}} \in  L^2 \big( I; L^2(\Omega_{\mathfrak{r}^\varrho\xi(t)} ; W^{1,2}_M(B))\big) \cap L^2 \big( I; D^{1,2}(\Omega_{\mathfrak{r}^\varrho\xi(t)} ; L^2_M(B))\big),
\\&
 \Xi(t,\bx) = \int_BM \widehat{\psi}(t, \bx, \bq) \dq \in L^\infty\big(I \times \Omega_{\mathfrak{r}^\varrho\xi(t)}\big) \cap L^2\big(I; W^{1,2}(\Omega_{\mathfrak{r}^\varrho\xi(t)}) \big);
\end{align*}
and for all  $\varphi \in C^\infty (\overline{I}\times \R^3 \times \overline{B} )$, we have
\[
\begin{split}
\int_I  \frac{\mathrm{d}}{\dt} \int_{\Omega_{\mathfrak{r}^\varrho\xi(t)} \times B}M \widehat{\psi} \, \varphi \dq \dx \dt &= \int_{I \times \Omega_{\mathfrak{r}^\varrho\xi(t)} \times B}\big(M \widehat{\psi} \,\partial_t \varphi 
+
M\mathcal{R}^\varrho\bv  \widehat{\psi} \cdot \nabx \varphi -
\varepsilon M\nabx \widehat{\psi} \cdot \nabx \varphi \big) \dq \dx \dt
\\&
+\sum_{i=1}^K \int_{I \times \Omega_{\mathfrak{r}^\varrho\xi(t)} \times B}
 \bigg( M(\nabx \mathcal{R}^\varrho  \bv ) \mathbf{q}_i\widehat{\psi}-
\sum_{j=1}^K \frac{A_{ij}}{4\lambda}   M \nabqj  \widehat{\psi} \bigg) \cdot \nabqi\varphi \dq \dx \dt;
\end{split}
\]
\item[(c)] for all $t\in I$, we have 
\begin{align}
\int_{\Omega_{\mathfrak{r}^\varrho\xi(t)}}&
\frac{1}{2} \vert \bu(t) \vert^2  \dx
+
\int_{ \omega }
\frac{1}{2}\vert \partial_t \eta(t) \vert^2  \dy
+\varrho\int_{\omega}|\nabla_\by^5\eta(t)|^2\dy
+
\mu\int_0^t
 \int_{\Omega_{\mathfrak{r}^\varrho\xi(\sigma)}}\vert \nabx \bu \vert^2 \dx\ds
 \nonumber
\\&
 \leq 
\int_{\Omega_{\mathfrak{r}^\varrho\xi_0}}
\frac{1}{2} \vert \bu_0 \vert^2  \dx
+
\int_{ \omega }
\frac{1}{2}\vert  \eta_1 \vert^2  \dy
+\varrho\int_{\omega}|\nabla_\by^5\eta_0|^2\dy
+\int_0^t
\int_{ \omega } g\,\partial_\sigma\eta\dy\ds+\int_0^t
K'(\xi)\,\partial_\sigma\eta\ds
\nonumber
\\
&+
 \int_0^t\int_{\Omega_{\mathfrak{r}^\varrho\xi(\sigma)}}
\mathbf{f}\cdot\bu\dx\ds 
-\int_0^t\int_{\Omega_{\mathfrak{r}^\varrho\xi(\sigma)}}\mathbb T(M\widehat{\psi}):\nabx\mathcal{R}^\varrho\bfu\dxs.
\label{energyEstY}
\end{align}
\item[(d)] we have the estimate
\begin{equation}
\begin{aligned}
\label{energyEstKappa}
&
\Vert \Xi(t,\cdot) \Vert^2_{L^\infty(\Omega_{\mathfrak{r}^\varrho\xi(t)})}
 +
k \int_{\Omega_{\mathfrak{r}^\varrho\xi(t)} \times B} 
M\, \mathcal{F} \big( \widehat{\psi}(t, \cdot, \cdot) \big)
\dq \dx
+
\varepsilon \int_0^t
 \int_{\Omega_{\mathfrak{r}^\varrho\xi(\sigma)}}\vert \nabx \Xi \vert^2 \dx\ds
\\&
+
 4k\,\varepsilon \int_0^t
 \int_{ \Omega_{\mathfrak{r}^\varrho\xi(\sigma)} \times B}
 M\Big\vert \nabx \sqrt{ \widehat{\psi}} \Big\vert^2
 \dq \dx\ds
+
 \frac{kA_0}{\lambda} \int_0^t
 \int_{\Omega_{\mathfrak{r}^\varrho\xi(\sigma)} \times B}
 M\Big\vert \nabq \sqrt{ \widehat{\psi} } \Big\vert^2
 \dq \dx\ds
 \\&
 \lesssim 
\Vert \Xi_0  \Vert^2_{L^\infty(\Omega_{\mathfrak{r}^\varrho\xi(0)})}
+
k \int_{\Omega_{\mathfrak{r}^\varrho\xi(0)}\times B} 
M\, \mathcal{F} \big( \widehat{\psi}_0 \big)
\dq \dx  
+
\int_0^t \ \int_{\Omega_{\mathfrak{r}^\varrho\xi(\sigma)} }
\mathbb{T}(M\widehat{\psi}) :\nabx\mathcal{R}^\varrho \bv
  \dx\ds .
\end{aligned}
\end{equation}
for all $t\in I$.
\end{itemize}
\end{definition}
The following result establishes the existence of a solution, in the sense of Definition \ref{def:solDecoupled} to the linearized system \eqref{contEqDC}--\eqref{fokkerPlankIintialGivenFluid}, see Remark \ref{rem:2212} for the case $\lim_{t\rightarrow T}\|\eta(t)\|_{L^\infty(\omega)}=L$.
\begin{theorem}
\label{thm:linearSystem}
Let $(\bff, g, \eta_0, \widehat{\psi}_0, \bu_0, \eta_1, \bv,\xi)$ be a dataset satisfying \eqref{datasetLinear} and \eqref{datasetLinear2}.
Then there is a \textit{finite energy weak solution} $(\bu,\widehat{\psi}, \eta )$ of the system  \eqref{contEqDC}--\eqref{fokkerPlankIintialGivenFluid} on the interval $I=(0,T)$ in the sense of Definition \ref{def:solDecoupled}. The number $T$ is restricted only if $\lim_{t\rightarrow T}\|\eta(t)\|_{L^\infty(\omega)}=L$.
\end{theorem}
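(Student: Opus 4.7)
The system \eqref{contEqDC}--\eqref{fokkerPlankIintialGivenFluid} decouples into two subproblems that can be solved sequentially, since the transport field in the Fokker--Planck equation \eqref{fokkerPlankDC} is the given regularized velocity $\mathcal{R}^\varrho\bv$ rather than the unknown $\bu$. My plan is therefore:

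\textbf{Step 1 (Fokker--Planck).} Apply Theorem \ref{thm:kappaReg} with data $(\mathcal{R}^\varrho\bv,\,\mathfrak{r}^\varrho\xi,\,\widehat\psi_0)$. The temporal--spatial mollification in $\mathcal R^\varrho$ guarantees $\mathcal{R}^\varrho\bv\in L^\infty(I;W^{1,\infty}_{\divx}(\Omega_{\mathfrak{r}^\varrho\xi};\R^3))$, while $\mathfrak{r}^\varrho\xi\in C^3(\overline I\times\omega)$ follows from $\xi\in L^p(I;W^{4,p}(\omega))$, $p>2$, so the hypotheses \eqref{datasetReg0}, \eqref{datasetReg}--\eqref{XI0} hold. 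This yields $\widehat\psi$ satisfying (b) and the entropy estimate \eqref{energyEstKappa} of Definition \ref{def:solDecoupled}.

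\textbf{Step 2 (fluid--structure).} With $\widehat\psi$ in hand the source $\mathcal{R}^\varrho\mathbb T(M\widehat\psi)$ and the forcing $K'(\xi)$ are prescribed, and the remaining subsystem \eqref{contEqDC}, \eqref{sheeEqDC}, \eqref{momEqDC} is linear in $(\bu,\eta)$. I follow the Galerkin construction of \cite{lengeler2014weak}, modified to accommodate the fifth--order regularizer $\varrho\mathcal L'(\eta)$ as in \cite{muha2019existence}. Work in the time--dependent coupled test space
\[
X_{\mathfrak{r}^\varrho\xi(t)}=\big\{(\bm\varphi,\phi)\in W^{1,2}_{\divx}(\Omega_{\mathfrak{r}^\varrho\xi(t)};\R^3)\times W^{5,2}(\omega):\mathrm{tr}_{\mathfrak{r}^\varrho\xi(t)}\bm\varphi=\phi\bm\nu\big\},
\]
and build a finite--dimensional basis by pulling back a fixed reference basis on $\Omega\times\omega$ through the diffeomorphism $\bm\Psi_{\mathfrak{r}^\varrho\xi(t)}$ from Section \ref{subsec:shell}; this basis inherits the $C^3_\bx C^\infty_t$--regularity of $\mathfrak{r}^\varrho\xi$. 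The resulting Galerkin ODE system is linear, hence globally solvable. The uniform bound \eqref{energyEstY} follows from testing with the discrete solution itself: the antisymmetric splitting $\tfrac12(\mathcal{R}^\varrho\bv\cdot\nabla)\bu\cdot\bm\varphi-\tfrac12(\mathcal{R}^\varrho\bv\cdot\nabla)\bm\varphi\cdot\bu$ in \eqref{eq:mometumreg} annihilates the convective contribution without demanding $\mathcal{R}^\varrho\bv\cdot\bm\nu_{\mathfrak{r}^\varrho\xi}=0$ on the boundary, while the additional term $\tfrac12\partial_t\eta\,\partial_t\mathfrak{r}^\varrho\eta\,\phi$ cancels precisely the boundary contribution $\tfrac12\int_{\partial\Omega_{\mathfrak{r}^\varrho\xi}}|\bu|^2(\partial_t\mathfrak{r}^\varrho\xi\bm\nu)\circ\bm\varphi_{\mathfrak{r}^\varrho\xi}^{-1}\cdot\bm\nu_{\mathfrak{r}^\varrho\xi}\,\mathrm d\sigma$ arising from Reynolds' transport theorem \eqref{reynold1}. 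The $\varrho\mathcal L'$--term is positive and linear, contributing $\varrho\|\nabla^5_\by\eta\|_{L^2(\omega)}^2$ to the left--hand side.

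\textbf{Passage to the limit.} With $\mathfrak{r}^\varrho\xi$ smooth in space--time, standard Aubin--Lions compactness on the reference domain (after pulling back via $\bm\Psi_{\mathfrak{r}^\varrho\xi}$) yields strong convergence of Galerkin approximations $\bu^n\to\bu$ in $L^2(I;L^2)$ and $\partial_t\eta^n\to\partial_t\eta$ in $L^2(I\times\omega)$; the extra regularization $\varrho\mathcal L'$ provides $\eta\in L^\infty(I;W^{5,2}(\omega))$ which, by Sobolev embedding, even gives $\eta\in L^\infty(I;C^3(\omega))$ uniformly in $n$. All terms in \eqref{eq:mometumreg} are linear in $(\bu,\eta)$ (with $K'(\xi)$, $\mathbb T(M\widehat\psi)$, $\bff$, $g$ known data), so the limit passage is routine.

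\textbf{Main obstacle.} I expect the delicate point to be the construction of the coupled Galerkin basis with the boundary--matching condition $\mathrm{tr}_{\mathfrak{r}^\varrho\xi(t)}\bm\varphi=\phi\bm\nu$ preserved exactly at the discrete level while keeping the time derivatives of the basis functions bounded uniformly. This is resolved by defining the basis on the reference configuration $\Omega\times\omega$ and transporting it through $\bm\Psi_{\mathfrak{r}^\varrho\xi(t)}$, exploiting that $\mathfrak{r}^\varrho\xi\in C^\infty_t\cap C^3_\bx$; the temporal derivatives of the transformed basis are then controlled by $\|\partial_t\mathfrak{r}^\varrho\xi\|_{C^3(\omega)}$, which is finite for fixed $\varrho>0$. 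A secondary technical point is verifying \eqref{energyEstKappa} on the regularized domain $\Omega_{\mathfrak{r}^\varrho\xi}$; this follows verbatim from the proof of Theorem \ref{thm:kappaReg} since $\mathfrak{r}^\varrho\xi$ satisfies the hypothesis \eqref{datasetReg0} in place of $\xi$.
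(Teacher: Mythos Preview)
Your proposal is correct and follows essentially the same approach as the paper: decouple the system, apply Theorem~\ref{thm:kappaReg} with data $(\mathcal R^\varrho\bv,\mathfrak r^\varrho\xi,\widehat\psi_0)$ to obtain $\widehat\psi$, and then solve the remaining linear fluid--structure problem by the Galerkin scheme of \cite[Prop.~3.27]{lengeler2014weak}, treating $K'(\xi)\in L^2(I\times\omega)$ as part of the forcing $g$ and noting that the fifth-order operator $\varrho\mathcal L'$ plays the same structural role as the biharmonic one there. The paper's proof is considerably more terse---it cites \cite{lengeler2014weak} for the entire fluid--structure step---whereas you spell out the basis construction, the energy cancellations, and the limit passage in more detail; in substance the two arguments coincide.
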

\begin{proof}
In the linearized problem, the Fokker--Planck equation is decoupled from the fluid-structure problem. Consequently, both can be solved independently. On the one hand, given $(\widehat{\psi}_0, \eta_1, \bv,\xi)$, let $\widehat{\psi}$ be the solution to the Fokker--Planck equation from Theorem \ref{thm:kappaReg}.  On the other hand, given $\widehat{\psi}$, the analysis of the fluid-structure problem (with right-hand side $\mathbf{f}+\mathcal{R}^\varrho\Div\mathbb T(M\widehat{\psi})$ in the fluid equation) is very similar to that of \cite[Prop. 3.27]{lengeler2014weak}
which is based on a finite-dimensional Galerkin approximation. 
The distribution $K'(\xi)$ can be represented by a function belonging to $L^2(I\times \omega)$ as $\xi\in L^p(I;W^{4,p})$ (at least if we choose $p$ large enough)
and can be put together with the forcing $g$. The higher order operator $\mathcal L'$ does not change the analysis compared to $\Delta^2_{\bx}$ (apart from the need for higher order Sobolev spaces).
 As a consequence, the problem can be treated as in \cite{lengeler2014weak}. In particular, we obtain the energy inequality \eqref{energyEstY} (in fact, the energy inequality in \cite[Prop. 3.27]{lengeler2014weak} slightly differs from \eqref{energyEstY} but a trivial modification leads to the required form).
\end{proof}

\subsection{A fixed point argument.}
\label{subsec:fix}
We now seek a fixed point of the solution map $(\bfv,\xi)\mapsto (\bfu,\eta)$ on $L^2(I,L^2_{\Div}(\mathbb R^3))\times L^p(I;W^{4,p}(\omega))$ from Theorem \ref{thm:linearSystem}. Note that we extend $\bfu$ by zero to $\R^3$. The resulting function is not weekly differentibale anymore but still divergence-free (in the sense of distributions).
Since we do not know about uniqueness of the solutions constructed in Theorem~\ref{thm:linearSystem}, we will use the following fixed point theorem for set-valued mappings.
\begin{theorem}[\cite{granis2003fixed}]\label{lem:fix}
Let $C$ be a convex subset of a normed
vector space $Z$, let $\mathfrak P (C)$ be the power set of $C$ and let $F:C\rightarrow \mathfrak P (C)$ be an upper-semicontinuous set-valued
mapping, that is, for every open set $W\subset C$ the set $\{c\in C:\,\, F(c)\in W\}\subset C$ is
open. Moreover, let $F(C)$ be contained in a compact subset of $C$, and let $F(c)$ be
non-empty, convex and compact for all $c\in C$. Then F possesses a fixed point, that
is, there exists some $c_0\in C$ with $c_0\in F(c_0)$.
\end{theorem}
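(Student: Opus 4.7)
The strategy is the classical Cellina--Kakutani--Fan approach: approximate $F$ by continuous single-valued maps, apply the single-valued Schauder fixed point theorem to each approximation, and then pass to the limit using upper semi-continuity together with convexity of the values.

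First, I would reduce to a compact convex domain. Let $K_0\subset C$ be a compact set with $F(C)\subset K_0$ (which exists by hypothesis). Working (without loss of generality) in the completion of $Z$, Mazur's theorem implies that the closed convex hull $K:=\overline{\mathrm{co}}(K_0)$ is compact. Since $C$ is convex and $K_0\subset C$, the set $K$ is contained in $\overline C$, and the constructions below will only produce convex combinations of points of $K_0\subset C$, so $F$ may be viewed as an upper-semicontinuous mapping $K\rightarrow\mathfrak P(K)$ with non-empty, compact, convex values. Standard arguments then show the graph of $F$ is closed in $K\times K$.

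Next, I would construct a continuous approximation $f_\varepsilon:K\rightarrow K$ for each $\varepsilon>0$ by a partition-of-unity argument. Upper semi-continuity of $F$ provides, for every $c\in K$, a radius $\delta(c)\in(0,\varepsilon)$ with $F(B(c,\delta(c)))\subset F(c)+B(0,\varepsilon)$. Extracting a finite subcover $\{B(c_i,\delta_i)\}_{i=1}^N$, letting $\{\phi_i\}$ be a subordinate partition of unity on $K$, and choosing $y_i\in F(c_i)$, I set
\[
f_\varepsilon(c):=\sum_{i=1}^N\phi_i(c)\,y_i.
\]
Then $f_\varepsilon$ is continuous, and since each $y_i\in F(c_i)\subset K_0\subset K$, one has $f_\varepsilon(K)\subset\mathrm{co}(K_0)\subset K$. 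Crucially, whenever $\phi_i(c)>0$ one has $c\in B(c_i,\delta_i)$ and hence $\|c-c_i\|<\varepsilon$, so $f_\varepsilon(c)$ is a convex combination of elements lying in $F$-values at points within distance $\varepsilon$ of $c$. Schauder's fixed point theorem, applied to $f_\varepsilon$ on the compact convex set $K$, yields some $c_\varepsilon\in K$ with $f_\varepsilon(c_\varepsilon)=c_\varepsilon$.

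Finally, I would pass to the limit. Pick $\varepsilon_n\downarrow 0$; compactness of $K$ yields a subsequence with $c_{\varepsilon_n}\rightarrow c_0\in K$. For each $n$, $c_{\varepsilon_n}=\sum_i\phi_i^{(n)}(c_{\varepsilon_n})\,y_i^{(n)}$ with $y_i^{(n)}\in F(c_i^{(n)})$ and $\|c_i^{(n)}-c_{\varepsilon_n}\|<\varepsilon_n$ on the active indices. Given any open convex neighbourhood $W$ of $F(c_0)$, upper semi-continuity produces $\delta>0$ such that $F(c)\subset W$ whenever $\|c-c_0\|<\delta$; for $n$ sufficiently large all active $c_i^{(n)}$ lie in $B(c_0,\delta)$, hence all active $y_i^{(n)}\in W$, and by convexity of $W$ we get $c_{\varepsilon_n}=f_{\varepsilon_n}(c_{\varepsilon_n})\in W$. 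Thus $c_0\in\overline W$; since $W$ can be chosen arbitrarily close to the closed convex set $F(c_0)$, we conclude $c_0\in F(c_0)$.

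The main obstacle is the final limit passage: convergence of $c_{\varepsilon_n}$ alone does not directly yield membership in $F(c_0)$, because the $y_i^{(n)}$ come from possibly many different fibres $F(c_i^{(n)})$. The crucial ingredients are the convexity of the values of $F$ (which lets us test against convex neighbourhoods $W$ of $F(c_0)$) together with the shrinking-neighbourhood formulation of upper semi-continuity. A secondary technical point is the use of Mazur's theorem to guarantee compactness of $\overline{\mathrm{co}}(K_0)$; this is sidestepped either by passing to the completion of $Z$ or by noting that every point produced by the scheme actually lies in $\mathrm{co}(K_0)\subset C$.
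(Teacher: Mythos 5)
The paper offers no proof of Theorem \ref{lem:fix} --- it is quoted from \cite{granis2003fixed} --- so your argument has to stand on its own. Its overall route (Cellina-type single-valued approximation of $F$ via a partition of unity, Schauder's theorem for the approximations $f_\varepsilon$, then a limit passage using convexity of the values and the $\varepsilon$--$\delta$ form of upper semicontinuity) is the standard proof, and the final limit computation is correct \emph{given} that the limit point lies in $C$. The genuine gap is in the reduction to $K=\overline{\mathrm{co}}(K_0)$. The theorem assumes only that $C$ is convex (not closed) in a possibly incomplete normed space, so $K$, formed in the completion so that Mazur's theorem applies, satisfies only $\mathrm{co}(K_0)\subset C$ and $K\subset\overline C$; it need not be contained in $C$. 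Hence the sentence ``$F$ may be viewed as an upper-semicontinuous mapping $K\rightarrow\mathfrak P(K)$'' is unjustified: $F$ is undefined on $K\setminus C$, so the radii $\delta(c)$, the selections $y_i\in F(c_i)$ and the claimed closed graph in $K\times K$ only make sense at points of $C$. The choice of centres is repairable (pick the $c_i$ in the dense subset $\mathrm{co}(K_0)$ of $K$), but the decisive problem is at the end: the limit $c_0$ of the approximate fixed points $c_{\varepsilon_n}\in\mathrm{co}(K_0)$ is only known to lie in $K$, not in $C$, so $F(c_0)$ need not be defined and the concluding step --- the only place where the fixed-point property is obtained --- collapses. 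Your remark that the construction ``only produces convex combinations of points of $K_0$'' covers the points $c_{\varepsilon_n}$ themselves, not their limit.

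The fix requires rearranging the approximation rather than a remark, because the Cellina scheme gives no control on $\mathrm{dist}(c_\varepsilon,F(c_\varepsilon))$ (the USC radius at $c_\varepsilon$ is unrelated to $\varepsilon$). The classical repair (Himmelberg) is: take a finite $\varepsilon$-net $x_1,\dots,x_m\in K_0$ of the compact set $K_0$, let $P=\mathrm{co}\{x_1,\dots,x_m\}\subset C$ and let $\pi_\varepsilon$ be the associated Schauder projection with $\|\pi_\varepsilon(x)-x\|<\varepsilon$ on $K_0$; apply the finite-dimensional Kakutani theorem on $P$ to $c\mapsto\mathrm{co}\big(\pi_\varepsilon(F(c))\big)$. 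Since $F(c_\varepsilon)$ is convex, $\mathrm{co}\big(\pi_\varepsilon(F(c_\varepsilon))\big)\subset F(c_\varepsilon)+B(0,\varepsilon)$, so one obtains $c_\varepsilon\in P\subset C$ and $y_\varepsilon\in F(c_\varepsilon)\subset K_0$ with $\|c_\varepsilon-y_\varepsilon\|<\varepsilon$; the limit is then taken along the $y_\varepsilon$ inside the compact set $K_0\subset C$, and upper semicontinuity is only ever invoked at points of $C$, after which your own closing argument applies verbatim. If you are willing to add the hypotheses that $Z$ is complete and $C$ is closed (so that $K\subset C$), your proof is complete as written --- but these hypotheses are not part of the statement, and the set $\mathscr D$ to which the theorem is applied in Section \ref{subsec:fix} is not obviously closed in the ambient space, so the general version is the one actually needed.
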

We consider the interval $I_*=(0,T_*)$ with $T_*$ sufficiently small to be chosen later. We consider the set 
\begin{align*}
\mathscr D:=\Bigg\{(\xi,\bfv)\in L^p(I_*;W^{4,p}(\omega))\cap C(\overline I_*\times\omega)\times L^2(I_*,L^2_{\Div}(\R^3)):\,\,\begin{array}{c}\|\xi\|_{L^p(I_*;W^{2,p}(\omega))}\leq M_1,\\\|\xi\|_{L^\infty(I_*\times\omega)}\leq M_1,\,\xi(0)=\eta_0,\,\,\\\|\bfv\|_{L^2(I_\ast \times \R^3)}\leq M_2\end{array}\Bigg\}
\end{align*}
for $M_1=(\|\eta_0\|_{L^\infty( \omega)}+L)/2$ and $M_2>0$ to be chosen later. Note that the coupling at the boundary between velocity and shell is not contained in the definition of $\mathscr D$. This is a feature which one only gains via the fixed point and not before. Let
\begin{align*}
F:\mathscr{D}\rightarrow \mathfrak P(\mathscr{D})
\end{align*}
with 
\[
F:(\bfv, \xi)\mapsto \Big\{(\bfu,\eta):\,(\bfu,\eta)\text{ solves }\eqref{eq:mometumreg}\text{ with $(\bfv, \xi)$ and satisfies \eqref{energyEstY}}\Big\}.
\]
Note that when we say $(\bfu,\eta)$ solves \eqref{eq:mometumreg} with $(\bfv, \xi)$ this has to be understood in the sense that there is a solution $\psi$ to the Fokker-Planck equation (with data $(\bfv, \xi)$) which satisfies the corresponding energy inequality (existence of $\psi$ is guaranteed by Theorem \ref{thm:kappaReg}).
First of all, we note that the image of $F$ is nonempty due to Theorem \ref{thm:linearSystem}.
Next, we have to check that $F(\mathscr D)\subset \mathscr D$. Given $(\bfv,\xi)$, we can solve the Fokker-Planck equation with data $(\mathcal R^\varrho\bv,\mathfrak{r}^\varrho \xi, \widehat{\psi}_0)$ by means of Theorem \ref{thm:kappaReg}. 
In particular, we have 
\begin{align*}
\int_{\Omega_{\mathfrak{r}^\varrho\xi(t)} \times B} M\big\vert \widehat{\psi} \big\vert^2\dq \dx\leq\,c(M_1,M_2,\varrho)
\end{align*}
uniformly in $t$.
This yields
\begin{align*}
\int_0^{T^\ast}\int_{\Omega_{\mathfrak{r}^\varrho\xi(t)} \times B} M\big\vert \widehat{\psi} \big\vert^2\dq \dxt\leq\,T^\ast c(M_1,M_2,\varrho)\leq 1,
\end{align*}
provided we choose $T^\ast$ small enough (depending on $M_1$ and $M_2$ and $\varrho$) and hence
\begin{align}
\label{eq:0506first}
\int_0^{T^\ast}\int_{\Omega_{\mathfrak{r}^\varrho\xi(t)} \times B} |\mathcal R^\varrho\Div\mathbb T(M\widehat{\psi}) |^2\dq \dxt\leq \,c(\varrho)\int_0^{T^\ast}\int_{\Omega_{\mathfrak{r}^\varrho\xi(t)} \times B} |\mathbb T(M\widehat{\psi}) |^2\dq \dxt\leq\,c(\varrho).
\end{align}
Now we use the a priori estimate from Theorem \ref{thm:linearSystem} to conclude
\begin{equation}
 \begin{aligned}\label{eq:0506}
\sup_{I_\ast}\int_{\Omega_{\mathfrak{r}^\varrho\xi(t)}}|\bfu|^2\dx
&+\int_{I_\ast}\int_{\Omega_{\mathfrak{r}^\varrho\xi(t)}}|\nabx\bfu|^2\dxs+\sup_{I_\ast}\int_{ \omega }\big(|\partial_t\eta|^2
+
|\nabla_\by^5\eta|^2\big)\dy
\\&\leq\,c(\mathbf{f},g,K'(\xi),\bu_0,\eta_0,\eta_1,\varrho)\leq\,c(M_1)
\end{aligned}
\end{equation}
independently of $M_2$ and the size of $I_\ast$. This implies that $\eta\in C^\alpha(\overline I\times  \omega )$, by Sobolev embedding for some $\alpha>0$, with H\"older norm independent of $M_1$ and $M_2$. We obtain
\begin{align}
\label{constarint-eta}
|\eta(t,x)|\leq |\eta(t,x)-\eta_0(x)|+|\eta_0(x)|\leq c(M_1) (T^*)^\alpha+\|\eta_0\|_{L^\infty(\omega)}.
\end{align}
Therefore, we find for $T^*$ small enough (but independent of $\bfv$ and $\xi$) such that
\[
\|\eta\|_{L^\infty(I_*\times \omega)}\leq M_1.
\]
Hence we gain $F(\mathscr D)\subset \mathscr D$ for an appropriate choice of $M_2\in \mathbb R_+$.
Similarly, we obtain
\begin{align*}
\bigg(\int_{I_\ast}\|\eta\|_{W^{4,p}(\omega)}^p\dt\bigg)^{\frac{1}{p}}\leq\, (T^*)^{\frac{1}{p}}\|\eta\|_{L^\infty(I_\ast;W^{4,p}(\omega))}\leq\,c\, (T^*)^{\frac{1}{p}}\|\eta\|_{L^\infty(I_\ast;W^{5,2}(\omega))}\leq\,M_1
\end{align*}
for $T^*$ small enough using Sobolev's embedding and \eqref{eq:0506}.
\\
Next, since the problem is linear and the left-hand side of the energy inequality is convex, we find that $F(\xi,\bfv)$ is a convex and closed subset of $\mathscr D$. Also, we obtain upper-semicontinuity of the set-valued mapping. It remains to show that $F(\mathscr D)$ is relatively compact.
 Consider $(\eta_n,\bu_n)_{n\in \mathbb{N}}\subset F(\mathscr D)$. Then there exists a corresponding sequence $(\xi_n,\bv_n)_{n\in \mathbb{N}}\subset \mathscr D$, such that $(\eta_n,\bu_n)$ solves \eqref{eq:mometumreg}, with respect to $(\bv_n,\xi_n)$. The corresponding solution to the Fokker-Planck equation will be denoted by $\widehat{\psi}_n$ .
Due to the estimates above, we may choose subsequences such that
\begin{align}
\label{conv1}
\eta_n&\rightharpoonup^\ast\eta\quad\text{in}\quad L^\infty(I_*,W^{5,2}( \omega )),
\\
\label{conv2}
\partial_t\eta_n&\rightharpoonup^\ast\partial_t\eta\quad\text{in}\quad L^\infty(I_*,L^{2}( \omega ))),
\\
\label{conv3}
\eta_n&\rightarrow\eta\quad\text{in}\quad L^2(I_*,W^{2,2}( \omega )),
\\
\label{conv3orig}
\bfu_n&\rightharpoonup^{\ast,\eta}\bfu\quad\text{in}\quad L^\infty(I_*;L^2(\Omega_{\mathfrak{r}^\varrho\xi(t)}; \R^3)),
\\
\label{conv4orig} 
\nabx \bu_n&\rightharpoonup^\eta \nabx \bu\quad\text{in}\quad L^2(I_*;L^2(\Omega_{\mathfrak{r}^\varrho\xi(t)} ; \R^{3\times3})),
\\
\mathcal R^\varrho\Div\mathbb T(M\widehat{\psi}_n)&\rightharpoonup\mathcal R^\varrho\Div\mathbb T(M\widehat{\psi})\quad\text{in}\quad L^2(I_*\times \Omega_{\mathfrak{r}^\varrho\xi(t)}; \R^3),
\end{align}
where we also used linearity of $\mathbb T$.
The compactness of $\eta_n$ in $C(\overline I_*\times \omega)$ follows immediately by Arcela-Ascoli's theorem, since we know that $\eta_n$ is uniformly H\"older continuous. Compactness in $L^p(I_*;W^{4,p}(\omega))$ then follows from Aubin-Lions' Lemma (recall \eqref{conv1} and \eqref{conv2}) and interpolation.
The proof of the compactness of $\bfu_n$ is much more sophisticated.
Fortunately, we can follow \cite[Proposition 3.34]{lengeler2014weak} which is based on the compactness arguments \cite[Section 3.1]{lengeler2014weak}. The only difference is that we have a sequence of forcing terms $\mathbf{f}+\mathcal R^\varrho\Div\mathbb T(M\widehat{\psi}_n)$. But the term coming from the extra stress is bounded in $L^2$ due to \eqref{eq:0506first}  so the argument remains unchanged and we conclude.
Note also that we can extend $\bfu_n$ and $\bfu$ by zero to the whole space and gain
\begin{align}
\label{conv3origneu}\bfu_n&\rightarrow\bfu\quad\text{in}\quad L^2(I_*;L^2_{\Div}(\R^3; \R^3)).
\end{align} 
Finally, we find for all $k,l\in\mathbb N$ that $\|\partial_t^l\nabla^k\mathfrak{r}^\varrho\xi_n\|_{L^\infty(I_*\times\omega)}\leq c(\kappa,k,l)$. Hence, there is a subsequence  (not relabelled) such that
 \begin{align}\label{eq:2410}
\mathfrak{r}^\varrho\xi_n\to \mathfrak{r}^\varrho\xi \quad\text{in}\quad C^2(\overline I_*\times\omega).
\end{align}
The proof of the compactness is complete and the existence of a fixed point follows by Theorem~\ref{lem:fix}.\\
Finally, we add \eqref{energyEstY} and \eqref{energyEstKappa} (with $(\xi,\bv)=(\eta,\bu)$) to obtain the energy inequality \eqref{energyEstnonlinear} which completes the proof of Theorem \ref{thm:nonlinearSystem}. Note that with $\xi=\eta$ we can rewrite
\begin{align*}
\int_0^t \langle K'(\xi),\partial_\sigma\eta\rangle\ds=\int_0^t \langle K'(\eta),\partial_\sigma\eta\rangle\ds=\int_0^t\frac{\dd}{\dd\sigma}K(\eta)\ds
=K(\eta(t))-K(\eta_0).
\end{align*}

\section{The limit $\varrho\rightarrow0$}
\label{sec:main}
We consider a sequence of smooth functions $(\eta_0^\varrho,\eta_1^\varrho,\bfu_0^\varrho)$ such that
\begin{align}\label{KLeta0}
\eta_0^\varrho\rightarrow \eta_0\quad\text{in}\quad W^{2,2}( \omega ),\,\, \eta_1^\varrho\rightarrow \eta_1\quad\text{in}\quad L^{2}( \omega ),\,\
\sqrt{\varrho}\,\eta_0^\varrho\rightarrow 0\quad\text{in}\quad W^{5,2}( \omega ),\,\, \bfu_0^\varrho\rightarrow \bfu_0\quad\text{in}\quad L^{2}( \R^3 ),
\end{align}
as $\varrho\rightarrow0$. Here $\bfu_0$ has been extended by zero to the whole space and $\bfu_0^\varrho$ is smooth and divergence-free approximation which satisfies $\tr_{\mathfrak{r}^\varrho\eta}\bfu_0^\varrho=\eta_1^\varrho\gamma(\eta_0^\varrho)$, cf. \cite[Sec. 3.2]{lengeler2014weak}.
 For a fixed $\varrho>0$, we apply Theorem \ref{thm:nonlinearSystem}
to obtain a sequence of solutions $(\bu^\varrho,\widehat{\psi}^\varrho,\eta^\varrho)$ to the regularized problem \eqref{contEqDCnonlinear}--\eqref{fokkerPlankIintialnonlinear} with data $(\bff, g, \eta^\varrho_{0}, \widehat{\psi}_0, \bu_0, \eta_1^\varrho)$. The forthcoming effort is to pass with $\varrho\rightarrow0$ which will prove Theorem \ref{thm:main}. We split this prove into two subsections. In the first part, we establish uniform a priori estimates. In particular, we prove fractional differentiability of the shell which eventually allows to pass to the limit in the nonlinear term $K(\eta^\varrho)$. Finally, we have to pass to the limit in the nonlinear terms in the momentum equations as well as the Fokker--Planck equations. In the case of fixed domains, this is done by classical compactness tools like the classical Aubin-Lions lemma. While one can still use the equations to gain information on the time-regularity of $\bu^\varrho$ and $\widehat{\psi}^\varrho$, the analysis is significantly more involved.
\subsection{A priori estimate}
\label{sec:6.1}
Using \eqref{energyEstnonlinear} and applying Young's inequality, we obtain 
\begin{equation}
\begin{aligned}
\label{energyvarrho}
&\int_{\Omega_{\mathfrak{r}^\varrho\eta^\varrho(t)}}
 \vert \bu^\varrho(t) \vert^2  \dx
+\mu\int_0^t
 \int_{ \Omega_{\mathfrak{r}\varrho\eta^\varrho(\sigma)}}\vert \nabx \bu^\varrho \vert^2 \dx\ds
 +
\int_{\omega}
 \vert \partial_t \eta^\varrho(t) \vert^2  \dy
+\varrho\int_{\omega}|\nabla_\by^5\eta^\varrho(t)|^2\dy
\\&
+ K(\eta^\varrho(t))
+\Vert \Xi^\varrho(t,\cdot) \Vert^2_{L^\infty(\Omega_{\mathfrak{r}^\varrho\eta^\varrho(t)})}
+
\varepsilon \int_0^t
 \int_{\Omega_{\mathfrak{r}^\varrho\eta^\varrho(\sigma)}}\vert \nabx \Xi^\varrho \vert^2 \dx\ds
+k
\int_{\Omega_{\mathfrak{r}^\varrho\eta^\varrho(t)} \times B} 
M\, \mathcal{F} \big( \widehat{\psi}^\varrho (t)\big)
\dq \dx
\\
  &+
 4k\,\varepsilon \int_0^t
 \int_{\Omega_{\mathfrak{r}^\varrho\eta^\varrho(\sigma)} \times B}
 M\Big\vert \nabx \sqrt{ \widehat{\psi}^\varrho } \Big\vert^2
 \dq \dx\ds
 +
  \frac{kA_0}{\lambda}
 \int_0^t
 \int_{\Omega_{\mathfrak{r}^\varrho\eta^\varrho(\sigma)} \times B}
 M\Big\vert \nabq \sqrt{ \widehat{\psi}^\varrho } \Big\vert^2
 \dq \dx\ds
 \\
&\lesssim \int_{\Omega_{\mathfrak{r}^\varrho\eta^\varrho_0}}
\vert \bu_0 \vert^2  \dx
+
\int_{\omega}
 \vert \eta_1^\varrho \vert^2  \dy
+ K(\eta^\varrho_{0})
+\varrho\int_{\omega}|\nabla_\by^5\eta^\varrho_0|^2\dy
+
\Vert \Xi^\varrho_0  \Vert^2_{L^\infty(\Omega_{\mathfrak{r}^\varrho\eta^\varrho_0})}
\\&
 +k
\int_{\Omega_{\mathfrak{r}^\varrho\eta_0^\varrho} \times B} 
M\, \mathcal{F} \big( \widehat{\psi}_0 \big) 
+
\int_0^t\int_{\Omega_{\mathfrak{r}^\varrho\eta^\varrho(\sigma)}}|\mathbf{f}|^2\dx\ds 
\int_0^t\int_{\omega}|g|^2\dy\ds
\end{aligned}
\end{equation}
for all $t\in I$ where the right-hand side is uniformly bounded due to \eqref{KLeta0} and \eqref{datasetnonlinear}. Passing to a non-relabelled subsequence, we conclude
\begin{align}\label{varrho1}
\eta^\varrho&\rightharpoonup^\ast\eta\quad\text{in}\quad L^\infty(I,W^{2,2}(\omega)),\\
\label{varrho2}\partial_t\eta^\varrho&\rightharpoonup^\ast\partial_t\eta\quad\text{in}\quad L^\infty(I,L^{2}(\omega))),\\
\label{varrho3}\varrho\,\eta^\varrho&\rightarrow 0\quad\text{in}\quad L^\infty(I,W^{5,2}(\omega)),\\
\label{varrho4}\bfu^\varrho&\rightharpoonup^{\ast,\eta}\bfu\quad\text{in}\quad L^\infty(I;L^2(\Omega_{\mathfrak{r}^\varrho\eta^\varrho(t)}; \R^{3})),\\
\label{varrho5}\nabx\bfu^\varrho&\rightharpoonup^\eta \nabx\bfu\quad\text{in}\quad L^2(I;L^2(\Omega_{\mathfrak{r}^\varrho\eta^\varrho(t)}; \R^{3\times3})),\\
\label{varrho5x}\Xi^\varrho&\rightharpoonup^{\ast,\eta}\Xi \quad\text{in}\quad L^\infty \big( I \times \Omega_{\regkap\eta^\varrho(t)} \big) ,\\
\label{varrho5y}\Xi^\varrho&\rightharpoonup^{\eta}\Xi \quad\text{in}\quad L^2 \big( I; W^{1,2} \big(\Omega_{\mathfrak{r}^\varrho\eta^\varrho(t)} \big) \big) ,\\
\label{varrho6}\widehat{\psi}^\varrho&\rightharpoonup^{\ast,\eta}\widehat{\psi} \quad\text{in}\quad L^\infty \big( I; L^1 \big(\Omega_{\mathfrak{r}^\varrho\eta^\varrho(t)} ; L^1_M(B) \big) \big) ,\\
\label{varrho7}\sqrt{\widehat{\psi}^\varrho} &\rightharpoonup^{\eta}  \sqrt{\widehat{\psi}} \quad\text{in}\quad L^2 \big( I; L^2 \big(\Omega_{\mathfrak{r}^\varrho\eta^\varrho(t)} ; W^{1,2}_M(B) \big) \big) ,\\
\label{varrho8}\sqrt{\widehat{\psi}^\varrho} &\rightharpoonup^{\eta}  \sqrt{\widehat{\psi}} \quad\text{in} \quad L^2 \big( I; D^{1,2} \big(\Omega_{\mathfrak{r}^\varrho\eta^\varrho(t)} ; L^2_M(B) \big) \big) ,
\end{align}
for some limit functions $(\bu,\widehat{\psi},\eta)$ and $\Xi=\int_BM\widehat{\psi}\dq$.
From \eqref{varrho5x}--\eqref{varrho8} we obtain in particular
\begin{align}
\label{varrho10}\mathbb T(M\widehat{\psi}^\varrho)&\rightharpoonup\mathbb T(M\widehat{\psi})\quad\text{in}\quad L^2(I \times \Omega_{\mathfrak{r}^\varrho\eta^\varrho(t)}; \R^{3\times3}).
\end{align}
In the following we are going to prove that
\begin{align}\label{eq:0806}
\int_I\|\eta^\varrho\|^2_{W^{2+s,2}(\omega)}\dt
\end{align}
is uniformly bounded for some $s>0$. This will be achieved by using an appropriate test-function in the shell equation. As the shell equation is coupled with the fluid equation, we need a suitable test-function for the fluid equation as well.
As shown in \cite[Sec. 3]{muha2019existence}, for a given $\eta\in L^\infty(I;W^{1,2}( \omega ))$ with $\|\eta\|_{L^\infty(I\times\omega)}<\ell <L$, there are linear operators
\begin{align*}
\mathscr K_\eta:L^1( \omega )\rightarrow\mathbb R,\quad
\Testzeta:\{\xi\in L^1(I;W^{1,1}( \omega )):\,\mathscr K_\eta(\xi)=0\}\rightarrow L^1(I;W^{1,1}_{\Div}(S_{\ell} \cup \Omega)),
\end{align*}
such that the tuple $(\Testzeta(\xi-\mathscr K_\eta(\xi)),\xi-\mathscr K_\eta(\xi))$ satisfies
\begin{align*}
\Testzeta(\xi-\mathscr K_\eta(\xi))&\in L^\infty(I;L^2(\Omega_\eta))\cap L^2(I;W^{1,2}_{\Div}(\Omega_\eta)),\\
\xi-\mathscr K_\eta(\xi)&\in L^\infty(I;W^{2,2}( \omega ))\cap  W^{1,\infty}(I;L^{2}( \omega )),\\
\mathrm{tr}_\eta (\Testzeta&(\xi-\mathscr K_\eta(\xi))=\xi-\mathscr K_\eta(\xi),
\end{align*}
provided we have $\eta\in L^\infty(I;W^{2,2}( \omega ))\cap  W^{1,\infty}(I;L^{2}(\omega))$.
In particular, we have the estimates
\begin{align}\label{musc1}
\|\mathscr{F}_\eta(\xi-\mathscr K_\eta(\xi))\|_{L^q(I;W^{1,p}( S_{\ell} \cup \Omega))}\lesssim \|\xi\|_{L^q(I;W^{1,p}( \omega ))}+\|\xi\nabx \eta\|_{L^q(I;L^{p}( \omega ))},\\
\|\partial_t(\mathscr{F}_\eta(\xi-\mathscr K_\eta(\xi)))\|_{L^q(I;L^{p}( S_{\ell} \cup \Omega))}\lesssim \|\partial_t\xi\|_{L^q(I;L^{p}( \omega ))}+\|\xi\partial_t \eta\|_{L^q(I;L^{p}( \omega ))},\label{musc2}
\end{align}
for any $q\in[1,\infty]$ and $p\in (1,\infty)$ as proved in \cite[Prop. 3.3]{muha2019existence}.
We now use the test-function
\begin{align*}
(\bm{\varphi}^\varrho,\phi^\varrho)=\big(\Testetar(\Delta_{-h}^s\Delta_h^s \eta^\varrho-\mathscr K_{\eta^\varrho}(\Delta_{-h}^s\Delta_h^s \eta^\varrho)),\Delta_{-h}^s\Delta_h^s \eta^\varrho-\mathscr K_{\eta^\varrho}(\Delta_{-h}^s\Delta_h^s \eta^\varrho)\big)
\end{align*}
in equation \eqref{distrho}. Here $\Delta_s^hv(\by)=h^{-s}(v(\by+h\bm{e}_\alpha)-v(\by))$ is the fractional difference quotient in direction $\bm{e}_\alpha$ for $\alpha\in\{1,2\}$. 
We obtain  
\begin{align}
\nonumber
\int_I & \big(\langle K'(\eta^\varrho),\phi^\varrho\rangle+\varrho\langle \mathcal L'(\eta^\varrho),\phi^\varrho\rangle\big)\dy\dt\\
&=\int_I  \int_{\Omega_{\mathfrak{r}^\varrho\eta^\varrho(t)}}\big(  \bu^\varrho \cdot \partial_t  \bm{\varphi} ^\varrho+ \tfrac{1}{2}(\mathcal{R}^\varrho\bu^\varrho \cdot\nabla) \bu^\varrho\cdot \bm{\varphi}^\varrho -\tfrac{1}{2}(\mathcal{R}^\varrho\bu^\varrho \cdot\nabx) \bm{\varphi}^\varrho \cdot \bu^\varrho
 -\mu \nabx\bu^\varrho:\nabx \bm{\varphi}^\varrho  
+\bff\cdot\bm{\varphi}^\varrho \big) \dx\dt\nonumber
\\
&-\int_I  \frac{\mathrm{d}}{\dt}\bigg(\int_{\Omega_{\mathfrak{r}^\varrho\eta^\varrho(t)}}\vu^\varrho  \cdot \bm{\varphi}^\varrho\dx
+\int_{ \omega } \partial_t \eta^\varrho \, \phi^\varrho \dy
\bigg)\dt 
+
\int_I \int_{ \omega } \big(\partial_t \eta^\varrho\, \partial_t\phi^\varrho +g\, \phi^\varrho \big)\dy\dt\label{mom:reg}
\\
&+\int_I \int_{\omega} \tfrac{1}{2}\partial_t \eta^\varrho\, \partial_t\mathfrak r^\varrho\eta^\varrho\,\phi^\varrho\dxt
- \int_I  \int_{\Omega_{\mathfrak{r}^\varrho\eta^\varrho(t)}}\mathbb{T}(M\widehat{\psi}^\varrho) :\nabx \mathcal{R}^\varrho\bm{\varphi}^\varrho \dx\dt\nonumber\\
&=:(I)^\varrho+(II)^\varrho+(III)^\varrho+(IV)^\varrho+(V)^\varrho.\nonumber
\end{align}
Since $\eta^\varrho\in L^\infty(I,W^{2,2}(\omega))$ uniformly, due to \cite[Lemma 4.5]{muha2019existence}, we have
\begin{align*} 
\int_I\|\Delta_h^s \nabla^2\eta^\varrho\|_{L^2( \omega )}^2\dt\lesssim 1+ \int_I \langle K'(\eta^\varrho), \phi^\varrho\rangle\dt\lesssim 1+ \int_I \big(\langle K'(\eta^\varrho)\phi^\rho\rangle+\varrho\langle\mathcal L'(\eta^\varrho), \phi^\varrho\rangle\big)\dt
\end{align*} 
for every $h>0$ and $s\in (0,\frac{1}{2})$
so that our task consists of establishing uniform estimates for the 
terms $(I)^\varrho,\dots,(V)^\varrho$. As in \cite[Sec. 4.2]{muha2019existence}, we obtain the required bounds for $(I)^\varrho,(II)^\varrho$ and $(III)^\varrho$. A key ingredient in these estimates is the observation that 
\begin{align}\label{eq:0806b}
\partial_t\eta^\varrho\in  L^2(I,W^{s,2}( \omega )))
\end{align}
for all $s\in (0,\frac{1}{2})$ by the use of \eqref{varrho5} and the trace theorem, cf. Lemma \ref{lem:2.28}. In fact, we can transfer the regularity from
$\bfu^\varrho$ to $\partial_t\eta^\varrho$ by means of the boundary condition $\tr_{\mathcal R^\varrho\eta^\varrho}\bfu^\varrho=\partial_t\eta^\varrho\gamma(\eta^\varrho)$. Using
\eqref{varrho1} and \eqref{varrho2},
we obtain
\begin{align*}
(IV)^\varrho&\leq \|\partial_t\eta^\varrho\|_{L^\infty(I;L^2(\omega))}\|\partial_t\mathfrak r^\varrho\eta^\varrho\|_{L^\infty(I;L^2(\omega))}\|\phi^\varrho\|_{L^\infty(I;L^\infty(\omega))}\\
&\lesssim \|\partial_t\eta^\varrho\|_{L^\infty(I;L^2(\omega))}^2\|\Delta_{-h}^s\Delta_h^s \eta^\varrho\|_{L^\infty(I;W^{1,p}(\omega))}\\
&\lesssim\|\eta^\varrho\|_{L^\infty(I;W^{1+2s,p}(\omega))}\lesssim\|\eta^\varrho\|_{L^\infty(I;W^{2,2}(\omega))}\leq\,c
\end{align*}
for any $s<\frac{1}{2}$.
Here, we have chosen $p\in(2,\frac{1}{s}]$ such that the Sobolev embeddings $W^{1,p}(\omega)\hookrightarrow L^\infty(\omega)$ and $ W^{2,2}(\omega)\hookrightarrow W^{1+2s,p}(\omega)$ holds.
We are left with the term
$(V)^\varrho$ for the mesoscopic coupling. We obtain 
\begin{align*}
(V)^\varrho&\leq \|\mathbb{T}(M\widehat{\psi}^\varrho)\|_{L^2(I\times \Omega_{\mathfrak{r}^\varrho\eta^\varrho(t)})}\|\nabx \mathcal{R}^\varrho\bm{\varphi}^\varrho\|_{L^{2}(I\times \Omega_{\mathfrak{r}^\varrho\eta^\varrho(t)})}
\\&
\lesssim \|\Delta_{-h}^s\Delta_h^s \eta^\varrho\|_{L^{2}(I;W^{1,2}( \omega ))}+\| (\Delta_{-h}^s\Delta_h^s \eta^\varrho)\nabx\eta^\varrho\|_{L^{2}(I;L^{2}( \omega ))}
\\
&\lesssim \|\eta^\varrho\|_{L^{\infty}(I;W^{1+2s,2}( \omega ))}+\|\eta^\varrho\|_{L^\infty(I;W^{2s,4}( \omega )}\|\eta^\varrho\|_{L^{\infty}(I;W^{1,4}( \omega ))}
\end{align*}
using \eqref{varrho10}  and \eqref{musc1}.  Using \eqref{varrho1}, the right-hand side is clearly bounded for all $s<\frac{1}{2}$.
Combining all the previous estimates and passing with $h\rightarrow0$ yields \eqref{eq:0806}.

\subsection{Compactness}
\label{subsec:comp}
To show compactness of the velocity, we are going to apply the method from \cite{lengeler2014weak}.
We aim to prove
\begin{align}\label{eq:convrhou}\begin{aligned}
\int_{I}\int_{\Omega_{\mathfrak{r}^\varrho\eta^\varrho(t)}}&|\bfu^\varrho|^2\dxt+\int_{I}\int_\omega|\partial_t\eta^\varrho|^2\dy\dt\\
&\longrightarrow \int_{I}\int_{\Omega_{\eta(t)}}|\bfu|^2\dxt+\int_{I}\int_\omega|\partial_t\eta|^2\dy\dt,
\end{aligned}
\end{align}
which will be a consequence of 
\begin{align}\label{eq:312}
\begin{aligned}
\int_{I}\int_{\Omega_{\mathfrak{r}^\varrho\eta^\varrho(t)}}&\bfu_n\cdot\mathscr F_{\mathfrak{r}^\varrho\eta^\varrho}(\partial_t\eta^\varrho)\dxt+\int_{I}\int_\omega|\partial_t\eta^\varrho|^2\dy\dt\\
&\longrightarrow \int_{I}\int_{\Omega_{\eta(t)}}\bfu\cdot\mathscr F_{\eta}(\partial_t\eta)\dxt+\int_{I}\int_\omega|\partial_t\eta|^2\dy\dt
\end{aligned}
\end{align}
and 
\begin{align}\label{eq:313}
\begin{aligned}
\int_{I}\int_{\Omega_{\mathfrak{r}^\varrho\eta^\varrho(t)}}&\bfu^\varrho\cdot(\bfu^\varrho-\mathscr F_{\mathfrak{r}^\varrho\eta^\varrho}(\partial_t\eta^\varrho))\dxt
\longrightarrow \int_{I}\int_{\Omega_{\eta(t)}}\bfu\cdot(\bfu-\mathscr F_\eta(\partial_t\eta_n))\dxt.
\end{aligned}
\end{align}
In order to prove \eqref{eq:312}, we take the test-function $(b,\mathscr F_{\mathcal R^\varrho\eta^\varrho}(b-\mathscr K_{\mathfrak{r}^\varrho\eta^\varrho}(b)))$ with $b\in W^{5,2}(\omega)$ and $\|b\|_{W^{5,2}(\omega)}\leq 1$. 
It  satisfies for all $q<\infty$, 
\begin{align}\label{eq:1601a}
\begin{aligned}
\|\partial_t(\mathscr F_{\mathfrak{r}^\varrho\eta^\varrho}(b-\mathscr K_{\mathfrak{r}^\varrho\eta^\varrho}(b))\|_{L^\infty(I;L^{2}(\Omega_{\mathfrak{r}^\varrho\eta^\varrho(t)}))}&\leq\,c,\\
\|\mathscr F_{\mathfrak{r}^\varrho\eta^\varrho}(b-\mathscr K_{\mathfrak{r}^\varrho\eta^\varrho}(b))\|_{L^\infty(I;W^{1,q}(\Omega_{\mathfrak{r}^\varrho\eta^\varrho(t)}))}&\leq \,c,
\end{aligned}
\end{align}
due to \eqref{musc1} and \eqref{musc2} in combination with \eqref{varrho1} and \eqref{varrho2}. We consider
the functions
\begin{align*}
c_{b,\varrho}(t)&=\int_{\Omega_{\mathfrak{r}^\varrho\eta^\varrho(t)}}\bfu^\varrho\cdot\mathscr F_{\mathfrak{r}^\varrho\eta^\varrho}(b-\mathscr K_{\mathfrak{r}^\varrho\eta^\varrho}(b))\dx+\int_\omega\partial_t\eta^\varrho\,b\dy,\\
c_{b}(t)&=\int_{\Omega_{\eta(t)}}\bfu\cdot\mathscr F_{\eta}(b-\mathscr K_{\eta}(b))\dx+\int_\omega\partial_t\eta\,b\dy,
\end{align*}
and aim to show
\begin{align}\label{eq:1601}
\big\|c_{b,\varrho}\big\|_{C^{0,1/\chi'}(\overline I)}\lesssim 1
\end{align}
for some $\chi>1$ uniformly in $b$ and $\varrho$. This follows by inserting $\mathbb I_{(0,t)}\mathscr F_{\mathfrak{r}^\varrho\eta^\varrho}(b-\mathscr K_{\mathfrak{r}^\varrho\eta^\varrho}(b))$
as a test-function
in \eqref{distrho}
 provided all terms in
in \eqref{distrho} have integrability $\chi$. This is indeed the case as a consequence of
\eqref{varrho1}--\eqref{varrho10}.
Applying Arcela-Ascoli's theorem we can assume that $c_{b,\varrho}$ converges (for a fixed $b$) uniformly in $\overline I$, at least for a subsequence.
Together with \eqref{eq:1601},
this implies uniform convergence of the function
\begin{align*}
t\mapsto\sup_{\|b\|_{W^{5,2}(\omega)}\leq 1}\big(c_{b,\varrho_n}(t)-c_{\varphi}(t)),
\end{align*}
for a subsequence $(\varrho_n)_{n\in\mathbb{N}}$,
cf. \cite[pages 229, 230]{lengeler2014weak} for more details.
 In particular, we obtain
\begin{align}\label{eq:2101}
\int_{I}\sup_{\|b\|_{W^{5,2}(\omega)}\leq 1}\big(c_{b,\varrho_n}(t)-c_b(t)\big)\dt\rightarrow0
\end{align}
as $n\rightarrow\infty$.
By interpolation, we have
\begin{align*}
\int_{I}\sup_{\|b\|_{L^{2}(\omega)}\leq 1}\big(c_{b,\varrho_n}(t)-c_b(t)\big)\dt&\leq\varepsilon\int_I\big(\|\bfu^{\varrho_n}\|^2_{W^{1,2}(\mathfrak{r}^{\varrho_n}\eta^{\varrho_n})}+\|\bfu\|_{W^{1,2}(\mathfrak{r}^{\varrho_n}\eta^{\varrho_n})}\big)\\
&+\varepsilon\int_I\big(\|\partial_t\eta^{\varrho_n}\|_{W^{s,2}(\omega)}+\|\partial_t\eta\|_{W^{s,2}(\omega)}\big)\\
&+c_\varepsilon \int_{I}\sup_{\|b\|_{W^{5,2}(\omega)}\leq 1}\big(c_{b,\varrho_n}(t)-c_b(t)\big)\dt
\end{align*}
for all $\varepsilon>0$ and all $s\in(0,1/2)$. Consequently, we obtain
\begin{align}\label{eq:2101B}
\int_{I}\sup_{\|b\|_{L^{2}(\omega)}\leq 1}\big(c_{b,\varrho_n}(t)-c_b(t)\big)\dt\rightarrow0
\end{align}
as $n\rightarrow\infty$
on account of \eqref{varrho5}, \eqref{eq:0806} and \eqref{eq:2101}. Finally, by \eqref{varrho2} and  \eqref{eq:2101B} we have
\begin{align*}
\int_I \big(c_{\partial_t\eta^{\varrho_n},{\varrho_n}}(t)-c_{\partial_t\eta^{\varrho_n}}(t)\big)\dt
&\rightarrow0
\end{align*}
as $n\rightarrow \infty$
which implies \eqref{eq:312}. For the convergence in \eqref{eq:313},
one uses the fact that the function $\bfu^{\varrho_n}-\mathscr F_{\mathfrak{r}^{\varrho_n}\eta^{\varrho_n}}(\partial_t\eta^{\varrho_n})$ is zero on $\partial\Omega_{\mathfrak{r}^{\varrho_n}\eta^{\varrho_n}}$ by construction. Hence \eqref{eq:313} is not affected by the shell equation
and can be proved as in \cite[pages 213, 232]{lengeler2014weak}.
We conclude that \eqref{eq:convrhou} holds and obtain
\begin{align*}
\eta^{\varrho_n}&\rightarrow\eta\quad\text{in}\quad L^2(I,W^{2,2}( \omega )),\\
\bu^{\varrho_n}&\rightarrow^{\eta}\bfu\quad\text{in}\quad L^2(I;L^2(\Omega_{\mathfrak{r}^{\varrho_n}\eta^{\varrho_n}(t)};\R^3)),
\end{align*}
 by
convexity of the $L^2$-norm.
This is enough to pass to the limit in the nonlinearities of the fluid-structure system. Note that
with the above and \eqref{eq:0806} we also obtain compactness in $L^2(I,W^{2,p}( \omega ))$ for some $p>2$ and recall from Section \ref{sec:koiterEnergy} that the energy $K$ is continuous on $W^{2,p}( \omega )$.\\
In order to complete the proof of Theorem \ref{thm:main}, we have to pass to the limit 
in the term $M\nabx\bu^{\varrho_n}\widehat{\psi}^{\varrho_n}$ appearing in the Fokker--Planck equation.
For this purpose we have to prove compactness of $\widehat{\psi}^{\varrho_n}$.
For this, we first note that \eqref{varrho1} and \eqref{varrho2} imply
\begin{align}
\label{calpha}
\eta^{\varrho_n}\rightarrow\eta\quad\text{in}\quad C^\alpha(\overline{I}\times \omega)
\end{align}
as $n\rightarrow \infty$ for some $\alpha\in(0,1)$. Therefore, by using
\begin{align*}
\big\Vert \mathfrak{r}^{\varrho_n}\eta^{\varrho_n} -\eta
 \big\Vert_{ C^\alpha(\overline{I}\times \omega)} 
 \leq
 \big\Vert \mathfrak{r}^{\varrho_n}(\eta^{\varrho_n} -\eta )\big\Vert_{ C^\alpha(\overline{I}\times \omega)} 
 + \big\Vert \mathfrak{r}^{\varrho_n}\eta -\eta \big\Vert_{ C^\alpha(\overline{I}\times \omega)},
\end{align*}
we also get that
\begin{align}
\label{calpha1}
\mathfrak{r}^{\varrho_n}\eta^{\varrho_n}\rightarrow\eta\quad\text{in}\quad C^\alpha(\overline{I}\times \omega)
\end{align}
as $n\rightarrow \infty$. Next, similarly to \eqref{logEPlus1}, we obtain from \eqref{energyvarrho} the bound
\begin{align}
\label{logEPlus3}
\sup_{t\in I}\int_{\R^3 \times B}& \chi_{\Omega_{\mathfrak{r}^{\varrho_n}\eta^{\varrho_n}(t)}} M\widehat{\psi}^{\varrho_n} \ln(\mathrm{e}+ \widehat{\psi}^{\varrho_n})\dq\dx
\lesssim 1
\end{align}
which holds uniformly in $n\in \mathbb{N}$ and thus, $\widehat{\psi}^{\varrho_n}$ is equi-integrable. 
Similarly to \eqref{3101x}, we obtain
\begin{align}\label{3101z1}
\chi_{\Omega_{\mathfrak{r}^{\varrho_n}\eta^{\varrho_n}(t)}}\widehat{\psi}^{\varrho_n} \rightarrow  \chi_{\Omega_{\eta(t)}}\psi \quad \text{a.e. in }\quad I\times\R^3\times B.
\end{align}
for a possible subsequence.
If we now combine \eqref{3101z1} with \eqref{logEPlus3}, then we can conclude from Vitali's convergence theorem that
\begin{align*}
\widehat{\psi}^{\varrho_n} \rightarrow^\eta  \psi \quad \text{ in }\quad L^1\big(I \times \Omega_{\mathfrak{r}^{\varrho_n}\eta^{\varrho_n}(t)};L^1_M( B)\big)
\end{align*}
and because of \eqref{varrho6}, we obtain by interpolation,
\begin{align}
\label{psimq1}
\widehat{\psi}^{\varrho_n} \rightarrow^\eta  \psi  \quad \text{ in }\quad L^q\big(I \times \Omega_{\mathfrak{r}{\varrho_n}\eta^{\varrho_n}(t)} ;L^1_M( B)\big)\quad \text{for all} \quad q\in[1,\infty)
\end{align}
and hence 
\begin{align*}
\nabx\bu^{\varrho_n}\widehat{\psi}^{\varrho_n} \rightharpoonup^\eta \nabx \bu\,\widehat{\psi}\quad\text{in}\quad L^1\big(I \times \Omega_{\mathfrak{r}^{\varrho_n}\eta^{\varrho_n}(t)};L^1_M( B;\R^{3\times 3})\big)
\end{align*}
due to \eqref{varrho5}. The proof of Theorem \ref{thm:main} is complete.


\end{document}